\newtheorem{propo}{Proposition}[section]
\newtheorem{corol}[propo]{Corollary}
\newtheorem{theor}[propo]{Theorem}
\newtheorem{lemma}[propo]{Lemma}
\theoremstyle{definition}
\newtheorem{defin}[propo]{Definition}
\theoremstyle{remark}
\newtheorem{remar}[propo]{Remark}
\numberwithin{equation}{section}
\newcommand{\ad }{\mathrm{ad}}
\newcommand{\Ad }[2]{#1 \triangleright #2}
\newcommand{\al }{\alpha }
\newcommand{\Aut }{\mathrm{Aut}}
\newcommand{\cA }{\mathcal{A}}
\newcommand{\cC }{\mathcal{C}}
\newcommand{\cc }[1]{\mathcal{O}_{#1}}
\newcommand{\cK }{\mathcal{K}}
\newcommand{\Cm }{A}
\newcommand{\cm }{a}
\newcommand{\cM }{\mathcal{M}}
\newcommand{\co }{\mathrm{co}}
\newcommand{\coa }{\delta }
\newcommand{\copr }{\varDelta }
\newcommand{\cou }{\varepsilon }
\newcommand{\End }{\mathrm{End}}
\newcommand{\fd }{finite-dimensional}
\newcommand{\fie }{\Bbbk }
\newcommand{\fg }{\mathrm{fg}}
\newcommand{\gbhH }{ {}_H^H\mathcal{H}}
\newcommand{\gr }{\mathrm{gr}}
\newcommand{\Hom }{\mathrm{Hom}}
\newcommand{\Ib }{\mathbb{I}}
\newcommand{\icyH }{\widehat{^H_H\mathcal{YD}}}
\newcommand{\id }{\mathrm{id}}
\newcommand{\img }{\mathrm{Im\,}}
\newcommand{\io }[2]{\iota _{#1}^{#2}} % embedding of #1 into #2
\newcommand{\lact }{\cdot }
\newcommand{\mult }{\mathrm{mult}}
\newcommand{\NA }{\mathfrak{B}}
\newcommand{\ndN }{\mathbb{N}}
\newcommand{\ndZ }{\mathbb{Z}}
\newcommand{\NI }{\mathfrak{I}}
\newcommand{\Ob }{\mathrm{Ob}}
\newcommand{\ot }{\otimes }
\newcommand{\p }[1]{\langle #1 \rangle }
\newcommand{\pr }{\mathrm{pr}}
\newcommand{\re }{^\mathrm{re}}
\newcommand{\rfl }{r}
\newcommand{\rsys }{\boldsymbol{\Delta }}
\newcommand{\rsyst }{\widetilde{\boldsymbol{\Delta }}}
\newcommand{\Rwg }{\mathcal{R}}
\newcommand{\udeg }{\underline{\mathrm{deg}}}
\newcommand{\Wg }{\mathcal{W}}
\newcommand{\ydG }{ {}^G_G\mathcal{YD}}
\newcommand{\ydH }{ {}^H_H\mathcal{YD}}
\newcommand{\ydHf }{ \ydH ^{\mathrm{fd}}}
\newcommand{\ydHp }{ {}^{H'}_{H'}\mathcal{YD}}
\newcommand{\ydHtf }{ \ydH ^{\ndZ ^\theta }}
\newcommand{\ydHtkf }{ \ydH ^{\ndZ ^{\theta +\kappa }}}%_\mathrm{fd}}
\renewcommand{\_}[1]{_{(#1)}}
\renewcommand{\^}[1]{^{(#1)}}
\title[Root systems and Weyl groupoids]{Root
systems and Weyl groupoids for Nichols algebras}
\author{I. Heckenberger}
\address{Istv\'an Heckenberger,
Mathematisches Institut,
Universit\"at M\"unchen,
Theresienstr. 39,
D-80333 Munich, Germany}
\email{i.heckenberger@googlemail.com}
\author{H.-J. Schneider}
\address{
Hans-J\"urgen Schneider,
Mathematisches Institut,
Universit\"at M\"unchen,
Theresienstr. 39,
D-80333 Munich, Germany}
\email{Hans-Juergen.Schneider@mathematik.uni-muenchen.de}
\thanks{The work of I.H. was supported by DFG within a Heisenberg fellowship
at the University of Munich}
\begin{document}

\begin{abstract}
  Motivated by work of Kac and Lusztig, we define a root system and a Weyl
  groupoid for a large class of semisimple Yetter-Drinfeld modules over an
  arbitrary Hopf algebra. The obtained combinatorial structure fits perfectly
  into an
  existing framework of generalized root systems associated to a family of
  Cartan matrices, and provides novel insight into Nichols algebras. 
  We demonstrate the power of our construction
  with new results on Nichols algebras over finite non-abelian simple groups
  and symmetric groups.
\end{abstract}

\keywords{Hopf algebra, quantum group, root system, Weyl group}
\subjclass[2000]{17B37,16W30;20F55}

\maketitle

\section*{Introduction}

In \cite{b-Kac90}, Kac defines the %Kac-Moody
Lie algebra $\mathfrak{g}(A)$ associated to a
symmetrizable Cartan matrix $A = (a_{ij})_{1 \leq i,j \leq n}$  by generators
and relations, where the relations are not given explicitly, but they are
determined by dividing out an ideal with a certain universal property.
Similarly, Lusztig \cite{b-Lusztig93} defines the braided algebra
$\mathbf{f}$, that is the plus part of the quantum deformation of the the
universal enveloping algebra of $\mathfrak{g}(A)$, by dividing out from the
free algebra an ideal defined by a universal property (the radical of a
bilinear form). After a considerable amount of work the relations turn out
to be the Serre relations.

Using the language of braided vector spaces and braided categories,
Lusztig's definition can be formulated as follows. Let $V$ be a vector space with basis $x_1, \dots,x_n$ and define a braiding
$$c : V \otimes V \to V \otimes V,\qquad
c(x_i \otimes x_j) = q^{d_ia_{ij}} x_j \otimes x_i \text{ for all $i,j$},$$
where $q$ is the deformation parameter and $(d_ia_{ij})$ is the symmetrized
Cartan matrix. The braiding $c$ has a categorical explanation. Let $G$ be the
free abelian group with basis $K_1,\dots,K_n$ and let $\ydG$ be the category
of Yetter-Drinfeld modules over $G$, that is, of $G$-graded vector spaces
which are $G$-modules where each $G$-homogeneous part is stable under the
action of $G$. Then $V \in \ydG$, where each $x_i$ has degree $K_i$ and where
the action is given by $K_i \cdot x_j = q^{d_ia_{ij}}x_j$ for all $i,j$, and $V$
is a braided vector space as an object of the braided category $\ydG$. Then
$$\mathbf{f} = T(V)/I_V,$$
where the free algebra $T(V)$ is a braided Hopf algebra such that the elements
of $V$ are primitive, and $I_V$ is the largest coideal spanned  by elements of
$\ndN $-degree $\geq 2$.
In later terminology, $\NA (V) = T(V)/I_V$ is called the
\textit{Nichols algebra of} $V$.
Note that $V= \fie{}x_1 \oplus \cdots \oplus \fie{}x_n$ is a direct sum of
one-dimensional Yetter-Drinfeld modules $\fie{} x_i$, and the Nichols algebra
of the irreducible pieces $\fie{}x_i$ is easy to compute as a (truncated)
commutatative polynomial ring.

Yetter-Drinfeld modules can be defined over any Hopf algebra $H$ over a field
$\fie$ with bijective antipode instead of the group algebra of $G$. For
details we refer to Sect.~\ref{sec:prelims}. It is a
fundamental problem in Hopf algebra theory to understand $\mathfrak{B}(V)$ for
arbitrary objects $V$ in the category $\ydH$ of Yetter-Drinfeld modules over
$H$ (see the survey article \cite{inp-AndrSchn02} of Andruskiewitsch and the
second author).
In this paper we study the Nichols algebra 
$$\NA (M_1\oplus \cdots \oplus M_\theta ),\qquad
\theta \geq 1,\,\,  M_1,\dots,M_{\theta} \in \ydH$$ 
of the direct sum of finitely many \fd{} irreducible Yetter-Drinfeld modules
$M_1,\dots,M_{\theta}$.
We assume that finite tensor powers of
$M_1\oplus \cdots \oplus M_\theta $ are semisimple in the category $\ydH $
of Yetter-Drinfeld modules. Moreover, the braided adjoint action should
satisfy a weak finiteness condition (see Def.~\ref{de:cm}).
By Prop.~\ref{pr:Tn}, the finiteness condition
can be checked directly using
the braiding of $M_1\oplus \cdots \oplus M_\theta $.
It holds for Lusztig's algebra $\mathbf{f}$.
Our main achievement (Thms.~\ref{th:Cscheme}, \ref{th:rsysC}) is to
associate to $M_1,\dots ,M_\theta $ a
generalized root system $\Rwg $
in the sense of the paper \cite{a-HeckYam08} by Yamane and the first author.
This is done  similarly to the definition
of the root system of a Kac-Moody Lie algebra $\mathfrak{g}(A)$ in
\cite{b-Kac90}. Instead of one Cartan matrix $A$ we have to deal with a family
of Cartan matrices and with the Weyl proupoid of $\Rwg $ instead of the Weyl
group. In contrast to Lusztig's approach, the Cartan matrices are not given a
priori, but they are obtained from the finiteness condition on the braided
adjoint action.

The effect for the Nichols algebra is tremendous:
Its dimension and (multivariate) Hilbert series are controlled by the set of
roots, the Nichols algebras of $M_1,\dots,M_\theta $,
and their images under reflections.
The latter are much easier to calculate than the Nichols algebra itself. 
As for Kac-Moody Lie algebras, the situation is best if all roots are real.
This is the case when the Weyl groupoid is finite, in particular when the
Nichols algebra is \fd{}. Then we show in Thm.~\ref{th:maingeneral}
the following striking result on the braided adjoint action:
For all $i\not=j$ and $m\ge 1$, the Yetter-Drinfeld module $(\ad
_cM_i)^m(M_j)$ is zero or irreducible.

To define the root system $\Rwg $,
we prove in Thm.~\ref{th:BVirred}
one of the main results in this paper:
There exists a totally ordered index set $(L,\le)$ and a family 
$(W_l)_{l\in L}$ of \fd{} irreducible $\ndN _0^\theta $-graded objects in
$\ydH $
such that
\begin{align*}
  \NA (M_1\oplus \cdots \oplus M_\theta )\simeq
  \ot _{l\in L}\NA (W_l)
%  \label{eq:BMdec}
\end{align*}
as $\ndN _0^\theta $-graded objects in $\ydH $, where $\deg N_i=\al _i$
for $1\le i\le \theta $, and $\{\al _1,\dots,\al _\theta \}$ is the standard
basis of $\ndZ ^\theta $. Moreover, the Yetter-Drinfeld modules $W_l$ and
their degrees in $\ndZ ^\theta $ are uniquely determined.
The main ingredients of the proof are a generalization by Gra{\~n}a and the
first author \cite{a-HeckGran07} of Kharchenko's result
on PBW-bases of braided Hopf algebras of diagonal type \cite{a-Khar99}, and
Thm.~\ref{th:RisNA}, which says that certain braided Hopf algebras are
isomorphic as a Yetter-Drinfeld module to a Nichols algebra.

The second key part of the construction of our root system is the definition of
the reflections and the proof of their basic properties.
This is the main result of the paper \cite{p-AHS08}
of Andruskiewitsch and the authors.

Once the existence of the root system is established, we can use results on
abstract generalized root systems recalled in
Sect.~\ref{sec:Wgrs}.
For recent work in this area
we refer to \cite{a-HeckYam08} and the paper \cite{p-CH08} by Cuntz and the
first author.

Nichols algebras appear naturally inside of the associated graded Hopf algebra
of pointed Hopf algebras \cite{a-AndrSchn98}, \cite{inp-AndrSchn02}.
Recall that a Hopf algebra is pointed if its simple
comodules are one-dimensional. In particular, Hopf algebras generated by
skew-primitive and group-like elements (as the quantum groups
$U_q(\mathfrak{g}(A)$) are pointed.
{}From our theory,
we expect a deep impact on the further analysis of pointed Hopf algebras.

Our root system generalizes the construction of the first author
\cite{a-Heck06a} in the case of diagonal braidings, where $H$ is the group
algebra of an abelian group over a field of
characteristic zero. In this setting, \fd{} Nichols algebras have been
classified in \cite{p-Heck06b}. These results allowed to complete the
classification of a large class of \fd{}
pointed Hopf algebras with abelian group of group-like
elements
\cite{a-AndrSchn05p}.

The classification of \fd{} pointed Hopf algebras with non-abelian group of
group-like elements is not known, but see \cite{a-AndrZhang07}, 
\cite{p-AndrFant08} and the references therein.
In the few examples, the group seems to be close
to be abelian. This observation is supported by our results in
Sect.~\ref{sec:app}, where the field is assumed to be algebraically closed of
characteristic zero. In particular,
in Cors.~\ref{co:sgrp}, \ref{co:Sn}
we show that the Nichols algebra of a non-simple Yetter-Drinfeld
module over any non-abelian simple group or over the symmetric group
$\mathbb{S}_n$, $n\ge 3$, is infinite dimensional.
For arbitrary finite groups $G$ we prove in Thm.~\ref{th:stst} the following
necessary condition for finiteness of the dimension of the Nichols algebra:
If $i\not=j$ and if $s$ respectively $t$ are the degrees of nonzero
homogeneous elements in $M_i$ respectively $M_j$, then
$$(st)^2=(ts)^2 \quad \text{in $G$.}$$
We believe that these rather immediate consequences
of the existence of
the root system form just the tip of the iceberg.

We would like to express our thanks
to G. Malle for providing us with information on commuting conjugacy
classes of finite groups.

\section{Preliminaries}
\label{sec:prelims}

Let $\fie $ be a field and let $H$ be a Hopf algebra over $\fie $ with
bijective antipode $S$.
Recall that a (left) Yetter-Drinfeld module over $H$ \cite[\S
10.6]{b-Montg93}, \cite{inp-AndrSchn02}
is an $H$-module $V$
equipped with a left comodule structure $\delta :V\to H\ot V$, $v\mapsto
v\_{-1}\ot v\_0$, such that
$\delta (h\cdot v)=h\_1 v\_{-1}S(h\_3)\ot h\_2\cdot v\_0$
for all $h\in H$, $v\in V$.

Let $\ydH $ and $\ydHf $ denote the
category of Yetter-Drinfeld modules and \fd{} Yetter-Drinfeld modules over
$H$, respectively. The categories $\ydH $ and $\ydHf $ are braided with
braiding 
$$c:X\ot Y\to Y\ot X,\quad c(x\ot y)=x\_{-1}\cdot y\ot x\_0$$
for all
$X,Y\in \ydH $, $x\in X$, $y\in Y$. Braided bialgebras and Hopf algebras
in this paper
are always bialgebras and Hopf algebras in the braided category $\ydH $.

If $B\in \ydH $ is a braided Hopf algebra, $x,y\in B$, and $x$ is a primitive
element, then let $(\ad _c x)(y)=xy-(x\_{-1}\cdot y)x\_0$.

%In this paper we will usually consider objects
%$V\in \ydH $ which satisfy one of the following semisimplicity assumptions:
%\begin{itemize}
%  \item[($*$)] All tensor powers $V^{\ot m}$ of $V$, where $m\in \ndN $, are
%    direct sums of irreducible objects in $\ydH $.
%  \item[($**$)] All tensor powers $V^{\ot m}$ of $V$, where $m\in \ndN $, are
%    direct sums of irreducible objects in $\ydHf $.
%\end{itemize}
%Note that if $V\in \ydH $ satisfies ($*$), then $V$ itself is a direct
%sum of irreducible objects in $\ydH $, and for each $m\in \ndN $ the
%Yetter-Drinfeld submodules of
%$\oplus _{n=1}^m V^{\ot n}$ satisfy ($*$).
%The analogous statement holds also for $V\in \ydHf $ satisfying ($**$).

Recall that a braided bialgebra is \textit{connected}, if its coradical is
$\fie 1$.
Let $\gbhH $ denote the category of connected $\ndN _0$-graded braided Hopf
algebras in $\ydH $, which are generated as an algebra by elements of degree
$1$. The morphisms of $\gbhH $ should be the maps of $\ndN _0$-graded
braided Hopf algebras. For any $R\in \gbhH $ and $n\in \ndN _0$ we write
$R(n)$ for the homogeneous component of $R$ of degree $n$.
Let $P(R)$ be the set of primitive elements of $R\in \gbhH $.
Let $\gbhH ^\fg $ be the full subcategory of $\gbhH $ consisting
of those braided Hopf algebras $R$, for which $\dim R(1)<\infty $.

\begin{defin}
  Let $R\in \gbhH $. The maximal coideal of $R$
  contained in $(R^+)^2=\oplus _{n\ge 2}R(n)$ is denoted by $\NI _R$.
  Let $\pi _R:R\to R/\NI _R$ denote the canonical projection.

  If $P(R)=R(1)$, then $R$ is called the \textit{Nichols algebra} of $R(1)$
  \cite{inp-AndrSchn02}. Then we write $R=\NA (V)$, where $V=R(1)\in \ydH $.
\end{defin}

Let $R\in \gbhH $. Then $\NI _R$ is an
$\ndN _0$-graded Hopf ideal of $R$, and
\begin{align}\label{eq:NAR}
  R/\NI _R\simeq \NA (R(1)).
\end{align}

Let $V\in \ydH $.  The tensor algebra $T(V)$ is a braided Hopf
algebra in $\ydH $ such that $P(T(V))=V$.
Another description of the ideal $\NI _{T(V)}$ is as the sum of the kernels
of the quantum symmetrizer $S_n\in \End (V^{\ot n})$, $n\ge 2$,
introduced by Woronowicz
\cite{a-Woro2}, see \cite{a-Schauen96}, \cite[10.4.13]{b-Majid1}.
Explicitly, $S_n$ can be defined with help of the quantum shuffle maps
$S_{k,1}\in \End (V^{\otimes k+1})$, $k\ge 1$,
as follows.
We write $c_{i,i+1}$, if we apply the braiding to the $i$-th and $i+1$-st
components of a tensor product of Yetter-Drinfeld modules. Then
\begin{align}
  \label{eq:shuffle}
  S_{n-1,1}=&\sum _{k=1}^n c_{n-1,n}c_{n-2,n-1}\cdots c_{k,k+1},\\
  \label{eq:Sn}
%  S_n=&\prod _{k=1}^{n-1} (S_{k,1}\ot \id ^{\ot n-k-1}).\\
  S_n=&(S_{1,1}\ot \id )(S_{2,1}\ot \id )\cdots
  (S_{n-2,1} \ot \id )S_{n-1,1}.
\end{align}

For any $\ndZ ^n$-graded vector space $X$, where $n\in \ndN $, let
$X_\gamma $ denote the homogeneous component of degree
$\gamma \in \ndZ ^\theta $ of $X$.

%Clearly, $\NA (R(1))\in \gbhH $.
%The following description of $\NI _R$ is standard, see
%e.\,g.\ \cite[Lemma\,2.1]{inp-MilSchn00}.
%Let $\NI _0(R)=0$, and for all
%$i\in \ndN _0$ define resursively
%\begin{align*}
%  \NI _{i+1}(R)=R\big(\pi _i^{-1}\big(P(R/\NI _i(R))\big)\cap (R^+)^2\big)R,
%\end{align*}
%where $\pi _i:R\to R/\NI _i(R)$ is the canonical
%map. Then $\NI _i(R)$ is an $\ndN _0$-graded braided Hopf ideal for all
%$i\in \ndN _0$, and $\cup _{n\in \ndN }\NI _i(R)=\NI _R$.
%Further, the
%construction also says that
%\begin{align*}
%  \NI _m(R/\NI _n(R))=\NI _{m+n}(R)/\NI _n(R)
%\end{align*}
%for all $m,n\in \ndN _0$.

\section{Approximations of Nichols algebras}

In this section we define and study for each $i\in \ndN \cup \{\infty \}$ a
functor $F_i: \gbhH \to \gbhH $ and a natural transformation $\lambda _i$ from
$\id $ to $F_i$. The functor $F_i$ converts primitive elements of degree
at most $i$ to elements of degree $1$, and $\lambda _i$ maps
primitive elements of degree $2,3,\dots ,i$ to zero. 

\begin{defin}
  Let $R=\oplus _{n=0}^\infty R(n)\in \gbhH $ and $i\in \ndN \cup \{\infty \}$.
  Let
\begin{gather}
%  U'_i(R)=\oplus _{n=2}^i R(n)\cap P(R),\qquad
  P_i(R)=\oplus _{n=1}^i R(n)\cap P(R),
  \label{eq:Pi}\\
  R_{-1}=0,\quad R_0=\fie , \quad
  R_k=(\fie \oplus P_i(R))^k\subset R \quad \text{for all $k\in \ndN $,}
  \label{eq:Rk}\\
  F_i(R)=\oplus _{k=0}^\infty R_k/R_{k-1}.
  \label{eq:Fi}
\end{gather}
Let $\lambda _i:R\to F_i(R)$ be the linear map defined by
\begin{align}
  \lambda _i :R(k)\ni x\mapsto x+R_{k-1}\in R_k/R_{k-1}\quad
  \text{for all $k\in \ndN _0$.}
  \label{eq:lambda}
\end{align}
For later use we define
\begin{align}
P'_i(R)=\oplus _{n=2}^i R(n)\cap P(R).
\end{align}
\end{defin}

\begin{remar}\label{re:Fi}
  Let $R=\oplus _{n=0}^\infty R(n)\in \gbhH $ and $i\in \ndN \cup \{\infty \}$.
  Then $R_k/R_{k-1}\in \ydH $ for all $k\in \ndN _0$, and $F_i(R)\in \gbhH $
  with algebra and coalgebra structure induced by those of $R$,
  and $\ndN _0$-grading given by the decomposition in Eq.~\eqref{eq:Fi}.
  If $i\in \ndN $ and $R\in \gbhH ^\fg $, then
  $\dim R(k)<\infty $ for all $k\in \ndN $, since $R(k)=R(1)^k$.
  Therefore in this case $P_i(R)$ is \fd , and hence $F_i(R)\in \gbhH ^\fg $.

  Let $R,S\in \gbhH $ and $f:R\to S$ a morphism.
  Then $f$ induces a morphism $F_i(f):F_i(R)\to F_i(S)$ in $\gbhH $.
  Thus $F_i$ is a covariant functor.

The map $\lambda _i:R\to F_i(R)$ is
well-defined, since $R(k)=R(1)^k$ and $R(1)\subset P_i(R)$,
%.
%Moreover, $\lambda _i$
%is $\ndN _0$-graded and compatible with the algebra and
%coalgebra structures of $R$ and $F_i(R)$.
and it is a morphism in $\gbhH $.
Moreover, $\ker \lambda _i$ is the ideal of $R$ generated by
$P'_i(R)$, and $\img \lambda _i$ is the subalgebra of $F_i(R)$
generated by $R(1)\subset F_i(R)(1)$.
\end{remar}
%The above remark and the following statement make the functors $F_i$,
%where $i\in \ndN \cup \{\infty \}$, more interesting.

\begin{propo}
  Let $R\in \gbhH $ and $i\in \ndN \cup \{\infty \}$.
  Let $\fie \langle R(1)\rangle $ be the subalgebra of $F_i(R)$ generated by
  $R(1)\subset F_i(R)(1)$, and let $(P'_i(R))$ be the ideal of $F_i(R)$
  generated by $P'_i(R)\subset F_i(R)(1)$.
  Then $\fie \langle R(1)\rangle \in \gbhH $,
  $(P'_i(R))$ is an $\ndN _0$-graded Hopf ideal of $F_i(R)$ in $\ydH $, and
  $F_i(R)=\fie \langle R(1)\rangle \oplus (P'_i(R))$.
  \label{pr:FiR}
\end{propo}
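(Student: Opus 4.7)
The plan is to verify the three claims in sequence; the first two are essentially formal, and the real content lies in the direct sum decomposition.

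First, I note that $F_i(R)(1)=R_1/R_0\cong P_i(R)$ as a Yetter--Drinfeld submodule of $R$, and that the induced coproduct on $F_i(R)$ satisfies $\copr (v)=v\ot 1+1\ot v$ for every $v\in P_i(R)$, because $P_i(R)\subset P(R)$. In particular, $P'_i(R)\subset P(F_i(R))$. Since $P'_i(R)$ is a Yetter--Drinfeld submodule of primitive elements lying in $F_i(R)^+$, the standard argument---the coideal condition on generators propagates through the braided multiplication via the Yetter--Drinfeld compatibility---shows that $(P'_i(R))$ is a Hopf ideal of $F_i(R)$; it is $\ndN _0$-graded since its generators lie in $F_i(R)(1)$. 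The same coproduct calculation applied to $R(1)\subset P(F_i(R))$ gives $\copr (\fie \langle R(1)\rangle )\subset \fie \langle R(1)\rangle \ot \fie \langle R(1)\rangle $, so $\fie \langle R(1)\rangle $ is a braided Hopf subalgebra; being connected, $\ndN _0$-graded, and generated in degree $1$, it belongs to $\gbhH $.

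For the direct sum decomposition, my key device is a bigrading. Since $P_i(R)=\oplus _{n=1}^i R(n)\cap P(R)$ is $\ndN _0$-graded with respect to the grading of $R$, each power $P_i(R)^j$ is graded, and hence so is $R_k=\sum _{j=0}^k P_i(R)^j$. I then obtain the refinement
\begin{align*}
  F_i(R)(k)=\bigoplus _n F_i(R)(k,n),\qquad F_i(R)(k,n):=(R_k\cap R(n))/(R_{k-1}\cap R(n)).
\end{align*}
A product of $k$ elements from $P_i(R)$ has $R$-degree at least $k$, with equality precisely when every factor lies in $R(1)$; moreover, $R_{k-1}\cap R(k)$ consists of products of fewer than $k$ factors from $P_i(R)$ of total $R$-degree $k$, which forces at least one factor into $P'_i(R)$. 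From these observations I expect the identifications
\begin{align*}
  \fie \langle R(1)\rangle (k)=F_i(R)(k,k),\qquad (P'_i(R))(k)=\bigoplus _{n>k} F_i(R)(k,n),
\end{align*}
which give $F_i(R)(k)=\fie \langle R(1)\rangle (k)\oplus (P'_i(R))(k)$ bidegree by bidegree, and hence the global decomposition.

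The main obstacle, though not conceptually deep, is the bookkeeping in this last step: one has to keep the \emph{length} grading of $F_i(R)$ (coming from the filtration $R_\bullet $) cleanly separated from the \emph{inner} $\ndN _0$-grading inherited from $R$, and verify that every product of $P_i(R)$-elements containing at least one $P'_i(R)$-factor lands strictly above the diagonal $n=k$, while the diagonal pieces arise, modulo $R_{k-1}$, from products of $R(1)$-elements.
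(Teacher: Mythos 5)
Your proof is correct, and the underlying observation is the same as the paper's: for a monomial of length $k$ in the generators $R(1)\cup P'_i(R)$, the inherited $R$-degree equals $k$ precisely when all factors lie in $R(1)$, and is strictly larger than $k$ whenever some factor lies in $P'_i(R)$. The paper exploits this via a representative argument: given $\bar{x}$ in the intersection $\fie\langle R(1)\rangle\cap (P'_i(R))$ of length-degree $k$, it lifts $\bar x$ once to a representative $x\in R(k)$ (using $\bar x\in\fie\langle R(1)\rangle$) and once to a representative $y\in\oplus_{j\ge k+1}R(j)$ (using $\bar x\in (P'_i(R))$), and concludes from $x-y\in R_{k-1}$ that $x=y=0$. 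Your bigrading repackaging is the cleaner route: refining the length filtration by the inherited $\ndN_0$-grading of $R$ and observing that $\fie\langle R(1)\rangle(k)$ sits on the diagonal $n=k$ while $(P'_i(R))(k)$ fills exactly the region $n>k$ gives the decomposition bidegree by bidegree, and simultaneously yields the sum being all of $F_i(R)$ (the paper handles that separately via generation). The extra dividend is an explicit structural description of the two summands rather than merely a disjointness statement. Two small points worth making explicit when you write this up: first, one should note $F_i(R)(k,n)=0$ for $n<k$, which follows from $P_i(R)^j\subset\oplus_{m\ge j}R(m)$ and hence $R_k\cap R(n)=R_{k-1}\cap R(n)$ for $n<k$, so the region $n\ge k$ really does exhaust $F_i(R)(k)$; and second, the equality $(P'_i(R))(k)=\oplus_{n>k}F_i(R)(k,n)$ needs both inclusions, the one showing the ideal lands off-diagonal (for directness) and the reverse showing every off-diagonal class is represented by a monomial with a $P'_i(R)$-factor (for the sum), both of which your degree count indeed provides.
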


\begin{proof}
  Clearly, $\fie \oplus R(1)$ is a subcoalgebra and Yetter-Drinfeld submodule
  of $F_i(R)$, and hence $\fie \langle R(1)\rangle \in \gbhH $. Further,
  $(P'_i(R))\in \ydH $ since $P'_i(R)\in \ydH $, and $(P'_i(R))$ is an $\ndN
  _0$-graded coideal of $F_i(R)$
  since $P'_i(R)$ is an $\ndN _0$-graded coideal of $F_i(R)$.
  The algebra $F_i(R)$ is generated by $R(1)\oplus P'_i(R)$, and
  therefore $\fie \langle R(1)\rangle +(P'_i(R))=F_i(R)$.
  It remains to show that the
  sum is direct. Since $\fie \langle R(1)\rangle $ and $(P'_i(R))$ are $\ndN
  _0$-graded, it suffices to consider $\ndN _0$-homogeneous components. Let
  $k\ge 0$ and $\bar{x}\in \fie \langle R(1)\rangle \cap (P'_i(R))\cap F_i(R)(k)$.
  Since $\bar{x}\in \fie \langle R(1)\rangle $, there exists a representant
  $x\in R(k)$ of $\bar{x}\in R_k/R_{k-1}$. Further, $\bar{x}\in (P'_i(R))$, and hence
  the set $x+R_{k-1}$
  contains an element $y$ of the ideal $(P'_i(R))\subset R$. By subtracting
  elements of $R_{k-1}$ from $y$ we may assume that $y$ is a linear
  combination of monomials containing $k$ factors from $R(1)\cup P'_i(R)$, and
  in each monomial at least one of the factors is from $P'_i(R)$. But
  $P'_i(R)\subset \oplus _{j\ge 2}R(j)$, and hence $y\in \oplus _{j\ge
  k+1}R(j)$. Since $x\in R(k)$ and $x-y\in R_{k-1}$,
  this implies that $x=y=0$ in $R_k/R_{k-1}$, and hence $\fie
  \langle R(1)\rangle \cap (P'_i(R))=0$.
\end{proof}

\begin{remar}
  Let $R\in \gbhH $ and let $R(1)=V\oplus W$ be a decomposition in $\ydH $.
  Let $\fie \langle V\rangle $ be the subalgebra of $R$ generated by
  $V$, and let $(W)$ be the ideal of $R$
  generated by $W$. Then in general the sum $R=\fie \langle V\rangle +
  (W)$ is not direct.
  
  For example, let $H=\fie (\ndZ /2\ndZ )^2$
  and assume that $g,h\in (\ndZ /2\ndZ )^2$
  generate $(\ndZ /2\ndZ )^2$ as a group.
  Let $V,W\in \ydH $ with $V=\Bbbk x$, $W=\Bbbk y$, and
  assume that the coaction and the action of $H$ satisfy
  \begin{gather*}
    \coa (x)=g\ot x,\quad \coa (y)=h\ot y,\\
    g\lact x=-x,\quad g\lact y=y,\quad
    h\lact x=x,\quad h\lact y=-y.
  \end{gather*}
  Then $\NI =(xy-yx,x^2-y^2)$ is an
  $\ndN _0$-graded Hopf
  ideal in $T(V\oplus W)$, where $V\oplus W\subset P(T(V\oplus W))$.
  Since $x^2\in \fie \langle V\rangle \cap (W)\subset T(V\oplus W)/\NI $,
  the sum $\fie \langle V\rangle +(W)$ in $T(V\oplus W)/\NI $ is not
  direct.
\end{remar}

\begin{lemma}
  Let $R\in \gbhH $, $k\in \ndN _0$, and $i\in \ndN $ such that $i>k$.
  If $R(m)\cap P(R)=0$ for all $m$ with $2\le m\le k$, then
  $\lambda _i(R)(m)\cap P(\lambda _i(R))=0$ for all $m$ with $2\le m\le k+1$.
  \label{le:noprim}
\end{lemma}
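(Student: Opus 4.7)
The plan is to show that if $\lambda_i(x) \in P(\lambda_i(R))$ for some $x \in R(m)$ with $2 \le m \le k+1$, then $x$ must already be primitive in $R$; once this is established, the hypothesis together with the description of $\ker \lambda_i$ forces $\lambda_i(x) = 0$ immediately.

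First I would unwind the kernel. By Remark~\ref{re:Fi}, $I := \ker \lambda_i$ is the ideal of $R$ generated by $P'_i(R) = \oplus_{n=2}^i R(n)\cap P(R)$; since this generating set is $\ndN_0$-graded, so is $I$. Using the hypothesis $R(n)\cap P(R) = 0$ for $2\le n\le k$, every homogeneous element of $P'_i(R)$ has degree in $\{k+1,\dots,i\}$, so the graded component $I(l)$ vanishes for every $l\le k$.

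Next I would use the quotient description $\lambda_i(R)\simeq R/I$. Take $\bar x = \lambda_i(x) \in \lambda_i(R)(m)\cap P(\lambda_i(R))$ with $x\in R(m)$. Primitivity of $\bar x$ in $R/I$ means
\[
\Delta'(x) := \Delta_R(x) - x\ot 1 - 1\ot x \in I\ot R + R\ot I,
\]
where the equality $\ker(\lambda_i\ot \lambda_i) = I\ot R + R\ot I$ uses surjectivity of the corestriction $R\to \lambda_i(R)$. Now $\Delta'(x)\in \oplus_{j=1}^{m-1}R(j)\ot R(m-j)$, and each bidegree $(j,m-j)$ with $1\le j\le m-1$ satisfies $j, m-j \le m-1 \le k$. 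Projecting $\Delta'(x)$ onto the $(j,m-j)$ bidegree component places it inside $I(j)\ot R(m-j) + R(j)\ot I(m-j)$, which is $0$ by the kernel computation above. Hence $\Delta'(x) = 0$, i.e.\ $x\in P(R)$.

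Finally I would split into cases. If $2\le m\le k$, the hypothesis gives $x=0$ directly. If $m=k+1$, the assumption $i>k$ yields $k+1\le i$, so $x \in R(k+1)\cap P(R)\subset P'_i(R)\subset \ker\lambda_i$, and again $\bar x = 0$. The only technical point worth attention is the bidegree decomposition of $\Delta'(x)$ combined with the vanishing $I(l)=0$ for $l\le k$; no genuine obstacle should appear, since both follow by carefully tracking the $\ndN_0$-grading and the hypothesis on primitives.
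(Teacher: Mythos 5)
Your proof is correct and takes essentially the same route as the paper: both rest on the characterization of $\ker\lambda_i$ as the ideal generated by $P'_i(R)$ (the ``end of Rem.~\ref{re:Fi}''), combined with the observation that the hypothesis forces $P'_i(R)=\oplus_{n=k+1}^i R(n)\cap P(R)$ and hence $\ker\lambda_i$ to vanish in degrees $\le k$. The paper leaves the bigrading bookkeeping on $\copr'(x)$ and the identification $\ker(\lambda_i\ot\lambda_i)=I\ot R+R\ot I$ implicit; you spell them out, which is a legitimate and faithful elaboration rather than a different argument.
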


\begin{proof}
  The condition $R(m)\cap P(R)=0$ for all $m$ with $2\le m\le k$ is equivalent
  to $P'_i(R)=\oplus _{n=k+1}^iR(n)\cap P(R)$.
  Since $\lambda _i(R)\simeq R/\ker \lambda _i$, the claim follows from
  the end of Rem.~\ref{re:Fi}.
\end{proof}

\begin{propo}\label{pr:lambda}
  Let $m\in \ndN $ and $i_2,i_3,\ldots ,i_m\in \ndN $
  such that $i_n\ge n$ for all $n\le m$. Let $R\in \gbhH $ and
  $$\lambda =\lambda _{i_m} \cdots \lambda _{i_3}\lambda _{i_2}:R\to 
  F_{i_m}\cdots F_{i_3}F_{i_2}(R).$$
  Then $\ker \lambda \cap R(k)=\NI _R \cap R(k)$ for all $k$ with $0\le k\le
  m$.
%  In particular, $\lambda _i:
%  \lambda (R(k))\to \lambda _i \lambda (R(k))$
%  is an isomorphism for all $i\in \ndN $ and all $k\le m$. 
\end{propo}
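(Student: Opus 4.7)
I will prove the proposition by joint induction on $m$ together with the auxiliary claim
\[
(\star )\quad S^{(m)}:=\mu _m(R)\text{ has no primitive elements in degrees }2,\ldots ,m,
\]
where $\mu _m:=\lambda _{i_m}\cdots \lambda _{i_2}=\lambda $. The base $m=1$ is vacuous, as $\lambda =\id _R$ and $\NI _R\subseteq (R^+)^2$ is concentrated in degrees $\ge 2$.

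\textbf{Easy inclusion.} Each $\lambda _{i_j}$ is injective on $R^{(j-1)}(1)$, where it coincides with the inclusion into $P_{i_j}(R^{(j-1)})=F_{i_j}(R^{(j-1)})(1)$; hence $\mu _m$ is injective in degrees $0$ and $1$. Consequently $\ker \mu _m\subseteq (R^+)^2$, and being a graded coideal in $(R^+)^2$ it is contained in $\NI _R$ by maximality, yielding $\ker \lambda \cap R(k)\subseteq \NI _R\cap R(k)$ for all $k$.

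\textbf{Hard inclusion modulo $(\star )$.} I would induct on $k$ with $0\le k\le m$. The cases $k\le 1$ are trivial. For $2\le k\le m$ and $x\in \NI _R\cap R(k)$, the coideal condition on $\NI _R$ combined with the grading gives
\[
\bar \copr (x)\in \sum _{\substack{j+l=k\\ j,l\ge 1}}\bigl((\NI _R\cap R(j))\ot R(l)+R(j)\ot (\NI _R\cap R(l))\bigr),
\]
where $\bar \copr (x):=\copr (x)-1\ot x-x\ot 1$. By the inner inductive hypothesis, each $\NI _R\cap R(j)$ with $j<k$ is already contained in $\ker \mu _m$, so $\bar \copr (\mu _m(x))=(\mu _m\ot \mu _m)(\bar \copr (x))=0$. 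Thus $\mu _m(x)\in P(S^{(m)})\cap S^{(m)}(k)$ with $2\le k\le m$, and $(\star )$ forces $\mu _m(x)=0$.

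\textbf{Proof of $(\star )$ and main obstacle.} I would establish $(\star )$ by the outer induction on $m$, applying Lem.~\ref{le:noprim} to $S^{(m-1)}$ as a standalone object of $\gbhH $. The inductive hypothesis yields $S^{(m-1)}(n)\cap P(S^{(m-1)})=0$ for $2\le n\le m-1$, so with $i_m\ge m>m-1$, Lem.~\ref{le:noprim} produces $\tilde S:=\lambda _{i_m}^{S^{(m-1)}}(S^{(m-1)})\subseteq F_{i_m}(S^{(m-1)})$ with no primitives in degrees $2,\ldots ,m$. The crux is to identify $\tilde S$ with $S^{(m)}=\lambda _{i_m}^{R^{(m-1)}}(S^{(m-1)})$. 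By naturality of $\lambda _{i_m}$ applied to $\iota \colon S^{(m-1)}\hookrightarrow R^{(m-1)}$, one has $\tilde S\simeq S^{(m-1)}/K_1$ and $S^{(m)}\simeq S^{(m-1)}/K_2$, where $K_1$ is the ideal of $S^{(m-1)}$ generated by $P'_{i_m}(S^{(m-1)})$ and $K_2=\ker \lambda _{i_m}^{R^{(m-1)}}\cap S^{(m-1)}$; the containment $K_1\subseteq K_2$ is immediate since $S^{(m-1)}$ is a Hopf subalgebra of $R^{(m-1)}$. The main obstacle is the reverse $K_2\subseteq K_1$: using the decomposition $F_i(-)=\fie \langle (-)(1)\rangle \oplus (P'_i(-))$ of Prop.~\ref{pr:FiR} iteratively, any representative $a\cdot p\cdot b\in K_2$ (with $p\in P'_{i_m}(R^{(m-1)})$) can be projected onto its $S^{(m-1)}$-component, which involves only $S^{(m-1)}$-primitives, so lies in $K_1$.
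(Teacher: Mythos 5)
Your plan mirrors the paper's proof: establish that $\lambda (R)$ has no nonzero primitives in degrees $2,\dots ,m$, then derive the kernel equality from the coideal property of $\NI _R$. The easy inclusion $\ker \lambda \subseteq \NI _R$ and the ``hard inclusion modulo $(\star )$'' are handled correctly.

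The step you flag as the ``main obstacle'' is, however, a genuine gap. Your sketch of $K_2\subseteq K_1$ presupposes a decomposition $R^{(m-1)}=S^{(m-1)}\oplus J$ with $J$ a two-sided (Hopf) ideal, against which you want to project a representative. Prop.~\ref{pr:FiR} only gives $R^{(m-1)}=\fie \langle R^{(m-2)}(1)\rangle \oplus (P'_{i_{m-1}}(R^{(m-2)}))$, and $\fie \langle R^{(m-2)}(1)\rangle $ is strictly larger than $S^{(m-1)}$: it is generated by all of $R^{(m-2)}(1)=P_{i_{m-1}}(R^{(m-2)})$, not only by $R(1)$. Cutting it further down to $S^{(m-1)}$ would require the decomposition $\fie \langle V\rangle +(W)$ of a braided Hopf algebra with respect to an arbitrary split $R(1)=V\oplus W$ to be direct, which the remark immediately after Prop.~\ref{pr:FiR} explicitly shows to be false in general. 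So ``projecting onto the $S^{(m-1)}$-component'' is not available, and the identification of $\lambda _{i_m}^{R^{(m-1)}}(S^{(m-1)})$ with $\lambda _{i_m}^{S^{(m-1)}}(S^{(m-1)})$ is not established. (The paper's own proof is terse at this point and invokes Lem.~\ref{le:noprim} ``by induction on $m$'' without spelling out the iteration.)

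You can avoid $(\star )$ altogether by a small rearrangement of your own hard-inclusion argument. Induct on $m$, with $\lambda '=\lambda _{i_{m-1}}\cdots \lambda _{i_2}$. The easy inclusion gives $\ker \lambda \subseteq \NI _R$. For the reverse, only $k=m$ is new (for $k\le m-1$ use the outer inductive hypothesis together with $\ker \lambda '\subseteq \ker \lambda $). Given $x\in \NI _R\cap R(m)$, the coideal property plus the inductive hypothesis in degrees $<m$ yield $(\lambda '\ot \lambda ')\bar \copr (x)=0$, so $\lambda '(x)$ is a primitive element of $R^{(m-1)}=F_{i_{m-1}}\cdots F_{i_2}(R)$ of $\ndN _0$-degree $m$ with $2\le m\le i_m$. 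By Rem.~\ref{re:Fi}, $\lambda '(x)\in P'_{i_m}(R^{(m-1)})\subseteq \ker \lambda _{i_m}$, whence $\lambda (x)=\lambda _{i_m}(\lambda '(x))=0$. This is the same mechanism as in your hard-inclusion step, except the role of ``no primitives in $S^{(m)}$'' is played directly by the observation that the last map $\lambda _{i_m}$ annihilates the primitive $\lambda '(x)$ precisely because $i_m\ge m$, so no identification of images is needed.
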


\begin{proof}
  By Lemma~\ref{le:noprim} and induction on $m$ one obtains that $\lambda
  (R(k))$ has, if $2\le k\le m$, no primitive elements besides $0$.
  Since $R$ is generated by $R(1)$ and
  $\lambda (R)\simeq R/\ker \lambda $, we obtain that
  $\ker \lambda \cap R(k)=\NI _R\cap R(k)$ for $0\le k\le m$.
  Now, if $i\le m$, then $P'_i(F_{i_m}\cdots F_{i_3}F_{i_2}(R))\cap
  \lambda (R)=0$, and hence $\lambda _i:\lambda (R)\to \lambda _i\lambda (R)$
  is an isomorphism. Otherwise $i>m$, and the claim follows from the
  equalities
  $$\ker \lambda \cap R(k)=\ker \lambda _i\lambda \cap R(k)=\NI _R\cap R(k),
  \qquad 0\le k\le m,$$
  shown in the first part of the proof.
\end{proof}

\section{Braided Hopf algebras and Yetter-Drinfeld modules}

In Thm.~\ref{th:RisNA} we give a criterion for
a braided Hopf algebra to be isomorphic
to a Nichols algebra in $\ydH $.
First we show that $F_i(R)\simeq R$ in $\ydH $ for all $R\in \gbhH $
which are semisimple objects in $\ydH $.

%Prop.~\ref{pr:R=BW} and Thm.~\ref{th:RisNA}.
%We consider two settings in which a braided Hopf algebra
%is isomorphic, as a Yetter-Drinfeld module, to a Nichols algebra, see
%Prop.~\ref{pr:R=BW} and Thm.~\ref{th:RisNA}.

\begin{lemma}
  Let $R$ be a coalgebra in $\ydH $ and let $R=\cup _{n=0}^\infty R_n$ be a
  coalgebra filtration of $R$ in $\ydH $.
  % such that $\copr R_n\subset \sum _{j=0}^nR_j\ot R_{n-j}$
  % for all $n\in \ndN _0$.
  Suppose that $R_n$ is a semisimple object in $\ydH $ for all $n\in \ndN _0$.
  Let $\gr \,R=\oplus _{n=0}^\infty R(n)$,
  where $R_{-1}=0$ and $R(n)=R_n/R_{n-1}$ for all $n\in \ndN _0$. Then the
  coalgebra $\gr \,R$ is a semisimple object in $\ydH $, and $\gr \,R\simeq R$
  as objects in $\ydH $.
  \label{le:ydiso1}
\end{lemma}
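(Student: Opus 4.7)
The plan is to construct the isomorphism $\gr \,R\simeq R$ by inductively splitting the filtration, exploiting the semisimplicity of each $R_n$ in $\ydH $. First, since $R(n)=R_n/R_{n-1}$ is a quotient in $\ydH $ of the semisimple Yetter-Drinfeld module $R_n$ for every $n\ge 0$, each $R(n)$ is itself semisimple; consequently $\gr \,R=\oplus _{n\ge 0}R(n)$ is a semisimple object in $\ydH $.

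For the isomorphism $\gr \,R\simeq R$ I would produce a family of splittings as follows. Because $R_n$ is semisimple in $\ydH $, the Yetter-Drinfeld submodule $R_{n-1}\subset R_n$ admits a complement $V_n\subset R_n$ in $\ydH $, so that $R_n=R_{n-1}\oplus V_n$ and the restriction to $V_n$ of the canonical projection $R_n\to R(n)$ is an isomorphism in $\ydH $. Setting $V_0=R_0$, an easy induction on $n$ gives $R_n=V_0\oplus V_1\oplus \cdots \oplus V_n$ in $\ydH $ for every $n\ge 0$. Since $R=\cup _{n\ge 0}R_n$, it follows that $R=\oplus _{n\ge 0}V_n$ in $\ydH $; combining this with the isomorphisms $V_n\simeq R(n)$ constructed above yields $R\simeq \gr \,R$ as objects in $\ydH $.

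The only point where the hypothesis is actually used is the existence of the complements $V_n$ as Yetter-Drinfeld submodules of $R_n$, which is precisely the semisimplicity of each $R_n$ in $\ydH $. I do not foresee a real obstacle here: no compatibility of splittings across different levels of the filtration is required, since each $V_n$ is constructed inside $R_n$ and the global direct sum is assembled inside $R$ itself. The coalgebra structure of $R$ and the fact that $(R_n)_{n\ge 0}$ is a filtration by subcoalgebras play no role in proving this $\ydH $-linear isomorphism; they are presumably recorded in the hypothesis only because $\gr \,R$ will later be compared with $R$ as a coalgebra via the resulting map.
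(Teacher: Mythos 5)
Your proof is correct and takes essentially the same route as the paper. The paper phrases the inductive step as extending the isomorphism $R_n \simeq \oplus_{i=0}^n R(i)$ to $R_{n+1} \simeq \oplus_{i=0}^{n+1} R(i)$, which amounts precisely to the compatibility you achieve for free by choosing the complements $V_n$ inside $R$ itself; the two presentations are the same argument. Your closing remark is also accurate: the coalgebra structure is irrelevant to the proof of the $\ydH$-isomorphism and is carried in the statement only because $\gr R$ is later used as a coalgebra.
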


\begin{proof}
%  By assumption $R_n\in \ydH $ for all $n\in \ndN _0$.
  For each $n\in \ndN _0$ the sequence
  \begin{align*}
    0\to R_{n-1}\to R_n\to R(n)\to 0
  \end{align*}
  in $\ydH $ is exact.
  Thus $R_n\simeq R(n)\oplus R_{n-1}$ in $\ydH $ and
  $R(n)$ is semisimple for all $n\in \ndN _0$, since
  $R_n$ is semisimple for all $n\in \ndN _0$.
  By induction one obtains for all $n\in \ndN _0$ that
  $R_n\simeq \oplus _{i=0}^n R(i)$ in $\ydH $.
  For all $n\in \ndN _0$ the isomorphism
  $R_n\simeq \oplus _{i=0}^n R(i)$ can
  be extended to an isomorphism $R_{n+1}\simeq \oplus _{i=0}^{n+1} R(i)$,
  since $R_{n+1}\simeq R_n\oplus R(n+1)$.
  Thus the lemma follows from
  \begin{align*}
    {R=\cup _{n\in \ndN } R_n\simeq \cup _{n\in \ndN } \oplus _{i=0}^n R(i)
    \simeq \oplus _{i=0}^\infty R(i)=\gr \,R. \hfill \popQED \qed}
  \end{align*}
\end{proof}

\begin{lemma}
  Let $R$ be an algebra in $\ydH $ which is generated by a subspace $U\in
  \ydH $.
  Let $R_{-1}=0$, $R_0=\fie $,
  $R_n=(U+\fie)^n\subset R$ for all $n\in \ndN $,
  and $\gr \,R=\oplus _{n=0}^\infty R_n/R_{n-1}$.
  If $R_n$ is semisimple in $\ydH $ for all $n\in \ndN _0$, then
  $\gr \,R\simeq R$ as objects in $\ydH $.
  \label{le:ydiso2}
\end{lemma}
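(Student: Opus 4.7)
The plan is to mimic the proof of Lemma~\ref{le:ydiso1} almost verbatim, replacing the coalgebra filtration by the algebra filtration $(R_n)_{n\ge 0}$. The coalgebra/algebra structure was never really used in that argument; all that was needed was that the $R_n$ form an ascending chain of Yetter-Drinfeld submodules whose successive quotients split off, and semisimplicity is precisely what guarantees the splitting.

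First I would check that each $R_n$ actually lies in $\ydH$ as a submodule of $R$. Since $U\in \ydH $ and $R$ is an algebra in $\ydH $, the multiplication map $R\ot R\to R$ is a morphism in $\ydH $. Iterating, the map $(U+\fie)^{\ot n}\to R$ is a morphism in $\ydH $, and its image is $R_n$. Hence $R_n\in \ydH $, $R_{n-1}\subset R_n$ is a subobject, and the quotient $R(n)=R_n/R_{n-1}$ lies in $\ydH $. For each $n\in \ndN _0$ we thus have a short exact sequence
\begin{align*}
  0\to R_{n-1}\to R_n\to R(n)\to 0
\end{align*}
in $\ydH $.

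Second, semisimplicity of $R_n$ forces this sequence to split, yielding $R_n\simeq R_{n-1}\oplus R(n)$ in $\ydH $, and each summand $R(n)$ is itself semisimple as a quotient of a semisimple object. A straightforward induction on $n$ then gives $R_n\simeq \oplus _{i=0}^n R(i)$ in $\ydH $. Moreover, since $R_{n+1}\simeq R_n\oplus R(n+1)$, any chosen isomorphism $R_n\simeq \oplus _{i=0}^n R(i)$ extends to an isomorphism $R_{n+1}\simeq \oplus _{i=0}^{n+1} R(i)$ in $\ydH $.

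Finally, passing to the union $R=\cup _{n\in \ndN _0}R_n$ with these compatible splittings produces $R\simeq \oplus _{i=0}^\infty R(i)=\gr \,R$ in $\ydH $. The only point requiring a mild care is to choose the splittings at each stage so as to extend the previous one, so that they assemble into a single isomorphism in the colimit; this is possible precisely because we work one finite semisimple object at a time. I do not expect any genuine obstacle — the argument is essentially formal once the filtration is known to live in $\ydH $.
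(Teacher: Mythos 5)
Your proposal is correct and matches the paper's argument: the paper likewise reduces to the short exact sequence and splitting argument from Lemma~\ref{le:ydiso1}, after noting (as you do via the multiplication morphism) that the filtration terms $R_n$ are subobjects of $R$ in $\ydH$. The only tiny imprecision is the remark about "one finite semisimple object at a time" — finiteness is not needed, only that $R_{n+1}\simeq R_n\oplus R(n+1)$ lets one extend the isomorphism at each step — but this does not affect the validity of your argument.
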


\begin{proof}
  Since $\fie \ot W\simeq W$ for all $W\in \ydH $, $R_{n-1}$ is a subobject of
  $R_n$ in $\ydH $ for all $n\in \ndN _0$.
  Thus the arguments in the proof of Lemma~\ref{le:ydiso1} yield the claim.
\end{proof}

\begin{corol}\label{co:FRsemi}
  Let $R\in \gbhH $ and $i\in \ndN $. If $R(n)$
  is semisimple in $\ydH $ for all $n\in \ndN $, then
  $F_i(R)(n)$ is semisimple in $\ydH $ for all $n\in \ndN $,
  and $F_i(R)\simeq R$ as objects in $\ydH $.
\end{corol}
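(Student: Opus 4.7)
The plan is to apply Lemma~\ref{le:ydiso2} with the generating subspace $U=P_i(R)$; both conclusions of the corollary will drop out at once. Two hypotheses need to be verified: that $P_i(R)$ generates $R$ as an algebra, and that each filtration piece $R_k=(\fie \oplus P_i(R))^k$ is semisimple in $\ydH $.

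The first point is immediate: since $R\in \gbhH $ is generated as an algebra by $R(1)$, and since every element of $R(1)$ is primitive in the connected $\ndN _0$-graded braided bialgebra $R$ (a standard consequence of $R(0)=\fie $ together with the fact that $\copr $ is graded), we have $R(1)\subseteq P_i(R)$ for every $i\ge 1$. For the second, note that $P_i(R)\subseteq \oplus _{n=1}^i R(n)$ forces $R_k\subseteq \oplus _{m=0}^{ki}R(m)$. The right-hand side is a finite direct sum of Yetter-Drinfeld modules that are semisimple by hypothesis, hence is itself semisimple in $\ydH $. Since $P_i(R)$ is a Yetter-Drinfeld submodule of $R$ and multiplication is a morphism in $\ydH $, $R_k$ is a Yetter-Drinfeld submodule of this ambient semisimple object; because subobjects of semisimple objects in the abelian category $\ydH $ are semisimple, each $R_k$ is semisimple in $\ydH $.

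With both hypotheses in hand, Lemma~\ref{le:ydiso2} produces an isomorphism $\gr \,R\simeq R$ in $\ydH $, and the definition $F_i(R)=\oplus _{k\ge 0}R_k/R_{k-1}$ identifies $\gr \,R$ with $F_i(R)$, yielding $F_i(R)\simeq R$ in $\ydH $. For the remaining assertion, each component $F_i(R)(n)=R_n/R_{n-1}$ is a quotient of the semisimple $R_n$, hence is itself semisimple in $\ydH $. I do not anticipate a serious obstacle here: the argument is essentially a short bookkeeping step on top of Lemma~\ref{le:ydiso2}, the only genuine input being the degree bound that confines each $R_k$ to a finite direct sum of semisimple graded pieces of $R$.
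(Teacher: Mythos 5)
Your proof is correct and follows the same route the paper takes: the paper's proof simply invokes Lemma~\ref{le:ydiso2}, and your write-up supplies exactly the bookkeeping (that $R(1)\subseteq P_i(R)$ so $P_i(R)$ generates $R$, and that each $R_k$ sits inside the finite semisimple sum $\oplus_{m\le ki}R(m)$ and is therefore semisimple) that makes that invocation valid.
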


\begin{proof}
  This is clear by the definition of $F_i$ and by Lemma~\ref{le:ydiso2}.
\end{proof}

Recall that for any embedding $\io{U}{V}:U\subset V$ in $\ydH $ there
exists a canonical embedding $\NA (\io{U}{V}):\NA (U)\to \NA (V)$ of braided
Hopf algebras in
$\ydH $ such that $\NA (\io{U}{V})|_U=\io{U}{V}$.

%\begin{proof}
%  First note that $I_i$ is an $\ndN _0$-graded object in $\ydH $ for all $i\in
%  \ndN _0$.
%  By definition, $I_0\subset \oplus _{n\ge 2}R(n)$, hence
%  $R(0)=\fie $ implies that $P(F^1(R)(2))=0$. Then
%  $I_1\subset \oplus _{n\ge 3}F^1(R)(n)$ and $P(F^2(R)(n))=0$ for $n\in
%  \{2,3\}$. By induction it follows that $I_i\subset \oplus _{n\ge
%  i+2}F^i(R)(n)$ and $P(F^i(R)(n))=0$ for $2\le n\le i+1$. Thus
%  $\vec{R}:=\lim _{i\to \infty }F^i(R)$ is a connected $\ndN _0$-graded
%  braided Hopf algebra in $\ydH $ generated by $\vec{R}(1)$, and
%  $P(\vec{R})=\vec{R}(1)$. Then the claim follows e.\,g.\ from
%\end{proof}

\begin{lemma}
  Let $R\in \gbhH $ and $i\in \ndN $ such that all finite tensor powers of
  $R(1)$ are semisimple in $\ydH $.
  Let $\iota _R:\NA (R(1))\to R$ be an
  $\ndN _0$-graded splitting of the exact sequence
\begin{align*}
  0 \to \NI _R \longrightarrow R \overset{\pi _R}{\longrightarrow }
  \NA (R(1)) \to 0
%  \begin{CD}
%  0 & @>>> & \NI _R & @>>> & R & @>>> & \NA (R(1)) & @>>> & 0,
%  \\
%  0 & @>>> & \NI _{\gr \,R} & @>>> & \gr \,R & @>>> & \NA (U) & @>>> & 0
%  \end{CD}
\end{align*}
  in $\ydH $.
  Then there is an isomorphism $\Phi :R\to F_i(R)$ in $\ydH $ and an
  $\ndN _0$-graded splitting $\iota _{F_i(R)}:\NA (P_i(R))\to F_i(R)$
  of the exact sequence
\begin{align*}
  0 \to \NI _{F_i(R)} \longrightarrow F_i(R) \overset{\pi
  _{F_i(R)}}{\longrightarrow } \NA (P_i(R)) \to 0
\end{align*}
  in $\ydH $ with the following properties.
  \begin{enumerate}
    \item $\Phi (R(n))\subset \oplus _{k=0}^n F_i(R)(k)$
      for all $n\in \ndN _0$.
    \item Let $\gr \,\Phi \in \Hom (R,F_i(R))$ such that
      $\gr \,\Phi |_{R(n)} = \pr _n\circ \Phi |_{R(n)}$ for
      all $n\in \ndN _0$, where $\pr _n:F_i(R)\to F_i(R)(n)$ is the canonical
      projection. Then $\gr \,\Phi =\lambda _i$.
    \item The following diagram is commutative.
  \begin{equation}
    \begin{CD}
      \NA (R(1)) & @>\iota _R>> & R\\
      @V{\NA (\io{R(1)}{P_i(R)})}VV & & @VV\Phi V \\
      \NA (P_i(R)) & @>\iota _{F_i(R)}>> & F_i(R).
    \end{CD}
    \label{eq:cdR}
  \end{equation}
  \end{enumerate}
  \label{le:moregrR}
\end{lemma}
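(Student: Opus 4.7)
The plan is to construct $\Phi$ by applying Lemma~\ref{le:ydiso2} to $R$ with the generating subspace $U=P_i(R)$ (which generates $R$ since $R(1)\subset P_i(R)$), and then to build $\iota_{F_i(R)}$ compatibly. To apply Lemma~\ref{le:ydiso2} I need each $R_n=(\fie\oplus P_i(R))^n$ to be semisimple in $\ydH$. Since $P_i(R)\subset \oplus_{k=1}^{i}R(k)$ and each $R(k)$ is a quotient of the semisimple object $R(1)^{\ot k}$ (by the hypothesis on tensor powers of $R(1)$), the subobject $R_n$ sits inside the semisimple object $\oplus_{k=0}^{ni}R(k)$, hence is semisimple. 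The lemma then produces an isomorphism $\Phi:R\to F_i(R)$ in $\ydH$ built from $\ydH$-sections $s_n:F_i(R)(n)\to R_n$ of the short exact sequence $0\to R_{n-1}\to R_n\to F_i(R)(n)\to 0$ via the recursion $\Phi(s_n(\xi)+r)=\xi+\Phi(r)$ for $\xi\in F_i(R)(n)$, $r\in R_{n-1}$. Property~(1) is immediate from $R(n)\subset R_n$, and property~(2) follows because for $x\in R(n)\subset R_n$ the unique decomposition $x=s_n(\xi)+r$ forces $\xi=x+R_{n-1}=\lambda_i(x)$, so the top-degree component of $\Phi(x)$ equals $\lambda_i(x)$.

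For~(3) I first establish the identity $\pi_{F_i(R)}\circ \lambda_i=\NA(\io{R(1)}{P_i(R)})\circ \pi_R$: both sides are morphisms in $\gbhH$ from $R$ to $\NA(P_i(R))$ that agree with $\io{R(1)}{P_i(R)}$ on the generating subspace $R(1)$, hence they coincide. By Rem.~\ref{re:Fi}, $\ker \lambda_i$ is the ideal $(P'_i(R))\subset R$, and $(P'_i(R))\subset \NI_R$ because $P'_i(R)\subset P(R)\cap (R^+)^2$ is a coideal contained in $(R^+)^2$, hence lies in the maximal such coideal $\NI_R$, which is a Hopf ideal. Since $\iota_R(\NA(R(1)))\cap \NI_R=0$ by the splitting property, $\lambda_i$ is injective on $\iota_R(\NA(R(1)))$; set $V:=\lambda_i(\iota_R(\NA(R(1))))\subset F_i(R)$, a graded subobject in $\ydH$ isomorphic to $\NA(R(1))$. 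The identity implies that $\pi_{F_i(R)}|_V$ is an isomorphism onto $\NA(\io{R(1)}{P_i(R)})(\NA(R(1)))$, so in particular $V\cap \NI_{F_i(R)}=0$. Using semisimplicity of each $F_i(R)(n)$ (Cor.~\ref{co:FRsemi}), I extend $V$ degree by degree to a graded subobject $V'\subset F_i(R)$ complementary to $\NI_{F_i(R)}$ and define $\iota_{F_i(R)}:=(\pi_{F_i(R)}|_{V'})^{-1}$; this is an $\ndN_0$-graded splitting satisfying $\iota_{F_i(R)}\circ \NA(\io{R(1)}{P_i(R)})=\lambda_i\circ \iota_R$.

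The main obstacle is that diagram~\eqref{eq:cdR} requires $\Phi\circ \iota_R=\iota_{F_i(R)}\circ \NA(\io{R(1)}{P_i(R)})$, not the weaker identity with $\lambda_i$. I resolve this by choosing the sections $s_n$ from the first paragraph with foresight. Since $\iota_R(\NA(R(1))(n))\cap R_{n-1}=0$ (because $\lambda_i$ is injective on $\iota_R(\NA(R(1)))$), the map $\lambda_i$ identifies $\iota_R(\NA(R(1))(n))$ with $V(n)\subset F_i(R)(n)$; I choose $s_n|_{V(n)}$ to be the inverse of this identification and extend it to all of $F_i(R)(n)$ by picking, via semisimplicity of $F_i(R)(n)$, a $\ydH$-complement $N_n$ of $V(n)$ together with any $\ydH$-section of $\pi_n$ on $N_n$. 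With this choice the recursion evaluates to $\Phi(v)=\lambda_i(v)$ for every $v\in \iota_R(\NA(R(1)))$, whence $\Phi\circ \iota_R=\lambda_i\circ \iota_R=\iota_{F_i(R)}\circ \NA(\io{R(1)}{P_i(R)})$ and diagram~\eqref{eq:cdR} commutes.
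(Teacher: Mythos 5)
Your proof is correct and follows the same overall strategy as the paper's: construct $\Phi$ from the filtration $R_n=(\fie\oplus P_i(R))^n$ using $\ydH$-sections, make a foresighted choice of those sections so that $\Phi\circ\iota_R=\lambda_i\circ\iota_R$, establish $\pi_{F_i(R)}\circ\lambda_i=\NA(\io{R(1)}{P_i(R)})\circ\pi_R$ by comparing morphisms in $\gbhH$ on the generating subspace $R(1)$, and then build $\iota_{F_i(R)}$ so that it carries $\NA(\io{R(1)}{P_i(R)})(\NA(R(1)))$ back to $\lambda_i\iota_R(\NA(R(1)))$. The one place you genuinely diverge is the construction of $\iota_{F_i(R)}$: the paper uses the decomposition $F_i(R)=\fie\langle R(1)\rangle\oplus(P'_i(R))$ from Prop.~\ref{pr:FiR} (together with the fact that $\pi_{F_i(R)}$ respects it) and sets $\iota_{F_i(R)}=(\lambda_i\iota_R)\oplus\iota''$ with $\iota''$ an arbitrary section on the second summand, whereas you observe directly that $V:=\lambda_i\iota_R(\NA(R(1)))$ meets $\NI_{F_i(R)}$ trivially and extend it degree-by-degree, via semisimplicity of each $F_i(R)(n)$, to a graded $\ydH$-complement $V'$ of $\NI_{F_i(R)}$, setting $\iota_{F_i(R)}:=(\pi_{F_i(R)}|_{V'})^{-1}$. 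Your version is a bit more hands-on but avoids invoking Prop.~\ref{pr:FiR}; the paper's version is slicker once that proposition is available. Both work.
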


\begin{proof}
  First we define the map $\Phi |_{R(k)}$ for all $k\in \ndN _0$ and prove
  that it satisfies (1) and (2).
  Then we show that $\Phi $ is bijective, and that the required section
  $\iota _{F_i(R)}$ of $\pi _{F_i(R)}$, which makes Diagram~\eqref{eq:cdR}
  commutative, exists.

  For all $k\in \ndN _0$ let $(P'_i(R))(k)=(P'_i(R))\cap R(k)$, where
  $(P'_i(R))\subset R$ is the ideal of $R$ generated by $P'_i(R)$.
  Since $P'_i(R)\subset \oplus _{n=2}^\infty R(n)$ is an $\ndN _0$-graded
  object in $\ydH $, we obtain that
  $(P'_i(R))=\oplus _{k=2}^\infty (P'_i(R))(k)$ and that
  $(P'_i(R))(k)\in \ydH $ for all $k\in \ndN _0$.
  Further, the sum
  \begin{align}
    \iota _R(\NA ^k(R(1)))+(P'_i(R))(k)\subset R(k)
    \label{eq:ioplusP}
  \end{align}
  is direct.
  Indeed, $\pi _R$ is an algebra map, $\pi _R(P'_i(R))=0$, and
  $\pi _R\iota _R=\id _{\NA (R(1))}$. Hence, if $x\in \NA ^k(R(1))$ and $\iota
  _R(x)\in (P'_i(R))(k)$, then $\pi _R(x)\in \pi _R( (P'_i(R)) )=0$ and
  $\pi _R(x)=\pi _R\iota _R(x)=x$, and hence $x=0$.

  For all $j,k\in \ndN $ let $V^{i k}_j\in \ydH $ such that
  $$R(k)=(R_j\cap R(k))\oplus V^{i k}_j$$
  (see Eq.~\eqref{eq:Rk}). The $V^{i k}_j$ exist, since $R(k)$ is semisimple
  in $\ydH $.
  The relation $P'_i(R)\subset \oplus _{n=2}^\infty R(n)$ implies that
  $$(P'_i(R))(k)=R_{k-1}\cap (P'_i(R))(k)=R_{k-1}\cap R(k),
    %=\cup _{j=1}^{k-1}R_j\cap (P'_i(R))(k).
    %=\cup _{j=1}^{k-1}R_j\cap R(k).
  $$
  and hence by Eq.~\eqref{eq:ioplusP} we may assume that
  $\iota _R(\NA ^k(R(1))) \subset V^{ik}_j$ if $j<k$.
  Let $k\in \ndN _0$. We define $\Phi _k:R(k)\to F_i(R)$ by setting
  \begin{equation}
    \begin{aligned}
    &\Phi _k:=\oplus _{j=0}^k\Phi _{k j},\qquad
    \Phi _{k j}: R(k)\to F_i(R)(j)=R_j/R_{j-1},\\
    &\Phi _{k j}(x):=
    \begin{cases}
      0 & \text{for $x\in V^{i k}_j$,}\\
      x+R_{j-1} & \text{for $x\in R_j\cap R(k)$,}
    \end{cases}
  \end{aligned}
    \label{eq:Phidec}
  \end{equation}
  where $0\le j\le k$. Then it is clear that (1) holds,
  and since $\gr \,\Phi |_{R(k)}=\Phi _{k
  k}$, we also get (2) by the definition of $\lambda _i$ in
  Eq.~\eqref{eq:lambda}.

  Now we prove that $\Phi $ is bijective.
  Suppose first that $x=\sum _{k=0}^n x_k$, where $x_k\in R(k)$ for all $k\le
  n$, and $\Phi (x)=0$. Then
  $$\Phi (x)=\sum _{j=0}^n\sum _{k=j}^n \Phi _{k j}(x_k)
  \in \sum _{j=0}^n F_i(R)(j)$$
  by definition of $\Phi $ and by (1), and hence
  $\sum _{k=j}^n \Phi _{k j}(x_k)=0$ for all $j\in \{0,1,\ldots ,n\}$.
  Using the definition of $\Phi $ again and again, and the facts that
  $R(k)\subset R_k$ and $R_k$ is $\ndN _0$-graded for all $k\in \ndN _0$,
  we conclude now as follows.
  \begin{align*}
    &\Phi _{n n}(x_n)=0 \quad \Rightarrow \quad x_n\in R_{n-1},\\
    &\Phi _{n-1\,n-1}(x_{n-1})+\Phi _{n\,n-1}(x_n)=0 \quad \Rightarrow
      x_{n-1}+x_n\in R_{n-2} \quad \Rightarrow \\
      &\qquad x_{n-1},x_n\in R_{n-2},\\
    &\Phi _{n-2\,n-2}(x_{n-2})+\Phi _{n-1\,n-2}(x_{n-1})+\Phi _{n\,n-2}(x_n)=0
      \quad \Rightarrow \\
      &\qquad x_{n-2}+x_{n-1}+x_n\in R_{n-3} \Rightarrow 
      x_k\in R_{n-3} \quad \text{if $n-2\le k\le n$, \dots }\\
      &\qquad \sum _{k=1}^n x_k\in R_0=\fie \Rightarrow
      x_k=0 \quad \text{if $1\le k\le n$,}\\
    &\Phi _{0 0}(x_0)=0 \quad \Rightarrow x_0\in R_{-1}=0.
  \end{align*}
  Thus $x_k=0$ for all $k\in \{0,1,\ldots ,n\}$, and hence $\Phi $ is
  injective.
  The surjectivity of $\Phi $ follows immediately from the decomposition
  \begin{align*}
    R_j/R_{j-1}=&\big((R(1)\oplus P'_i(R))^j+R_{j-1}\big)/R_{j-1}\\
    =&\oplus _{k=j}^\infty (R(k)\cap R_j+R_{j-1})/R_{j-1}\\
    =&\oplus _{k=j}^\infty (R(k)\cap R_j+R(k)\cap R_{j-1})/(R(k)\cap
    R_{j-1})
  \end{align*}
  and the definition of $\Phi $.

  Now we prove (3). Note that the canonical map $\pi _{F_i(R)}: F_i(R)\to \NA
  (P_i(R))$ is compatible with the decomposition
  $F_i(R)=\fie \langle R(1)\rangle \oplus (P'_i(R))$, see Prop.~\ref{pr:FiR},
  in the sense that
  $\pi _{F_i(R)}(\fie \langle R(1)\rangle ) \subset \fie \langle R(1)\rangle $
  and
  $\pi _{F_i(R)}\big( (P'_i(R)) \big) \subset (P'_i(R))$.
  Let $\pi ': \fie \langle R(1)\rangle \to \fie \langle R(1)\rangle $ and
  $\pi '': (P'_i(R))\to (P'_i(R))$ denote the components of $\pi _{F_i(R)}$,
  that is, $\pi _{F_i(R)}=\pi '\oplus \pi ''$.
  Let $\iota ''_{F_i(R)}: (P'_i(R))\to (P'_i(R))$
  be an arbitrary section of $\pi ''$ in $\ydH $, and define
  $$\iota '_{F_i(R)}: \fie \langle R(1)\rangle 
  \to \fie \langle R(1)\rangle ,\qquad
  \iota '_{F_i(R)}:= \lambda _i\circ \iota _R,
  $$
  where the domain of $\iota '_{F_i(R)}$ is a subobject of $\NA (P'_i(R))$,
  and the range of $\iota '_{F_i(R)}$ is a subobject of $F_i(R)$.
  Let $\iota _{F_i(R)}:=\iota '_{F_i(R)}\oplus \iota ''_{F_i(R)}$.
  Note that the choice of
  $\iota '_{F_i(R)}$ is necessary. Indeed, in order to make
  Diagram~\eqref{eq:cdR} commutative, we need precisely that
  $\iota '_{F_i(R)}= \Phi \circ \iota _R$. However,
  Eq.~\eqref{eq:Phidec} and the assumption
  $\iota _R(\NA ^k(R(1))) \subset V^{ik}_j$ for $j<k$
  imply that $\Phi _{k j}\iota _R(\NA ^k(R(1)))=0$
  for $j<k$, and hence
  $\Phi \iota _R=(\gr \,\Phi )\iota _R=\lambda _i\iota _R$.

  It remains to show that
  $\pi _{F_i(R)}\iota '_{F_i(R)}(\fie \langle R(1)\rangle )=\id |_{\fie
  \langle R(1)\rangle }$. Equivalently, we have to prove that the diagram
  \begin{equation*}
    \begin{CD}
      \NA (R(1)) & @>\iota _R>> & R\\
      @V{\NA (\iota _{R(1)}^{P_i(R)})}V\simeq V & & @VV\lambda _i V \\
      \NA (P_i(R))\supset \fie \langle R(1)\rangle & @<\pi _{F_i(R)}<< &
      \fie \langle R(1)\rangle \subset F_i(R)
    \end{CD}
  \end{equation*}
  is commutative. Clearly, this is equivalent to the commutativity of
  the diagrams
  \begin{equation*}
    \begin{CD}
      \NA (R(1)) & @>\iota _R>\simeq > & \iota _R(\NA (R(1)))\\
      @V{\NA (\iota _{R(1)}^{P_i(R)})}VV & & @VV\lambda _i V \\
      \NA (P_i(R)) & @<\pi _{F_i(R)}<< & F_i(R),
    \end{CD}
    \qquad
    \begin{CD}
      \NA (R(1)) & @<\pi _R<\simeq < & \iota _R(\NA (R(1)))\\
      @V{\NA (\iota _{R(1)}^{P_i(R)})}VV & & @VV\lambda _i V \\
      \NA (P_i(R)) & @<\pi _{F_i(R)}<< & F_i(R),
    \end{CD}
  \end{equation*}
  where the second one is obtained from the first one by using
  that $\iota _R$ is a section of $\pi _R$.
  The second diagram is easily seen to be commutative.
  Indeed, the diagram
  \begin{equation*}
    \begin{CD}
      \NA (R(1)) & @<\pi _R<< & R\\
      @V{\NA (\iota _{R(1)}^{P_i(R)})}VV & & @VV\lambda _i V \\
      \NA (P_i(R)) & @<\pi _{F_i(R)}<< & F_i(R),
    \end{CD}
  \end{equation*}
  is a diagram in $\gbhH $, and it is commutative, since it is commutative
  on the generators.
\end{proof}

\begin{theor}\label{th:RisNA}
  Let $R$ be a connected braided Hopf algebra which is generated as
  an algebra by $U\subset P(R)$, where $U\in \ydH $,
  and assume that all tensor powers of $U$ are semisimple in $\ydH $.
  Then there is a subobject $V\subset R$ in $\ydH $
  such that $U\subset V$, all tensor powers of $V$ are semisimple in $\ydH $,
  and $R\simeq \NA (V)$ in $\ydH $.
\end{theor}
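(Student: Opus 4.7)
The approach is to reduce to the graded case, iteratively apply the functors $F_i$ to absorb higher-degree primitives into degree one, and finally extract $V$ as a direct limit.

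First I reduce to the case $R\in\gbhH$. Since $U\subset P(R)$ generates $R$ as an algebra, the ascending filtration $R_n=(\fie+U)^n$ is simultaneously an algebra and a coalgebra filtration, and the associated graded $S:=\gr R$ lies in $\gbhH$ with $S(1)=U$. Each $R_n$ is a quotient of $\bigoplus_{k\le n}U^{\otimes k}$, hence semisimple in $\ydH$ by the hypothesis on $U$; Lemma~\ref{le:ydiso2} then yields $R\simeq S$ in $\ydH$, and each $S(n)$ is semisimple as a quotient of $U^{\otimes n}$. It therefore suffices to prove the theorem with $S$ in place of $R$.

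Next, I set $S^{(0)}=S$ and $S^{(k)}=F_{k+1}(S^{(k-1)})$ for $k\ge 1$. A routine induction shows all tensor powers of $S^{(k)}(1)=P_{k+1}(S^{(k-1)})$ remain semisimple, and each homogeneous component $S^{(k)}(n)$ is semisimple, so Corollary~\ref{co:FRsemi} provides $\ydH$-isomorphisms $\Phi_k:S^{(k-1)}\to S^{(k)}$ satisfying $\Phi_k(S^{(k-1)}(n))\subset\bigoplus_{j\le n}S^{(k)}(j)$; in particular $S^{(k)}\simeq R$ in $\ydH$ for every $k$. Lemma~\ref{le:noprim} ensures $S^{(k)}$ has no nonzero primitive elements in degrees $2,\ldots,k+1$, so $\NI_{S^{(k)}}$ is concentrated in degrees $\ge k+2$. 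The essential compatibility is provided by Lemma~\ref{le:moregrR}, which supplies splittings $\iota_{S^{(k)}}:\NA(S^{(k)}(1))\to S^{(k)}$ fitting into the commutative diagram~\eqref{eq:cdR} with the canonical inclusion $\NA(\io{S^{(k-1)}(1)}{S^{(k)}(1)})$.

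Then the chain $U=S^{(0)}(1)\subset S^{(1)}(1)\subset\cdots$ has a direct limit $V$ in $\ydH$ with $U\subset V$ and semisimple tensor powers. Transported to $S$ via the iterated $\Phi_k^{-1}$'s, the splittings $\iota_{S^{(k)}}$ form a compatible directed system by~\eqref{eq:cdR}, whose colimit is an embedding $\NA(V)\hookrightarrow S$ in $\ydH$. For surjectivity, given $x\in S(n)$, iterating the degree bound on $\Phi_k$ gives $\Phi_k\circ\cdots\circ\Phi_1(x)\in\bigoplus_{j\le n}S^{(k)}(j)$; once $k\ge n-1$ this subspace intersects $\NI_{S^{(k)}}$ trivially, so $\Phi_k\circ\cdots\circ\Phi_1(x)$ lies in $\iota_{S^{(k)}}(\NA(S^{(k)}(1)))$ via the decomposition $S^{(k)}=\iota_{S^{(k)}}(\NA(S^{(k)}(1)))\oplus\NI_{S^{(k)}}$, and pulling back places $x$ in the image of $\NA(V)$. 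Hence $R\simeq S\simeq\NA(V)$ in $\ydH$. The main obstacle is precisely this colimit argument: one must carefully organize the $\ydH$-isomorphisms $\Phi_k$ and the splittings into a coherent directed system and verify surjectivity of $\NA(V)\hookrightarrow S$, the degree-shift property of $\Phi_k$ together with the commutativity of~\eqref{eq:cdR} being the essential inputs.
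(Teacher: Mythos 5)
Your proof follows the paper's overall strategy faithfully — reduce to the graded case via Lemma~\ref{le:ydiso2}, iterate the functors $F_i$ with increasing index, use Lemma~\ref{le:moregrR} to get the isomorphisms $\Phi_k$ and the compatible splittings making diagram~\eqref{eq:cdR} commute, and take a colimit to build $V$ and the embedding $\NA(V)\hookrightarrow S$. All of that matches the paper. But the surjectivity argument has a genuine gap.

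You assert that Lemma~\ref{le:noprim} gives that $S^{(k)}$ has no nonzero primitives in degrees $2,\dots,k+1$, and hence that $\NI_{S^{(k)}}$ is concentrated in degrees $\ge k+2$, so that $\Phi_k\cdots\Phi_1(x)$ lands in $\iota_{S^{(k)}}(\NA(S^{(k)}(1)))$ once $k$ is large. But Lemma~\ref{le:noprim} only controls the primitives of $\lambda_i(R)$, which by Remark~\ref{re:Fi} is the \emph{subalgebra} $\fie\langle R(1)\rangle$ of $F_i(R)$ generated by $R(1)$, not all of $F_i(R)$. By Proposition~\ref{pr:FiR}, $F_i(R)=\fie\langle R(1)\rangle\oplus(P'_i(R))$, and the second summand can contribute new low-degree primitives. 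Concretely, if $p,q\in P'_i(R)$ satisfy $(\id+c)(p\ot q)=0$ and $pq\not=0$ in $F_i(R)(2)$, then $pq$ is primitive of degree $2$ in $F_i(R)$ regardless of how few primitives $R$ itself has. For instance, take $H$ the group algebra of $\ndZ^2$, $V=\fie x\oplus\fie y$ of diagonal type with $q_{11}=q_{22}=1$ and $q_{12}q_{21}=-1$, and $R=T(V)$: then $p=[x,y]_c\in P'_2(R)$ and $p^2$ is a nonzero primitive of degree $2$ in $S^{(1)}=F_2(T(V))$, contradicting your claim for $k=1$. Thus $\NI_{S^{(k)}}$ need not vanish in low degree, and the decomposition $S^{(k)}=\iota_{S^{(k)}}(\NA(S^{(k)}(1)))\oplus\NI_{S^{(k)}}$ does not immediately place $\Phi_k\cdots\Phi_1(x)$ in the image of $\iota_{S^{(k)}}$.

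The paper avoids this by not claiming that the Nichols ideal vanishes in low degree; instead it uses Proposition~\ref{pr:lambda}, which states that the \emph{kernel} of the composite map $\lambda=\lambda_{i_m}\cdots\lambda_{i_2}$ agrees with $\NI_R$ in degrees $\le m$. Combined with Lemma~\ref{le:moregrR}(2) (which identifies $\gr\Phi$ with $\lambda_i$), this shows that an element of $\NI_{S^{(k)}}$ of degree $j$ is mapped by sufficiently many further $\Phi$'s into degrees $\le j-1$. The surjectivity is then proved by induction on the eventual stabilized degree, subtracting off $\iota\pi$ of the element and reducing the degree of the discrepancy step by step. Your argument needs to be replaced by this degree-reduction induction; the simple ``once $k$ is large the ideal is concentrated high up'' shortcut fails.
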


\begin{proof}
  It is sufficient to prove the theorem for $R\in \gbhH $ and $U=R(1)$.
  Indeed, define $\gr \,R=\oplus _{n=0}^\infty R(n)$ as in
  Lemma~\ref{le:ydiso2}. Since $U\subset P(R)$, we get $\copr : R(n)\to
  \oplus _{k=0}^n R(k)\otimes R(n-k)$, and hence $\gr \,R\in \gbhH $.
  Moreover, $\gr \,R\simeq R$ in $\ydH $ by Lemma~\ref{le:ydiso2}.
  
  Define inductively $R\^n \in \gbhH $ for all $n\in
  \ndN $ by letting $R\^1=R$ and $R\^n=F_{n-1}(R\^{n-1})$ for all
  $n\in \ndN _{\ge 2}$, and let $P_n=P_n(R\^n)$, $P'_n=P'_n(R\^n)$ for all
  $n\in \ndN $. 

  Using Corollary~\ref{co:FRsemi} and the formula $R\^n (k)=R\^n (1)^k$ for
  all $k,n\in \ndN $,
  we may apply Lemma~\ref{le:moregrR}. Hence for each
  $n\in \ndN $ there exists an isomorphism $\Phi _n: R\^n\to
  R\^{n+1}$ which satisfies the properties in
  Lemma~\ref{le:moregrR}(1)--(3). In particular, for each $n\in \ndN _{\ge 2}$
  the diagrams
  \begin{equation}
    \begin{CD}
      \NA (P_{n-1}) & @>\iota _{R\^n}>> & R\^n\\
      @V\NA (\io{P_{n-1}}{P_n})VV & & @VV\Phi _n V \\
      \NA (P_n) & @>>\iota _{R\^{n+1}}> & R\^{n+1},
    \end{CD}\qquad
    \begin{CD}
      \NA (P_{n-1}) & @>\Phi _1^{-1}\cdots \Phi _{n-1}^{-1}\iota _{R\^n}>> & R\\
      @V\NA (\io{P_{n-1}}{P_n})VV & & @| \\
      \NA (P_n) & @>>\Phi _1^{-1}\cdots \Phi _n^{-1}\iota _{R\^{n+1}}> & R
    \end{CD}
    \label{eq:commtri}
  \end{equation}
  commute.
  Hence there is a map $$\Psi :\NA (\cup _{n=1}^\infty P_n)
  =\cup _{n=1}^\infty \NA (P_n)\to R$$
  in $\ydH $, and since $\iota _{R\^n}$ is injective for all $n\in \ndN $,
  the map $\Psi $ is injective.
  Let $V=\cup _{n=0}^\infty P_n\in \ydH $.
  Since for all $n\in \ndN $ all tensor powers of $P_n$ are semisimple in
  $\ydH $, all tensor powers of $V$ are semisimple in $\ydH $. We are left to
  show that $\Psi $ is surjective.

  For all $n\in \ndN $ let $\varphi _n=\Phi _{n-1}\cdots \Phi _2:
  R\^2=R\to R\^n$.
  By Lemma~\ref{le:moregrR}, for all $x\in R$ there exist $k\in \ndN _0$
  and $m\in \ndN _{\ge 2}$ such that
  \begin{gather*}
    \varphi _m(x)\in \oplus _{n=0}^k R\^m (n),\quad
    \varphi _t(x)\notin \oplus _{n=0}^{k-1} R\^t (n)
  \end{gather*}
  for all $t\ge m$.
  We prove by induction on $k$ that $x\in \Psi (\NA (V))$.
  First, let $k=0$. Then $x\in \fie = \NA ^0(V)$, and $\Psi (x)=x$
  by definition of $\Psi $.

  Let now $k\ge 1$. Since
  $$\varphi _m(x)-\iota _{R\^m}\pi _{R\^m}\varphi _m(x)\in
  R\^m(k)\cap \ker \pi _{R\^m} =R\^m(k)\cap \NI _{R\^m},$$
  Prop.~\ref{pr:lambda} and Lemma~\ref{le:moregrR}(2)
  imply that
  $$\Phi _{m+k-2}\cdots \Phi _{m+1}\Phi _m(\varphi _m(x)
  -\iota _{R\^m}\pi _{R\^m}\varphi _m(x))
  \in \oplus _{n=0}^{k-1}R\^{m+k-1}(n).$$
  Now $\pi _{R\^m}\varphi _m(x)\in \NA (V)$ by definition of $\NA (V)$, and
  hence
  $$\varphi _m^{-1}\iota _{R\^m}\pi _{R\^m}\varphi _m(x)\in \Psi (\NA (V)).$$
  Further,
  $x-\varphi _m^{-1}\iota _{R\^m}\pi _{R\^m}\varphi _m(x)\in \Psi (\NA (V))$
  by induction hypothesis on $k$. Thus $x\in \Psi (\NA (V))$.
\end{proof}

\begin{remar}
  Assume that $R\in \ydHf $ is a connected braided Hopf algebra and a
  semisimple object in $\ydHf $.
  Then,
  using the algorithm in \cite{a-AndrSchn02} below Cor.~7.8,
  it can be shown that $R\simeq \NA (V)$ in $\ydHf $
  for some $V\in \ydHf $.
\end{remar}

\section{Decompositions of Nichols algebras into tensor products}

Our main result in this section is Thm.~\ref{th:BVirred} giving a
decomposition of a large class of Nichols algebras.
For the proof we use Thm.~\ref{th:RisNA} and a result
from \cite{a-HeckGran07} which is based on the ideas of Kharchenko
on PBW bases of braided Hopf algebras with diagonal braiding
\cite{a-Khar99}.

%For the formulation and proof of the theorem we have to introduce
%some notation.

Let $\ydHtf $ denote
the category of $\ndZ ^\theta $-graded Yetter-Drinfeld modules over $H$ having
\fd{} homogeneous components.

\begin{remar} \label{re:Heq}
  Let $H'$ denote the Hopf algebra $\fie \ndZ ^\theta \otimes H$.
  For any $\ndZ ^\theta $-graded object $V\in \ydH $ let $V'\in \ydHp $
  such that $V'=V$ as an $H$-module, $\gamma v=v$ for all $v\in V'$ and
  $\gamma \in \ndZ ^\theta $, and the left coaction on $V'$ is determined by
  $\delta _{V'}(v)=\gamma v_{(-1)}\otimes v_{(0)}$
  for all $v\in V_\gamma \subset V'$,
  where $v_{(-1)}\ot v_{(0)}$ is the coaction of $H$ on $v\in V$.
  This way the category of $\ndZ ^\theta $-graded objects in $\ydH $ is
  equivalent to the full subcategory of $\ydHp $ consisting of those
  Yetter-Drinfeld modules, for which the action of $\fie \ndZ ^\theta $ is
  trivial.
\end{remar}

Let $\cA =\{\al _1,\ldots ,\al _\theta \}$ be a fixed basis of $\ndZ ^\theta $.
For $\gamma =\sum _{i=1}^\theta n_i\al _i\in \ndZ ^\theta $ let
$|\gamma |_\cA =\sum _{i=1}^\theta n_i$.
For any $\ndZ ^\theta $-graded object $X$ and any $k\in \ndZ $ let
\[ X(k)=\mathop{\oplus }_{\gamma \in \ndZ ^\theta ,|\gamma |_\cA =k}
X_\gamma .\]
Clearly, if $X\in \ydHtf $ such that $X_\gamma =0$ for $\gamma \in
\ndZ ^\theta \setminus \ndN _0\cA $, where
$\ndN _0\cA =\sum _{i=1}^\theta \ndN _0\al _i$,
then $X(k)=0$ for $k<0$ and $X(k)$ is
\fd{} for all $k\ge 0$.

Let $(I,\le )$ be a totally ordered index set, and for each $i\in I$ let
$X_i$ be a connected braided Hopf algebra in $\ydH $.
Let $1$ denote the unit of (each) $X_i$.
For any finite subset $J=\{j_1<j_2<\ldots <j_t\}\subset I$
let $X_J=X_{j_1}\ot X_{j_2}\ot \cdots \ot X_{j_t}$.
The family of objects $X_J$ forms a direct system
with respect to the inclusion in the following sense.
If $J\subset K\subset I$ are finite subsets, where $J=\{j_1<j_2<\ldots <j_t\}$,
$K=\{k_1<k_2<\ldots <k_s\}$, then $X_J\subset X_K$ via the
embedding
\begin{align*}
  X_J\to X_K,\qquad
  x_{j_1}\ot x_{j_2}\ot \cdots \ot x_{j_t}\mapsto
  y_{k_1}\ot y_{k_2}\ot \cdots \ot y_{k_s},
\end{align*}
where for all $l$ with $1\le l\le s$ let
\begin{align*}
  y_{k_l}=
  \begin{cases}
    x_{j_m} & \text{if $k_l=j_m$ for some $m\le t$,}\\
    1 & \text{otherwise.}
  \end{cases}
\end{align*}
We write $\otimes _{i\in I} X_i$ for the limit of this direct system.

\begin{remar}
  If $X_i$ is $\ndZ ^\theta $-graded for all $i\in I$, then
  $\otimes _{i\in I} X_i$ is $\ndZ ^\theta $-graded. If
  $(X_i)_0=\fie $ and $X_i=\oplus _{\gamma \in \ndN _0\cA }(X_i)_\gamma $ for
  all $i\in I$,
  then
  $\otimes _{i\in I} X_i\in \ydHtf $ if and only if
  $X_i\in \ydHtf $ for all $i\in I$ and for all $\gamma \in \ndN _0\cA
  \setminus \{0\}$
  the set $\{i\in I\,|\,(X_i)_\gamma \not=0\}$ is finite.
\end{remar}

\begin{theor}\label{th:GH}
  Let $R$ be a connected braided Hopf algebra in $\ydH $, $\kappa \in \ndN $,
  and let $V_1,\ldots ,V_\kappa \in \ydH $ be
  subobjects of $P(R)$, such that $R$ is generated by $\oplus _{i=1}^\kappa
  V_i$. Let $\{e_1,\ldots ,e_\kappa \}$ be the standard basis of
  $\ndZ ^\kappa $,
  and assume that there is an $\ndN _0^\kappa $-grading of $R$ such that
  $\deg V_i=e_i$ for all $i\in \{1,\ldots ,\kappa \}$.
  If for all $\ndN _0^\kappa $-graded subalgebras $T\subset R$ and all $\ndN
  _0^\kappa $-graded ideals $T'$ of $T$ there is an $\ndN _0^\kappa $-graded
  splitting $T/T'\to R$ in $\ydH $,
  then there exists a totally ordered index set
  $(L,\le )$ and a family $(R_l)_{l\in L}$ of connected
  $\ndN _0^\kappa $-graded
  braided Hopf algebras $R_l\in \ydH $, such that
  \begin{enumerate}
    \item $\{1,\ldots ,\kappa \}\subset L$, and for $l\in \{1,\ldots
      ,\kappa \}$,
      $R_l$ is generated by an $\ndN _0^\kappa $-homogeneous subspace
      $E_l\subset R_l$ of degree $e_l$ such that $E_l\simeq V_l$ in $\ydH $,
    \item for all $l\in L\setminus \{1,\ldots ,\kappa \}$,
      $R_l$ is generated by an $\ndN _0^\kappa $-homogeneous subspace $E_l\in
      \ydH $ of degree $\beta _l=\sum _{j=1}^\kappa n_{lj}e_j$,
      where at least two of the coefficients $n_{lj}$ are nonzero,
    \item $R\simeq \ot _{l\in L}R_l$ as $\ndN _0^\kappa $-graded objects
      in $\ydH $.
  \end{enumerate}
\end{theor}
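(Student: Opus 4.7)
The plan is to invoke the PBW-type theorem of Gra\~na and the first author \cite{a-HeckGran07}, a braided categorical extension of Kharchenko's theorem \cite{a-Khar99}, applied to the $\ndN _0^\kappa $-graded braided Hopf algebra $R$. The splitting hypothesis on $R$ is precisely what makes this machinery available in the Yetter-Drinfeld setting, and the index set $L$ plus the family $(R_l,E_l)_{l\in L}$ will be produced by transfinite induction over a suitable total order on multi-degrees.

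First I would choose a total order $\preceq $ on $\ndN _0^\kappa $ compatible with addition and refining the total degree $\gamma =\sum n_i e_i \mapsto \sum n_i$, e.g.\ lexicographic. The base step puts $\{1,\ldots ,\kappa \}\subset L$, sets $E_l:=V_l$ (of degree $e_l$), and lets $R_l$ be the subalgebra of $R$ generated by $V_l$; since $V_l\subset P(R)$, each $R_l$ is a connected $\ndN _0^\kappa $-graded braided Hopf subalgebra of $R$, so condition (1) holds, and the multiplication map $R_1\ot \cdots \ot R_\kappa \to R$ is a morphism of $\ndN _0^\kappa $-graded objects in $\ydH $. For the inductive step at some index $\mu $, let $T_{<\mu }\subset R$ be the image of the multiplication map $\ot _{l<\mu }R_l\to R$; this is an $\ndN _0^\kappa $-graded subalgebra of $R$. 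Let $\beta _\mu \in \ndN _0^\kappa $ be the $\preceq $-smallest multi-degree at which $T_{<\mu }\to R$ fails to be surjective. By the base step and the inductive construction, $\beta _\mu \notin \{e_1,\ldots ,e_\kappa \}$, so it has at least two nonzero entries. Applying the splitting hypothesis to $T_{<\mu }$ and a suitable $\ndN _0^\kappa $-graded ideal then produces an $\ndN _0^\kappa $-homogeneous Yetter-Drinfeld subobject $E_\mu \subset R_{\beta _\mu }$ complementary to $(T_{<\mu })_{\beta _\mu }$. Let $R_\mu $ be the subalgebra of $R$ generated by $E_\mu $, endowed with the connected $\ndN _0^\kappa $-graded braided Hopf algebra structure provided by the PBW machinery of \cite{a-HeckGran07}.

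The main obstacle is verifying (3), i.e.\ that the multiplication map $\ot _{l\in L}R_l\to R$ is an isomorphism of $\ndN _0^\kappa $-graded objects in $\ydH $. Surjectivity follows by construction: at every multi-degree $\gamma $ the inductive step guarantees full coverage of $R_\gamma $ before $\gamma $ is surpassed in $\preceq $. Injectivity is the delicate PBW statement, and is exactly where \cite{a-HeckGran07} supplies the essential input; the splitting hypothesis at every stage prevents nontrivial relations among distinct $R_l$-factors, so the tensor product is genuinely free in the $\ndN _0^\kappa $-graded Yetter-Drinfeld sense. A secondary technical point is well-definedness of the transfinite induction: at each multi-degree $\gamma $ only finitely many new generators can appear (bounded by $\dim R_\gamma $ in the \fd{} case), so $L$ is at most countable and the total order on it is inherited in a well-defined manner from $\preceq $.
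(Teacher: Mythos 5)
The paper's proof of Theorem~\ref{th:GH} is, deliberately, a citation: it reads off the statement from \cite[Thm.\,4.12]{a-HeckGran07}, identifies the index set $L$ with a subset of Lyndon words, observes that the splitting hypothesis lets one choose the connecting maps $\iota_u$ there to be $\ndZ^\kappa$-homogeneous in $\ydH$, and notes that the whole construction (including the direct limit) is then $\ndN_0^\kappa$-graded because the $R_l$ arise as $\ndN_0^\kappa$-graded subquotients of $R$. You also invoke \cite{a-HeckGran07}, but then you substitute your own ``transfinite induction over multi-degrees'' construction for the one in the reference, and that construction does not work.

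The concrete problems are as follows. First, you build $R_\mu$ as the subalgebra of $R$ generated by a homogeneous complement $E_\mu$ of $(T_{<\mu})_{\beta_\mu}$ in $R_{\beta_\mu}$, but such a subalgebra is not a braided Hopf subalgebra of $R$ in general: the coproduct of $R$ applied to $E_\mu$ lands in $R\ot R$, not in $R_\mu\ot R_\mu$, so $R_\mu$ has no reason to carry the required connected braided Hopf algebra structure. In the cited theorem the $R_l$ are \emph{subquotients} of $R$ (the paper's proof explicitly says this), not subalgebras; this distinction is essential and your construction collapses it. Second, the set you call $T_{<\mu}$, the image of the multiplication map $\ot_{l<\mu} R_l\to R$, is not obviously a subalgebra of $R$: whether ordered products of elements from the $R_l$ are closed under multiplication is precisely the nontrivial combinatorial content of the PBW theorem, resting on the structure of braided commutators of Lyndon hyperletters, and cannot be assumed. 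Third, choosing a single complementary $E_\mu$ per ``bad'' degree $\beta_\mu$ does not match the Lyndon-word indexing: several $R_l$ can share a degree, and the specific generators are iterated braided brackets, not arbitrary homogeneous complements, so the family you build need not agree with the one whose PBW property \cite{a-HeckGran07} actually proves. Finally, you acknowledge that injectivity of $\ot_{l\in L}R_l\to R$ is ``exactly where \cite{a-HeckGran07} supplies the essential input,'' but since the family $(R_l,E_l)$ you produced is not the one in that reference, you cannot import its injectivity statement; this makes the argument circular. The correct and economical route, which is what the paper does, is to cite \cite[Thm.\,4.12]{a-HeckGran07} directly and only verify that the splitting hypothesis makes all its ingredients $\ndN_0^\kappa$-graded morphisms in $\ydH$.
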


\begin{proof}
  See \cite[Thm.\,4.12]{a-HeckGran07}. The index set $L$ corresponds to an
  appropriate subset of the set of Lyndon words in
  \cite[Thm.\,4.12]{a-HeckGran07}.  By assumption,
  the maps $\iota _u$ in \cite[Thm.\,4.12]{a-HeckGran07} can be chosen to be
  $\ndZ ^\kappa $-homogeneous morphisms in $\ydH $.
  Further, following the construction of $R_l$ as
  subquotients of $R$ and using the assumption that $R$ is
  $\ndN _0^\kappa $-graded,
  it is clear that all objects and morphisms in the theorem,
  including the direct limit $\ot _{l\in L}$,
  are $\ndN _0^\kappa $-graded.
\end{proof}

The main technical tool in this section is the following lemma.

\begin{lemma}
  Let $0\not=V\in \ydHtf $ such that $V=\oplus _{\gamma \in \ndN _0\cA
  \setminus \{0\}} V_\gamma $,
  and all finite tensor powers of $V$ are semisimple in $\ydHtf $.
  Let $d_V=\min \{d\in \ndN \,|\,V(d)\not=0\}$,
  $\kappa \in \ndN _{\ge 2}$ such that
  $V(d_V)=\oplus _{i=1}^{\kappa -1}V_i$ is a decomposition into
  irreducible objects in $\ydHtf $,
  and let $V_{\kappa }=\oplus _{n=d_V+1}^\infty V(n)$.
  Then there is a totally ordered index set $(L,\le )$
  and a family $(W_l)_{l\in L}$, $0\not=W_l\in \ydHtf $ for all $l\in L$,
  such that
  \begin{enumerate}
    \item $\{1,\ldots ,\kappa {-}1\}\subset L$ and
      $W_l\simeq V_l$ in $\ydHtf $ for $l\in \{1,\ldots ,\kappa {-}1\}$,
    \item if $l\in L\setminus \{1,2,\ldots ,\kappa -1\}$, then
      $W_l=\oplus _{n=d_V+1}^\infty W_l(n)\in \ydHtf $,
    \item $\NA (V)\simeq \ot _{l\in L}\NA (W_l)$ as objects in $\ydHtf $.
  \end{enumerate}
  \label{le:BVdec}
\end{lemma}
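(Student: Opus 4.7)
The plan is to apply Theorem~\ref{th:GH} to $R=\NA (V)$ with an auxiliary $\ndN _0^\kappa $-grading, and then to identify each PBW-block as a Nichols algebra by means of Theorem~\ref{th:RisNA}.

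First, I would equip $\NA (V)$ with the $\ndN _0^\kappa $-grading determined on generators by $\deg V_i=e_i$ for $1\le i\le \kappa -1$ and $\deg V_\kappa =e_\kappa $, extended multiplicatively. This is well-defined because $\NI _{T(V)}$ is the sum of the kernels of the quantum symmetrizers $S_m$, which are $\ndN _0^\kappa $-homogeneous with respect to any grading of $V^{\otimes m}$ induced by a direct-sum decomposition of $V$. Both the original $\ndZ ^\theta $-grading and this auxiliary $\ndN _0^\kappa $-grading are preserved throughout. Since every finite tensor power of $V$ is semisimple in $\ydHtf$ and every $\ndN _0^\kappa $-homogeneous component of $\NA (V)$ is a subquotient of some $V^{\otimes m}$, such components are semisimple in $\ydH $; hence every $\ndN _0^\kappa $-graded quotient of an $\ndN _0^\kappa $-graded subalgebra of $\NA (V)$ admits a graded splitting, verifying the hypothesis of Theorem~\ref{th:GH}. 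That theorem then yields a totally ordered set $(L,\le )$ with $\{1,\dots ,\kappa \}\subset L$ and an isomorphism
\[
\NA (V)\simeq \ot _{l\in L}R_l
\]
of $\ndN _0^\kappa $- and $\ndZ ^\theta $-graded objects in $\ydH $, where each $R_l$ is a connected braided Hopf algebra generated by a subspace $E_l$ of $\ndN _0^\kappa $-degree $e_l$ (for $l\le \kappa $, with $E_l\simeq V_l$) or of degree $\beta _l$ with at least two nonzero components (for $l>\kappa $).

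Next, I would identify each $R_l$ as a Nichols algebra. Because $R_l$ is connected and generated by $E_l$ in its minimal positive $\ndN _0^\kappa $-degree, that degree cannot be written as a sum of two nonzero $\ndN _0^\kappa $-degrees occurring in $R_l$, so $E_l\subset P(R_l)$. Since $E_l$ is a subquotient of some $V^{\otimes m}$, all its tensor powers are semisimple in $\ydH $, and Theorem~\ref{th:RisNA} supplies $W_l\supset E_l$ with $R_l\simeq \NA (W_l)$ in $\ydH $. Tracking the $\ndZ ^\theta $-grading through the iterative construction in the proof of that theorem shows that the isomorphism lies in $\ydHtf$, establishing (3).

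Finally, I would pin down each $W_l$ by degree bookkeeping. For $l\in L\setminus \{1,\dots ,\kappa -1\}$: if $l=\kappa $, then every positive $\ndN _0^\kappa $-degree of $R_l$ is a multiple of $e_\kappa $, mapping to $\ndZ ^\theta $-weights in the support of $V_\kappa $, all of $|\cdot |_\cA $-value $\ge d_V+1$; if $l>\kappa $, then $\beta _l$ has at least two nonzero entries, each contributing $\ge d_V$ to $|\cdot |_\cA $, so $|\beta _l|_\cA \ge 2d_V\ge d_V+1$. Either way $W_l$ is concentrated in $\ndZ ^\theta $-weights $\ge d_V+1$, giving (2). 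For $l\le \kappa -1$, the inclusion $V_l\hookrightarrow W_l=\NA (W_l)(1)$ sits in the degree-$1$ component of the natural Nichols grading on $\NA (W_l)=R_l$, and $V_l$ generates $R_l$ as an algebra; hence the subalgebra of $\NA (W_l)$ generated by $V_l$—which is homogeneous in the natural Nichols grading with degree-$1$ component exactly $V_l$—fills all of $\NA (W_l)$, and comparing degree-$1$ components yields $V_l=W_l$, giving (1). The step requiring the most care is the compatibility of the two gradings throughout: one must verify in particular that the identification $R_l\simeq \NA (W_l)$ produced by Theorem~\ref{th:RisNA} respects the inherited $\ndZ ^\theta $-grading, so that the final decomposition takes place in $\ydHtf$.
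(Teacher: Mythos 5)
Your overall strategy coincides with the paper's: impose the auxiliary $\ndN_0^\kappa$-grading on $\NA(V)$ with $\deg V_i=e_i$, apply Theorem~\ref{th:GH} to obtain the tensor decomposition $\NA(V)\simeq\ot_{l\in L}R_l$, then use Theorem~\ref{th:RisNA} to replace each $R_l$ by a Nichols algebra $\NA(W_l)$, and finally read off the degree information. Your verification that $E_l\subset P(R_l)$ and your degree bookkeeping for claim~(2) are both sound. (The paper avoids your ``tracking the $\ndZ^\theta$-grading'' step by first passing, via Rem.~\ref{re:Heq}, to the Hopf algebra $H'=\fie\ndZ^\theta\ot H$, so that the $\ndZ^\theta$-grading becomes part of the comodule structure and is automatically preserved by all morphisms in $\ydHp$; this is worth being precise about, but it is not a gap in itself.)

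The genuine gap is in your argument for claim~(1). You write that $V_l\simeq E_l$ generates $R_l$ as an algebra, identify $R_l$ with $\NA(W_l)$, and conclude that the subalgebra of $\NA(W_l)$ generated by $V_l$ is all of $\NA(W_l)$, whence $V_l=W_l$ by comparing degree-$1$ parts. But Theorem~\ref{th:RisNA} only produces an isomorphism $R_l\simeq\NA(W_l)$ \emph{as objects in $\ydH$}, not as algebras: the map $\Psi$ in its proof is assembled from $\ydH$-splittings $\iota_{R\^n}$, which are not multiplicative. So the statement ``$V_l$ generates $R_l$'' does not transport to ``$V_l$ generates $\NA(W_l)$'' under that isomorphism, and indeed if $W_l$ had a nonzero component $W_{l,2e_l}$, the subalgebra of $\NA(W_l)$ generated by $E_l$ would be the proper subalgebra $\NA(E_l)$. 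To close the gap one has to argue, as the paper does, via the $\ndN_0^\kappa$-grading: the isomorphism $\NA(V)\simeq\ot_{l'\in L}\NA(W_{l'})$ restricted to $\ndN_0^\kappa$-degrees in $\ndN_0 e_l$ gives $\NA(V_l)\simeq\NA(W_l)$ as $\ndN_0^\kappa$-graded objects (here one uses that the $\ndN_0^\kappa$-degrees occurring in $\NA(W_{l'})$ are multiples of $\udeg\,E_{l'}$, which for $l'\ne l$ never lie in $\ndN e_l$); comparing the finite-dimensional homogeneous components degree by degree then forces $W_{l,e_l}\simeq V_l$ and $W_{l,ne_l}=0$ for $n\ge2$, hence $W_l\simeq V_l$.
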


\begin{proof}
  Let $H'$ be as in Rem.~\ref{re:Heq}.
  We will apply Thm.~\ref{th:GH}
  to objects in $\ydHp $ instead of $\ydH $.
  
  Recall that $\NA (V)$ is a connected braided Hopf algebra
  in $\ydHtf $ generated by $V\subset P(\NA (V))$.
  Let $\{e_i\,|\,1\le i\le \kappa \}$ be the standard basis of
  $\ndZ ^{\kappa }$. The assignment
  \begin{align*}
    \udeg \,v=e_i,\qquad \text{where $v\in V_i$,}
  \end{align*}
  defines an $\ndN _0^{\kappa }$-grading on $V$, and this extends to an $\ndN
  _0^{\kappa }$-grading of $\NA (V)$ which is compatible with the $\ndZ
  ^\theta $-grading: $\ndZ ^\theta $-homogeneous components of a $\ndZ ^\kappa
  $-homogeneous element are $\ndZ ^\kappa $-homogeneous and \textit{vice
  versa}. Thus we may regard $\NA (V)$ as a $\ndZ ^\kappa $-graded object in
  $\ydHp $.
  Apply Thm.~\ref{th:GH} to $\NA (V)\in \ydHp $. This is possible, since the
  existence of the splittings $T/T'\to R$ in $\ydHp $ follows from the
  assumption that finite tensor powers of $V$ are semisimple in $\ydHp $.
  We conclude that there exists a totally ordered index set $(L,\le )$
  and a family $(R_l)_{l\in L}$ of connected $\ndZ ^{\theta +\kappa }$-graded
  braided Hopf algebras $R_l\in \ydH $, such that Thm.~\ref{th:GH}(1)--(3)
  hold.
  Since $E_l$ is $\ndN _0^\kappa $-homogeneous for all $l\in L$,
  the degrees of the $\ndN _0^\kappa $-homogeneous components of $R_l$ are of
  the form $n\,\udeg \,E_l$, where $n\in \ndN _0$.
  Further,
  Thm.~\ref{th:RisNA} implies
  (use Rem.~\ref{re:Heq} with $\kappa +\theta $ instead of $\kappa $)
  that for all $l\in L$ there exists
  an $\ndN _0^\kappa $-graded
  object $W_l\in \ydHp $ such that $E_l\subset W_l$ and
  $R_l\simeq \NA (W_l)$ in $\ydHp $.
  By the above,
  \begin{itemize}
    \item[($*$)]
      the degrees of the $\ndN _0^\kappa $-homogeneous components of
      $W_l$ are multiples of $\udeg \,E_l$.
  \end{itemize}
  Then Thm.~\ref{th:GH}(3)
  and relation $\NA (V)\in \ydHtf $
  imply that
  \begin{itemize}
    \item[($**$)]
      $\NA (V)\simeq \ot _{l\in L}\NA (W_l)$ in $\ydHtkf $ and in
      $\ydHtf $,
  \end{itemize}
  and hence Claim~(3) of the lemma holds.

  Let first $l\in \{1,2,\ldots ,\kappa -1\}$. Then
  \[ \NA (V)=\NA (V_l\oplus \tilde{V}_l)=\NA (V_l)\oplus (\tilde{V}_l),\]
  where
  $\tilde{V}_l=\oplus _{1\le j\le \kappa ,j\not=l}V_j$ and
  $(\tilde{V}_l)\subset \NA (V)$
  is the ideal generated by $\tilde{V}_l$. Thus
  $$\NA (V_l)=\oplus _{n\in \ndN _0} \{v\in \NA (V)\,|\,\udeg \,v=
  ne_l\}.$$
  Further, $\NA (W_l)$ is a connected braided Hopf algebra generated by
  $W_l$, and $V_l\simeq E_l\subset W_l$.
  By ($*$), ($**$), and since
  the $\ndN _0^{\kappa }$-homogeneous
  components of $\NA (V_l)$ are \fd{}, it follows that $V_l\simeq W_l$
  in $\ydHtkf $. This gives (1).

  Let now $l\in L\setminus \{1,2,\ldots ,\kappa -1\}$ and $W_{l\beta }$ an
  $\ndN _0^{\kappa }$-homogeneous component of $W_l$. Then either $\beta =e
  _{\kappa }$ or $\beta =\sum _{j=1}^{\kappa }n_j e_j$ with $\sum
  _{j=1}^{\kappa }n_j\ge 2$. In the first case ($**$) implies that
  $W_{l\beta }\subset V_{\kappa }$, and hence $d_{W_l}\ge d_V+1$.
  In the second case $d_{W_l}\ge 2d_V\ge d_V+1$, since $d_V\ge 1$.
  Hence Claim~(2) is proven.
\end{proof}

\begin{theor}\label{th:BVirred}
  Let $\theta \in \ndN $ and let $V_1,\ldots ,V_\theta $
  be \fd{} irreducible objects in $\ydH $. Let $V=\oplus _{i=1}^\theta V_i$,
  and assume that finite tensor powers of $V$ are semisimple in $\ydH $.
  Define a $\ndZ ^\theta $-grading of $\NA (V)$ such that $\deg V_i=\al _i$
  for all $i$.

  (1)
  There exists a totally ordered index set $(L,\le )$ and a family
  $(W_l)_{l\in L}$ of irreducible objects $W_l\in \ydHtf $ with
  $\deg W_l\in \ndN _0\cA $ for all $l$, and
  \begin{align*}
    \NA (V)\simeq \ot _{l\in L}\NA (W_l)
    \quad \text{in $\ydHtf $.}
  \end{align*}

  (2)
  If $\NA (V)\simeq \ot _{l\in L}\NA (W_l)$ and
  $\NA (V)\simeq \ot _{l'\in L'}\NA (W'_{l'})$
  for index sets $(L,\le )$, $(L',\le )$, and families
  $(W_l)_{l\in L}$, $(W'_{l'})_{l'\in L'}$ as in (1),
  then there exists a bijection $\varphi :L\to L'$ such that
  $W_l\simeq W'_{\varphi (l)}$ in $\ydHtf $
  for all $l\in L$.
\end{theor}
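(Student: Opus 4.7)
The plan is to prove (1) by iterating Lemma~\ref{le:BVdec}, and to prove (2) by induction on $|\gamma |_\cA $ together with a Krull--Schmidt argument in each homogeneous component.

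For (1), I first apply Lemma~\ref{le:BVdec} to $V$ itself, using the $\ndN _0$-grading in which $V=V(1)$ so that $d_V=1$ and $V_\kappa =0$. This yields a decomposition $\NA (V)\simeq \ot _{l\in L^{(1)}}\NA (W^{(1)}_l)$ in $\ydHtf $ in which the pieces indexed by $\{1,\dots ,\theta \}$ are precisely the simple summands $V_1,\dots ,V_\theta $, while every other piece lives in $\ndZ ^\theta $-degrees $\gamma $ with $|\gamma |_\cA \ge 2$. These remaining $W^{(1)}_l$ need not be irreducible, but each arises as a subquotient of $\NA (V)$, so its $\ndZ ^\theta $-homogeneous components are subquotients of tensor powers of $V$ and therefore semisimple; thus the hypotheses of Lemma~\ref{le:BVdec} persist and one can apply the lemma again. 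Iterating this procedure, at each stage I extract the irreducible summands sitting at the lowest $|\cdot |_\cA $-degree of each non-irreducible piece, which strictly increases the minimum $|\cdot |_\cA $-degree of every residual non-irreducible piece. Since $\NA (V)_\gamma $ is finite-dimensional for every $\gamma \in \ndN _0\cA $, for each fixed $\gamma $ the iteration stabilizes after at most $|\gamma |_\cA -1$ steps, and taking the direct limit of these refinements yields $\NA (V)\simeq \ot _{l\in L}\NA (W_l)$ with all $W_l$ irreducible in $\ydHtf $. An irreducible object of $\ydHtf $ is necessarily concentrated in a single $\ndZ ^\theta $-degree $\beta _l\in \ndN _0\cA $: via Rem.~\ref{re:Heq}, each $\ndZ ^\theta $-homogeneous piece is itself a sub-Yetter--Drinfeld module.

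For (2), I show by induction on $n=|\gamma |_\cA $ that the multiset $\{(W_l,\beta _l)\mid |\beta _l|_\cA \le n\}$ is determined by $\NA (V)$. Given any decomposition $\NA (V)\simeq \ot _{l\in L}\NA (W_l)$ as in (1), the component $\NA (V)_\gamma $ is isomorphic in $\ydH $ to
\begin{align*}
\bigoplus _{(n_l)_{l\in L},\;\sum _l n_l\beta _l=\gamma } \bigotimes _{l\in L}\NA ^{n_l}(W_l).
\end{align*}
Tuples $(n_l)$ with $n_{l_0}\ge 1$ for some $l_0$ satisfying $\beta _{l_0}=\gamma $ force $n_l=0$ for all other $l$ and together contribute exactly $\oplus _{l\,:\,\beta _l=\gamma }W_l$; every other tuple uses only factors $\NA (W_l)$ with $|\beta _l|_\cA <n$, which by the induction hypothesis give identical contributions for the two decompositions. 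Since $\NA (V)_\gamma $ is finite-dimensional and semisimple in $\ydH $, Krull--Schmidt forces
\begin{align*}
\bigoplus _{l\,:\,\beta _l=\gamma }W_l\simeq \bigoplus _{l'\,:\,\beta '_{l'}=\gamma }W'_{l'}
\end{align*}
in $\ydH $, and comparing irreducible summands produces a bijection between $L_\gamma :=\{l\in L\mid \beta _l=\gamma \}$ and $L'_\gamma $ identifying the respective $W_l$. Assembling these bijections over all $\gamma \in \ndN _0\cA $ gives the required global bijection $\varphi :L\to L'$.

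The main technical obstacle I anticipate is the iteration step in part (1): I must verify that each newly produced non-irreducible piece still satisfies the hypotheses of Lemma~\ref{le:BVdec}, and give a precise meaning to ``direct limit of decompositions'' that guarantees a single decomposition in $\ydHtf $. Both points rest on the finite-dimensionality of every $\ndZ ^\theta $-homogeneous component of $\NA (V)$, which also underlies the Krull--Schmidt step used in part (2).
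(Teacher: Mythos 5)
Your proposal is correct and follows essentially the same route as the paper. For (1), the paper likewise iterates Lemma~\ref{le:BVdec}: it builds recursively index sets $L^k$ and families $(W^k_l)$ with the invariant that each $W^k_l$ is either irreducible or supported in $|\cdot |_\cA $-degrees $\ge k+1$, records the already-irreducible indices in ${L'}^k$, and sets $L=\bigcup _k{L'}^k$, verifying the isomorphism $\NA (V)\simeq \ot _{l\in L}\NA (W_l)$ degree by degree using the stabilization you also observe. Your justification that the hypotheses of Lemma~\ref{le:BVdec} persist (homogeneous components of the intermediate pieces are direct summands of $V^{\ot n}$, hence semisimple with semisimple tensor powers) is exactly the remark the paper makes inside its proof. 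For (2), your induction on $|\gamma |_\cA $ with Krull--Schmidt in each finite-dimensional homogeneous component is precisely the content of the paper's Lemma~\ref{le:undec}, which the theorem's proof simply cites. The one point worth pinning down, which you flag but do not fully discharge, is the compatibility of the tensor factors under the uniqueness induction: the bijection from the inductive hypothesis need not be order-preserving, so one must invoke the braiding to reorder the factors in the direct limit before applying Krull--Schmidt; this is implicit in the paper as well and is unproblematic, but a complete write-up should say it.
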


\begin{proof}
  The uniqueness follows from Lemma~\ref{le:undec} below.

  For the proof of the existence of the family $(W_l)_{l\in L}$ we first
  construct an inverse system of totally ordered sets and corresponding
  families of objects in $\ydHtf $. This inverse system gives rise to a direct
  system in a natural way, and the limit of this direct system will be the
  family $(W_l)_{l\in L}$ we are looking for.

  First we define recursively for all $k\in \ndN _0$
  a totally ordered index set $(L^k,\le )$ and a family
  $(W^k_l)_{l\in L^k}$, such that
  \begin{itemize}
    \item[($*$)] $0\not=W^k_l\in \ydHtf $,
      $W^k_l$ is irreducible or $W^k_l=\oplus _{n=k+1}^\infty W^k_l(n)$ for
      all $l\in L^k$, and
      $\NA (V)\simeq \ot _{l\in L^k} \NA (W^k_l)$ in $\ydHtf $.
  \end{itemize}
  Note that then for all $k,l,n$, $W^k_l(n)$ is isomorphic to a direct summand
  of $V^{\ot n}$, and hence all finite tensor powers of $W^k_l$ are semisimple
  in $\ydHtf $.

  For $k=0$ let $L^0=\{1\}$ and $W^0_1=V$.
  If $k\ge 0$, $l\in L^k$, and $W^k_l$ is not irreducible, then
  we choose a totally ordered index set $(L_{kl},\le )$ and a family
  $(W_{klm})_{m\in L_{kl}}$ of objects in $\ydHtf $
  such that
  \[\NA (W^k_l)\simeq \ot _{m\in L_{kl}} \NA (W_{klm}) \]
  and $0\not=W_{klm}\in \ydHtf $ is irreducible or
  $W_{klm}=\oplus _{n=k+2}^\infty W_{klm}(n)$. This is possible by
  Lemma~\ref{le:BVdec}.
  Now we define
  \begin{align}
    {L'}^k=&\{l\in L^k\,|\,W^k_l \text{ is irreducible}\},
    \label{eq:L'k}\\
    L^{k+1}=&{L'}^k\sqcup \{(l,m)\,|\,l\in L^k\setminus {L'}^k,m\in L_{kl}\},
    \quad \text{(disjoint union)}\notag \\
    W^{k+1}_j=&
    \begin{cases}
      W^k_j & \text{if $j\in {L'}^k$,}\\
      W_{klm} & \text{if $j=(l,m)$, $l\in L^k\setminus {L'}^k$, $m\in L_{kl}$.}
    \end{cases}\notag
  \end{align}
  Let $\le $ be the natural order on $L^{k+1}$. That is, if $l_1,l_2\in
  {L'}^k$, then $l_1\le l_2$ if and only if this relation holds in $L^k$.
  If $l_1\in {L'}^k$, $l_2=(\tilde{l},m)\in L^{k+1}\setminus {L'}^k$, then
  $l_1\le l_2$ if and only if $l_1\le \tilde{l}$ in $L^k$,
  and $l_2\le l_1$ if and only if $\tilde{l}\le l_1$ in $L^k$.
  Finally, if $l_1=(\tilde{l}_1,m_1)$,
  $l_2=(\tilde{l}_2,m_2)\in L^{k+1}\setminus {L'}^k$, then
  $l_1\le l_2$ if and only if $\tilde{l}_1<\tilde{l}_2$ in $L^k$
  or $\tilde{l}_1=\tilde{l}_2$ and $m_1\le m_2$ in $L_{k\tilde{l}_1}$.
  Clearly, $(L^{k+1},\le )$ is totally ordered, and the family
  $(W^{k+1}_l)_{l\in L^{k+1}}$ satisfies the properties in ($*$).

  For all $k\in \ndN _0$ define ${L'}^k$ as in Eq.~\eqref{eq:L'k}, and let
  $L=\cup _{k\in \ndN }{L'}^k$ with the total order $\le $ induced by the
  orders defined on the sets ${L'}^k$. Note that ${L'}^k\subset {L'}^{k+1}$
  and $W^{k+1}_l=W^k_l$ for all $k\in \ndN _0$ and $l\in {L'}^k$. Define
  $W_l=W^k_l$ for all $l\in {L'}^k\subset L$. Then $0\not= W_l\in \ydHtf $ is
  irreducible and has positive degree (with respect to the basis $\{\al
  _i\,|\,1\le i\le \theta \}$) for all $l\in L$.
  We prove that $\ot _{l\in L}\NA (W_l)\simeq \NA (V)$ in $\ydHtf $.
  Since ${L'}^k\subset L^k$ for all $k\in \ndN _0$, $\ot _{l\in {L'}^k}\NA
  (W_l)$
  is isomorphic to a subobject of $\NA (V)$ for all $k\in \ndN _0$ by ($*$).
  Moreover, ($*$) also implies that
  \begin{align}
    \NA ^n(V)\simeq 
    \big(\ot _{l\in {L'}^k}\NA (W_l) \big)(n)
    \label{eq:BVniso}
  \end{align}
  for all $k,n\in \ndN _0$ with $n\le k$. The construction of $L$ and
  $(W_l)_{l\in L}$ shows that
  \begin{align*}
    \big(\ot _{l\in {L'}^k}\NA (W_l) \big)(n)
    =\big(\ot _{l\in {L'}^{k+1}}\NA (W_l) \big)(n)
    =\big(\ot _{l\in L}\NA (W_l) \big)(n)
  \end{align*}
  for all $k,n\in \ndN _0$ and $n\le k$. Hence
  \begin{align*}
    \NA (V)=& \oplus _{n=0}^\infty \NA ^n(V)
    \simeq \oplus _{k=0}^\infty  
    \big(\ot _{l\in {L'}^k}\NA (W_l) \big)(k)\\
    =& \oplus _{k=0}^\infty 
    \big(\ot _{l\in L}\NA (W_l) \big)(k)
    =  \ot _{l\in L}\NA (W_l).
  \end{align*}
  This finishes the proof of the theorem.
\end{proof}

\begin{remar}\label{re:Khar}
  In the setting of Thm.~\ref{th:BVirred},
  assume that $H$ is the group algebra of an abelian group, and $V_i$ is
  one-dimensional for all $i\in \{1,2,\dots ,\theta \}$. In \cite{a-Khar99},
  Kharchenko gave a construction of a PBW-basis for a class of Hopf algebras,
  which can be applied to $\NA (V)$. Thm.~\ref{th:BVirred} can be viewed
  as a generalization of Kharchenko's basis
  to Nichols algebras over arbitrary Hopf algebras in the sense that the PBW
  generators of Kharchenko are replaced by the Yetter-Drinfeld modules $W_l$.
\end{remar}

\begin{lemma}
  Let $\theta \in \ndN $, $(L,\le )$, $(L',\le )$ totally ordered index sets,
  and
  $(W_l)_{l\in L}$, $(W'_{l'})_{l'\in L'}$ families of irreducible
  objects in $\ydHtf $.  Assume that
  $\deg W_l\in \sum _{i=1}^\theta \ndN _0\al _i \setminus \{0\}$ for all $l\in
  L$. If $\ot _{l\in L}\NA (W_l)\simeq \ot _{l'\in L'}\NA (W'_{l'})$,
  and all $\ndZ ^\theta $-homogeneous components of
  $\ot _{l\in L}\NA (W_l)$ are \fd{},
  then there exists a bijection $\varphi :L\to L'$ such that
  $W_l\simeq W'_{\varphi (l)}$ in $\ydHtf $
  for all $l\in L$.
  \label{le:undec}
\end{lemma}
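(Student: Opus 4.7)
The plan is to match the $W_l$ with the $W'_{l'}$ degree by degree, by induction on the total degree $d=|\gamma|_\cA$, using Krull--Schmidt for finite-dimensional Yetter-Drinfeld modules.

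\emph{Setup.} I would first check that every irreducible $W \in \ydHtf$ is concentrated in a single $\ndZ^\theta$-degree. Via Rem.~\ref{re:Heq}, a $\ndZ^\theta$-graded object corresponds to one in $\ydHp$ on which $\fie\ndZ^\theta$ acts trivially and whose $\ndZ^\theta$-grading is recorded by the $\fie\ndZ^\theta$-part of the coaction; both the $H$-action and the $H$-coaction preserve this grading, so every homogeneous component is a YD subobject. Irreducibility then forces each $W_l$ to live in a single degree $\beta_l:=\deg W_l \in \ndN_0\cA\setminus\{0\}$, and likewise $\beta_{l'}:=\deg W'_{l'}$. The hypothesis that all $\ndZ^\theta$-homogeneous components of $\ot_{l\in L}\NA(W_l)$ are finite-dimensional implies, for each $\gamma$, that $\{l\in L:\beta_l=\gamma\}$ is finite (and similarly for $L'$).

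\emph{Graded decomposition.} For every $\gamma\in\ndN_0\cA$ there is an isomorphism
\begin{align*}
\bigl(\ot_{l\in L}\NA(W_l)\bigr)_\gamma \;\simeq\; \bigoplus_{(n_l)\in\mathcal{N}_\gamma}\,\ot_{l\in L}\NA^{n_l}(W_l)
\end{align*}
in $\ydHf$, where $\mathcal{N}_\gamma$ denotes the set of tuples $(n_l)_{l\in L}\in\ndN_0^L$ with $\sum_l n_l\beta_l=\gamma$ and finitely many nonzero entries. Each summand is a finite-dimensional YD module determined by the $W_l$'s, since the braiding defining $\NA(W_l)$ is part of the YD structure of $W_l$.

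\emph{Induction on $d=|\gamma|_\cA$.} For $d=1$, the only nontrivial tuple in $\mathcal{N}_\gamma$ has a single entry $n_l=1$ at some $l$ with $\beta_l=\gamma$, so $(\ot_l\NA(W_l))_\gamma \simeq \oplus_{l:\beta_l=\gamma}W_l$, and the analogous formula holds on the $L'$-side. As each $W_l$ is irreducible, Krull--Schmidt in $\ydHf$ delivers a bijection between $\{l\in L:\beta_l=\gamma\}$ and $\{l'\in L':\beta_{l'}=\gamma\}$ that matches isomorphism classes. For $d\ge 2$ and $\gamma$ with $|\gamma|_\cA=d$, every nontrivial tuple in $\mathcal{N}_\gamma$ either has exactly one $n_l=1$ at an $l$ with $\beta_l=\gamma$ (and zeros elsewhere), or is supported on indices $l$ with $|\beta_l|_\cA<d$. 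Consequently
\begin{align*}
\bigl(\ot_{l\in L}\NA(W_l)\bigr)_\gamma \;\simeq\; \bigl(\oplus_{l:\beta_l=\gamma}W_l\bigr)\,\oplus\,C_\gamma,
\end{align*}
where $C_\gamma$ is built only from $W_l$ with $|\beta_l|_\cA<d$. The inductive hypothesis gives a bijection between such $W_l$ and the corresponding $W'_{l'}$ matching isomorphism types, so $C_\gamma\simeq C'_\gamma$ (the analogous construction on the $L'$-side). Cancelling $C_\gamma$ via Krull--Schmidt yields $\oplus_{l:\beta_l=\gamma}W_l \simeq \oplus_{l':\beta_{l'}=\gamma}W'_{l'}$, and a second application of Krull--Schmidt produces the required bijection at degree $\gamma$. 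Assembling these bijections over all $\gamma$ gives $\varphi:L\to L'$ with $W_l\simeq W'_{\varphi(l)}$.

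The main obstacle is the cancellation step, which relies on Krull--Schmidt in $\ydHf$: endomorphism rings there are finite-dimensional algebras, hence semiperfect, and indecomposables have local endomorphism rings, so direct-sum decompositions into indecomposables are unique up to permutation and isomorphism. The only auxiliary point is that $C_\gamma$ is finite-dimensional, which is immediate from the graded decomposition and the standing hypothesis that the $\ndZ^\theta$-homogeneous components of $\ot_{l\in L}\NA(W_l)$ are finite-dimensional.
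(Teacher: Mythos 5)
Your proof is correct and follows essentially the same strategy as the paper's: induct on the degree, split the degree-$\gamma$ component into $\oplus_{l:\deg W_l=\gamma}W_l$ plus a complement built only from $W_l$ of strictly smaller degree, identify that complement on both sides by the inductive hypothesis, and conclude by Krull--Remak--Schmidt cancellation. The paper phrases the induction in terms of the total degree $k=|\gamma|_\cA$ while you refine to individual $\ndZ^\theta$-degrees $\gamma$, but the two formulations are equivalent since an isomorphism in $\ydHtf$ is automatically degree-preserving.
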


\begin{proof}
  Let $l'\in L'$.
  Since $W'_{l'}$ is a direct summand of $\ot _{l''\in L'}\NA (W'_{l''})
  \simeq
  \ot _{l\in L}\NA (W_l)$ and the degree $0$ component of
  $\ot _{l\in L}\NA (W_l)$ is $\fie $, we obtain that
  $\deg W'_{l'}\in \sum _{i=1}^\theta \ndN _0\al _i \setminus \{0\}$.

  For $k\in \ndN _0$ let $L_k=\{l\in L\,\big|\,|\deg W_l|=k\}$.
  Then $L_0=\emptyset $, $L=\cup _{k\in \ndN }L_k$, and
  $L_k$ is a finite set for all $k\in \ndN $, since the $\ndZ
  ^\theta$-homogeneous components of
  $\ot _{l\in L}\NA (W_l)$ are \fd{}.
  Similarly $L'=\cup _{k\in \ndN }L'_k$. It suffices to show that
\begin{itemize}
  \item[($*$)] 
  for all $k\in \ndN _0$ there is a bijection $\varphi _k:L_k\to L'_k$ such
  that $W_l\simeq W'_{\varphi _k(l)}$ in
  $\ydHtf $ for all $k\in \ndN _0$, $l\in L_k$.
\end{itemize}
  This is clear for $k=0$, since
  $L_0=L'_0=\emptyset $.

  %Let $\gamma \in \ndZ ^\theta $ with $|\gamma |=k$. First,
  Let $k\in \ndN $. Then
  \begin{align*}
  %\NA ^0(V)=&\fie
  %=\big(\ot _{l\in L}\NA (W_l)\big)(0)
  %=\big(\ot _{l'\in L'}\NA (W'_{l'})\big)(0),\\
  \big(\ot _{l\in L}\NA (W_l)\big)(k)\simeq &
  \oplus _{l\in L_k}W_l\oplus \big(\otimes _{l\in L_1\cup L_2\cup \cdots \cup
  L_{k-1}}\NA (W_l)\big)(k),\\
  \big(\ot _{l'\in L'}\NA (W'_{l'})\big)(k)\simeq &
  \oplus _{l'\in L'_k}W'_{l'}\oplus
  \big(\otimes _{l'\in L'_1\cup L'_2\cup \cdots \cup
  L'_{k-1}}\NA (W'_{l'})\big)(k)
  \end{align*}
  in $\ydHtf $.
  Hence ($*$) follows by induction on $k$ and by Krull-Remak-Schmidt.
%  follow
%  This implies the 
%  and the last line of the above equation implies that
%  $\oplus _{l\in L_1}W_l=\oplus _{l'\in L'_1}W'_{l'}=V$.
%  Since $W_l$ and $W'_{l'}$ are irreducible
%  for all $l\in L$, $l'\in L'$, for each $i\in \{1,2,\ldots ,\theta \}$ there
%  exist unique $l(i)\in L$, $l'(i)\in L'$, such that $W_{l(i)}\simeq V_i$,
%  $W'_{l'(i)}\simeq V_i$. Further, if $l\in L$, $l\not=l(i)$ for all $i\in
%  \{1,2,\ldots ,\theta \}$,
%  then $\deg W_l\ge 2$, and if $l'\in L'$, $l'\not=l'(i)$ for all $i$,
%  then $\deg W'_{l'}\ge 2$.
%  By induction on $k\in \ndN $ one concludes from the equations
%  \begin{align*}
%    \big(\ot _{l\in L}\NA (W_l)\big)(k)
%    =\big(\ot _{l'\in L'}\NA (W'_{l'})\big)(k),
%  \end{align*}
%  that for all $k\in \ndN $ there exists a (not necessarily unique) bijection
%  $\varphi _k:L_k\to L'_k$ such that
%  $W_l\simeq W'_{\varphi _k(l)}$ for all $l\in L_k$. Moreover, if $l\notin
%  \cup _{n=1}^k L_n$,
%  then $\deg W_l>k$, and if $l'\notin \cup _{n=1}^k L'_n$, then $\deg
%  W'_{l'}>k$.
%  Since all objects $W_l$, $W'_{l'}$ are homogeneous of finite degree, we
%  conclude the uniqueness of the family $(W_l)_{l\in L}$ up to the ordering
%  of $L$.
\end{proof}

The following similar results will be used later on.

\begin{lemma}
  Let $K,K'$ and $B$ be $\ndZ ^\theta $-graded objects in $\ydH $ such that
  all homogeneous components of $K$ and $B$ are finite-dimensional,
  $B_0$ is isomorphic to the trivial object $\fie \in \ydH $,
  and $K_\gamma =B_\gamma =0$ for all $\gamma \in \ndZ
  ^\theta \setminus \sum _{i=1}^\theta \ndN _0\al _i$.
  If $K\ot B\simeq K'\ot B$ in $\ydHtf $,
  then $K\simeq K'$ in $\ydHtf $.
  \label{le:KBiso}
\end{lemma}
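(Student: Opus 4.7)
The plan is to reduce the lemma to a graded cancellation argument and prove, by induction on $n=|\gamma|_\cA$, that $K_\gamma \simeq K'_\gamma $ in $\ydH $ for every $\gamma \in \sum _{i=1}^\theta \ndN _0\al _i$; assembling these isomorphisms gives $K\simeq K'$ in $\ydHtf $. Note first that since $(K'\ot B)_\gamma $ contains $K'_\gamma \ot B_0\simeq K'_\gamma $ as a subobject and is \fd{} (because $K\ot B\simeq K'\ot B$ lies in $\ydHtf $), all homogeneous components of $K'$ are \fd{} as well. In particular every homogeneous piece we will manipulate is a finite-dimensional object of $\ydH $.

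For the base case $n=0$, the hypotheses $K_\gamma =B_\gamma =0$ for $\gamma \notin \sum \ndN _0\al _i$ and $B_0\simeq \fie $ give
\begin{align*}
K_0 \simeq K_0\ot B_0 = (K\ot B)_0 \simeq (K'\ot B)_0 = K'_0\ot B_0 \simeq K'_0.
\end{align*}
For the inductive step fix $\gamma $ with $|\gamma |_\cA =n\ge 1$ and assume the claim for all $\delta $ with $|\delta |_\cA <n$. Decomposing the homogeneous components of the tensor products and isolating the $\beta =0$ summand gives
\begin{align*}
K_\gamma \oplus \bigoplus _{0<\beta \le \gamma } K_{\gamma -\beta }\ot B_\beta
\simeq (K\ot B)_\gamma \simeq (K'\ot B)_\gamma
\simeq K'_\gamma \oplus \bigoplus _{0<\beta \le \gamma } K'_{\gamma -\beta }\ot B_\beta .
\end{align*}
By the inductive hypothesis $K_{\gamma -\beta }\simeq K'_{\gamma -\beta }$ in $\ydH $ for every $\beta $ with $0<\beta \le \gamma $, so the two direct sums on the right-hand sides of each expression are isomorphic in $\ydH $. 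Since both sides are \fd{} objects of $\ydH $, and the category of \fd{} Yetter-Drinfeld modules satisfies the Krull--Remak--Schmidt theorem (endomorphism rings of \fd{} objects are \fd{} $\fie $-algebras, hence semiperfect), we may cancel the common summand to conclude $K_\gamma \simeq K'_\gamma $ in $\ydH $.

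Taking the direct sum of the isomorphisms $K_\gamma \simeq K'_\gamma $ over all $\gamma \in \sum \ndN _0\al _i$ produces the desired isomorphism $K\simeq K'$ in $\ydHtf $. The main delicate point is the invocation of Krull--Remak--Schmidt in $\ydHf $; this is what forces us to work one \fd{} degree at a time rather than attempting to cancel $B$ globally, and it is the sole nontrivial ingredient in the argument.
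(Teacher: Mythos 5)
Your proof follows essentially the same route as the paper's: decompose the homogeneous pieces of $K\ot B$ and $K'\ot B$, and cancel the common summands by induction and the Krull--Remak--Schmidt theorem in $\ydHf$. (The paper organizes the induction by the height $k=|\gamma |_\cA$ using the aggregated components $X(k)=\oplus _{|\gamma |_\cA =k}X_\gamma $, whereas you work one $\gamma $ at a time, but this is cosmetic since any $\ndZ ^\theta $-graded isomorphism between the height-$k$ pieces splits into the individual $\gamma $-components anyway.)

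There is one small point you should make explicit. From the same observation you use to conclude that $K'_\gamma $ is finite-dimensional (namely that $K'_\gamma \simeq K'_\gamma \ot B_0$ is a direct summand of $(K'\ot B)_\gamma \simeq (K\ot B)_\gamma $) you should also record that $K'_\gamma =0$ for all $\gamma \in \ndZ ^\theta \setminus \sum _{i=1}^\theta \ndN _0\al _i$, since $(K\ot B)_\gamma =0$ there. The paper states this explicitly, and it is actually needed in your inductive step: without it, the identity
\begin{align*}
(K'\ot B)_\gamma =\bigoplus _{\beta \in \ndN _0\cA }K'_{\gamma -\beta }\ot B_\beta
\end{align*}
would a priori include terms with $\beta \not\le \gamma $, and your formula $(K'\ot B)_\gamma \simeq K'_\gamma \oplus \bigoplus _{0<\beta \le \gamma }K'_{\gamma -\beta }\ot B_\beta $ would be incomplete. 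Once this vanishing is noted, your argument is correct and matches the paper's.
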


\begin{proof}
  Since $B_0\simeq \fie $, $K'_\gamma $ is (isomorphic to) a direct summand of
  $(K'\ot B)_\gamma $ for all $\gamma \in \ndZ ^\theta $. Hence the
  isomorphism $K\ot B\simeq K'\ot B$ implies that $K'_\gamma =0$ for all
  $\gamma \in \ndZ ^\theta \setminus \sum _{i=1}^\theta \ndN _0\al _i$.

  Similarly to the proof of the previous lemma,
  \begin{align*}
    (K\ot B)(k)\simeq & K(k)\oplus \oplus _{1\le k'\le k}
    K(k-k')\ot B(k'),\\
    (K'\ot B)(k)\simeq & K'(k)\oplus \oplus _{1\le k'\le k}
    K'(k-k')\ot B(k')
  \end{align*}
  in $\ydHtf $ for all $k\in \ndN _0$.
  Since the $\ndN _0$-homogeneous components of $K\ot B$ are \fd{},
  the claim follows by induction on $k$ and by Krull-Remak-Schmidt.
\end{proof}

\begin{lemma}
  Let $\theta \in \ndN $ and let $V_1,\ldots ,V_\theta $
  be \fd{} irreducible objects in $\ydH $. Let $V=\oplus _{i=1}^\theta V_i$,
  and define a $\ndZ ^\theta $-grading of $\NA (V)$
  such that $\deg V_i=\al _i$ for all $i$.
  Assume that
  there exists a totally ordered index set $(L,\le )$ and a family
  $(W_l)_{l\in L}$ of irreducible objects
  $W_l\in \ydHtf $ such that $\deg W_l\in \ndN _0\cA $ for
  all $l$, and
  \begin{align*}
    \NA (V)\simeq \ot _{l\in L}\NA (W_l)
  \end{align*}
  in $\ydHtf $.
  Then for each $i\in \{1,\ldots ,\theta \}$ there exists a unique $l(i)\in L$
  such that $\deg W_{l(i)}\in \ndN \al _i$. Moreover,
  $\deg W_{l(i)}=\al _i$ and $W_{l(i)}\simeq V_i$.
  \label{le:BVdec1}
\end{lemma}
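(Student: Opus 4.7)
The plan is to restrict the given isomorphism to its $\ndN _0\al _i$-graded part and then to apply the uniqueness statement of Lemma~\ref{le:undec}. The first step is to identify the $\ndN _0\al _i$-component of $\NA (V)$ with $\NA (V_i)$. Since $\NA (V)$ is $\ndN _0^\theta $-graded with the $V_j$-generators of degree $\al _j$, any monomial of degree $n\al _i$ involves only factors from $V_i$, so the canonical embedding $\NA (V_i)\hookrightarrow \NA (V)$ has image $\oplus _{n\in \ndN _0}\NA (V)_{n\al _i}$ as a $\ndZ ^\theta $-graded subobject in $\ydH $. On the right-hand side, each $\NA (W_l)$ is supported in $\ndN _0\deg W_l$, and hence the $\ndN _0\al _i$-graded part of $\ot _{l\in L}\NA (W_l)$ is precisely $\ot _{l\in L_i}\NA (W_l)$, where $L_i=\{l\in L\mid \deg W_l\in \ndN \al _i\}$.

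Restricting the hypothesized isomorphism to $\ndN _0\al _i$-graded components therefore yields
\begin{align*}
  \NA (V_i)\simeq \ot _{l\in L_i}\NA (W_l)\quad \text{in $\ydHtf $.}
\end{align*}
The final step is to present the left-hand side trivially as a tensor product over a singleton index set with unique factor $V_i$, and invoke Lemma~\ref{le:undec} to compare. Both sides satisfy the hypotheses of that lemma: $V_i$ and each $W_l$ with $l\in L_i$ are irreducible in $\ydHtf $, their degrees all lie in $\ndN \al _i\subset \sum _{j=1}^\theta \ndN _0\al _j\setminus \{0\}$, and $\NA (V_i)$ has \fd{} $\ndZ ^\theta $-homogeneous components. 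The resulting bijection of index sets forces $|L_i|=1$, and the unique $l(i)\in L_i$ automatically satisfies $W_{l(i)}\simeq V_i$, hence also $\deg W_{l(i)}=\al _i$.

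No step looks like a serious obstacle; the only mildly delicate point is the initial identification of the $\ndN _0\al _i$-graded part of $\NA (V)$ with $\NA (V_i)$, which however is immediate from the $\ndZ ^\theta $-graded character of the canonical inclusion $\NA (V_i)\hookrightarrow \NA (V)$ induced by $V_i\hookrightarrow V$.
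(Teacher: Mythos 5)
Your proof is correct and follows essentially the same route as the paper's: both identify $\bigoplus_{n}\NA(V)_{n\al_i}$ with $\NA(V_i)$ and restrict the given isomorphism to obtain $\NA(V_i)\simeq\bigotimes_{l\in L_i}\NA(W_l)$. The only difference is in the finishing move: the paper first reads off the degree-$\al_i$ component $V_i\simeq\bigoplus_{l\in L_i,\,\deg W_l=\al_i}W_l$ to extract $l(i)$ from irreducibility of $V_i$ and then concludes $L_i=\{l(i)\}$, whereas you invoke Lemma~\ref{le:undec} applied to the one-factor decomposition of $\NA(V_i)$ — a clean packaging of the same Krull--Remak--Schmidt argument.
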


\begin{proof}
  Let $i\in \{1,\ldots ,\theta \}$.
  The subspace $\oplus _{n\in \ndZ }\NA (V)_{n\al _i}\subset \NA (V)$
  is the subalgebra $\NA (V_i)$, and $\NA (V)_{\al _i}=V_i$.
  Since $\deg W_l\in \sum _{j=1}^\theta \ndN _0\al _j\setminus 0$, we obtain
  that
  \begin{align}
    \NA (V_i)\simeq
    \oplus _{n\in \ndZ }\big(\ot _{l\in L}\NA (W_l)\big )_{n\al _i}=
    \ot _{l\in L_i}\NA (W_l),
    \label{eq:BViso}
  \end{align}
  where $L_i=\{l\in L\,|\,\deg W_l\in \ndN \al _i\}$. Hence
  \[V_i=\NA (V_i)_{\al _i}\simeq 
  \big(\ot _{l\in L}\NA (W_l)\big)_{\al _i}=\oplus _{l\in L_i,\deg W_l
  =\al _i}W_l,\]
  and since $V_i$ is irreducible, it follows that there is a unique $l=l(i)\in
  L$ such that $V_i\simeq W_l$ and $\deg W_l=\al _i$. Then
  Eq.~\eqref{eq:BViso} implies that $L_i=\{l(i)\}$ which proves the lemma.
\end{proof}

\section{Weyl groupoids and root systems}
\label{sec:Wgrs}

In this section we give the definition of generalized root systems in
\cite{a-HeckYam08} in terms of category theory, see \cite{p-CH08},
and recall some results which
will be needed in the following sections.

Let $I$ be a non-empty finite set and $(\al _i)_{i\in I}$
the standard basis of $\ndZ ^I$.

Recall from \cite[\S 1.1]{b-Kac90}
that a generalized Cartan matrix $\Cm =(\cm _{ij})_{i,j\in I}$
is a matrix in $\ndZ ^{I\times I}$ such that
  \begin{enumerate}
    \item[(M1)] $\cm _{ii}=2$ and $\cm _{jk}\le 0$ for all $i,j,k\in I$ with
      $j\not=k$,
    \item[(M2)] if $i,j\in I$ and $\cm _{ij}=0$, then $\cm _{ji}=0$.
  \end{enumerate}

Let $\cM $ be a non-empty set, and for all $i\in I$ and $N\in \cM $ let
$\rfl _i : \cM \to \cM $ be a map and $\Cm ^N=(\cm ^N_{jk})_{j,k \in I}$
a generalized Cartan matrix.
The quadruple
\[\cC = \cC (I,\cM ,(\rfl _i)_{i \in I},
(\Cm ^N)_{N \in \cM }),\]
is called a \textit{Cartan scheme} if
\begin{enumerate}
\item[(C1)] $\rfl _i^2 = \id$ for all $i \in I$,
\item[(C2)] $\cm ^N_{ij} = \cm ^{\rfl _i(N)}_{ij}$ for all $N \in \cM $
  and $i,j\in I$.
\end{enumerate}

Let $\cC = \cC (I,\cM ,(\rfl _i)_{i \in I}, (\Cm ^N)_{N \in \cM })$ be a
Cartan scheme. For all $i \in I$, $N\in \cM $ define $s_i^N \in
\Aut(\ndZ ^I)$ by
\[s_i^N(\al _j) = \al _j - \cm _{ij}^N \al _i \qquad \text{ for all } j \in
I.\]

Recall that a groupoid is a category where all morphisms are isomorphisms.
The {\em Weyl groupoid of $\cC $}  is the groupoid
$\Wg (\cC )$ with $\Ob (\Wg (\cC ))=\cM $, where the morphisms are 
generated by all $s_i^N : N \to \rfl _i(N)$ with $i \in I$, $N \in \cM $.
Formally, for $N,P\in \cM $ the set $\Hom (N,P)$ consists of the triples
$(P,s,N)$, such that
\[
s=s_{i_n}^{\rfl _{i_{n-1}}\cdots \rfl _{i_1}(N)}\cdots
s_{i_2}^{\rfl _{i_1}(N)}s_{i_1}^N
\]
and $\rfl _{i_n}\cdots \rfl _{i_2}\rfl _{i_1}(N)=P$
for some $n\in \ndN _0$ and $i_1,\ldots ,i_n\in I$.
The composition of morphisms is induced by the group structure of
$\Aut (\ndZ ^I)$:
\[ (Q,g,P)\circ (P,f,N) = (Q,gf, N)\]
for all $(Q,g,P),(P,f,N)\in \Hom (\Wg (\cC ))$. If $w=(P,f,N)\in \Hom (\Wg
(\cC ))$ and $\al \in \ndZ ^\theta $, then we define $w(\al )=f(\al )$.

Note that the inverse of the generating morphism
$(\rfl _i(N),s_i^N,N)$ %\in \Hom (N,\rfl _i(N))$
is $(N,s_i^{\rfl _i(N)},\rfl _i(N))$, since $s_i^N$ is a reflection, and
$s_i^N=s_i^{\rfl _i(N)}$ and $\rfl _i^2(N)=N$ by definition.

We say that
\[\Rwg = \Rwg (\cC , (\rsys (N))_{N \in \cM })\]
is a {\em root system of type} $\cC $
if $\cC = \cC (I,\cM ,(\rfl _i)_{i \in I}, (\Cm ^N)_{N \in \cM })$
is a Cartan scheme and $\rsys (N) \subset \ndZ ^I$, where $N \in \cM $,
are subsets such that
\begin{enumerate}
\item[(R1)] $\rsys (N)=(\rsys (N)\cap \ndN _0^I)\cup -(\rsys (N)\cap \ndN _0^I)$
  for all $N \in \cM $,
\item[(R2)] $\rsys (N)\cap \ndZ \al _i=\{\al _i,-\al _i\}$ for all $i\in I$,
  $N\in \cM $,
\item[(R3)] $s_i^N(\rsys (N))= \rsys (\rfl _i(N))$ for all $i \in I$, $N\in \cM $,
\item[(R4)] $(\rfl _i \rfl _j)^{m_{i,j;N}}(N) = N$ for all $i,j \in I$ and
  $N\in \cM $ such that $i\not=j$ and
  $m_{i,j;N}:= |\rsys (N)\cap (\ndN _0\al _i + \ndN _0 \al_j)|$ is finite.
\end{enumerate}
We note that Axiom~(M2) is redundant for root systems by
\cite[Lemma\,2.5]{p-CH08}.

If $\Rwg (\cC , (\rsys (N))_{N \in \cM })$ is a root system of type $\cC $,
then $\Wg (\Rwg ) := \Wg (\cC )$ is called the {\em
Weyl groupoid of} $\Rwg $.
The elements of $\rsys _+(N):=\rsys (N)\cap \ndN _0^I$ and
$\rsys _-(N):=-\rsys _+(N)$ are called \textit{positive} and
\textit{negative roots}, respectively. Following \cite[\S 5.1]{b-Kac90}
we say that
the roots $w(\al _i)\in \rsys (P)$, where $w\in \Hom (N,P)$, $N,P\in \cM $,
and $i\in I$, are \textit{real roots}.
The set of real roots and positive real roots is denoted by $\rsys \re (N)$
and $\rsys _+\re (N)$, respectively.
Note that (R3) implies that $w(\rsys (N))=\rsys (P)$
for all $N,P\in \Ob (\Wg (\Rwg ))$ and $w\in \Hom (N,P)$.

Recall that a groupoid $G$ is \textit{connected},
if for all $N,P\in \Ob (G)$
the set $\Hom (N,P)$ is non-empty. It is \textit{finite},
if $\Hom (G)$ is finite.

\begin{lemma}\cite[Lemma\,2.11]{p-CH08}\label{le:finrs}
  Let $\cC $ be a Cartan scheme and $\Rwg $ a root system of type $\cC $.
  Assume that $\Wg (\Rwg )$ is connected.
  Then the following are equivalent.
  \begin{enumerate}
    \item $\rsys (N)$ is finite for all
      $N\in \Ob (\Wg (\Rwg ))$.
    \item $\rsys (N)$ is finite for at least one
      $N\in \Ob (\Wg (\Rwg ))$.
    \item $\rsys \re (N)$ is finite for all
      $N\in \Ob (\Wg (\Rwg ))$.
    \item $\Wg (\Rwg )$ is finite.
  \end{enumerate}
  \label{le:finHom}
\end{lemma}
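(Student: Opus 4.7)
The plan is to close the cycle $(1)\Rightarrow(3)\Rightarrow(4)\Rightarrow(1)$, together with the easy equivalence $(1)\Leftrightarrow(2)$. The implications $(1)\Rightarrow(2)$, $(1)\Rightarrow(3)$, and $(4)\Rightarrow(3)$ are essentially tautological: a subset is finite whenever the ambient set is, and every real root is by definition the image $w(\al_i)$ of a simple root under some morphism in $\Wg(\Rwg)$, so $|\rsys\re(P)|\le|I|\cdot|\Hom(\Wg(\Rwg))|$. For $(2)\Rightarrow(1)$ connectedness is the point: if $\rsys(N_0)$ is finite and $P\in\cM$ is arbitrary, pick any $w\in\Hom(N_0,P)$; axiom (R3) makes $w$ restrict to a bijection $\rsys(N_0)\to\rsys(P)$, so $|\rsys(P)|=|\rsys(N_0)|<\infty$.

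The substantive implication is $(3)\Rightarrow(4)$. For a morphism $w:N\to P$ I introduce its inversion set
\[
\mathrm{Inv}(w):=\{\al\in\rsys_+(N):w(\al)\in\rsys_-(P)\}.
\]
Because (R3) sends real roots to real roots, $\mathrm{Inv}(w)\subseteq\rsys\re_+(N)$. An induction on the minimal length $\ell(w)$ of a factorization $w=s_{i_n}^{\rfl_{i_{n-1}}\cdots\rfl_{i_1}(N)}\cdots s_{i_1}^{N}$, using (C1) and the fact that each $s_i^P$ sends $\al_i\mapsto-\al_i$ and permutes $\rsys_+(P)\setminus\{\al_i\}$ (a direct consequence of (R1)--(R3)), yields the identity $|\mathrm{Inv}(w)|=\ell(w)$. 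Hence $\ell(w)\le|\rsys\re_+(N_0)|<\infty$ for every morphism $w$ with source $N_0$; since each reflection step offers at most $|I|$ choices, only finitely many such $w$ exist. Connectedness then makes every $P\in\cM$ the target of some such $w$, so $|\cM|$ is finite and $\Wg(\Rwg)$ is finite.

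For the closing arc $(4)\Rightarrow(1)$ the task is to rule out non-real positive roots. Fix $N_0$ and, among the now-finitely-many morphisms out of $N_0$, pick $w_0:N_0\to P_0$ of maximal length. Maximality forces $w_0(\rsys_+(N_0))\subseteq\rsys_-(P_0)$: otherwise some $\al_j$ would satisfy $w_0^{-1}(\al_j)\in\rsys_+(N_0)$, and by the length--inversion identity $s_j^{P_0}\circ w_0$ would have length $\ell(w_0)+1$. Reading a reduced factorization $w_0=s_{i_{\ell}}^{\cdot}\cdots s_{i_1}^{N_0}$ from the right and peeling off one reflection at a time, one shows inductively that the positive roots flipped at each step are $\al_{i_1}$, $s_{i_1}^{N_0}(\al_{i_2})$, $s_{i_1}^{N_0}s_{i_2}^{\rfl_{i_1}(N_0)}(\al_{i_3}),\dots$; these $\ell$ elements are manifestly real and, by the length--inversion identity applied to $w_0$, exhaust $\rsys_+(N_0)$. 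Hence $\rsys_+(N_0)=\rsys\re_+(N_0)$, which is finite by the already-established $(4)\Rightarrow(3)$.

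The main obstacle I expect is the identity $|\mathrm{Inv}(w)|=\ell(w)$: it is the groupoid analogue of the strong exchange condition of Coxeter theory, and one must verify that extending $w$ by one simple reflection changes $|\mathrm{Inv}(\cdot)|$ by exactly $\pm 1$. This hinges on the combinatorics of how each $s_i^P$ acts on $\rsys(P)$, which is controlled by (C1), (C2), (R2), and (R3), together with the connectedness assumption guaranteeing that reduced expressions exist between any two objects. Once this combinatorial core is in place, the remaining implications fall out as bookkeeping.
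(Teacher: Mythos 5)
The paper does not actually prove this lemma --- it cites \cite[Lemma~2.11]{p-CH08} --- so there is no in-paper proof to compare against. Your sketch follows the same general strategy as that reference and as \cite{a-HeckYam08}: introduce inversion sets, establish the length--inversion identity $\ell(w)=|\mathrm{Inv}(w)|$, and derive all four equivalences from it. The implications you dispatch quickly are indeed quick, $(2)\Rightarrow(1)$ via (R3) and connectedness is correct, and both $(3)\Rightarrow(4)$ and $(4)\Rightarrow(1)$ are structurally sound \emph{conditional} on the length--inversion identity. (One small point worth spelling out in $(3)\Rightarrow(4)$: boundedness of lengths gives finiteness of $\Hom(N_0,-)$, hence finiteness of $\cM$ by connectedness, hence finiteness of every $\Hom(N,P)$ by pre-/post-composition with fixed morphisms --- you gesture at this but it deserves a sentence.)

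The weak point is the length--inversion identity itself. The inequality $|\mathrm{Inv}(w)|\le\ell(w)$ does follow from the $\pm 1$ step lemma by a straightforward induction. But the reverse inequality $\ell(w)\le|\mathrm{Inv}(w)|$ --- equivalently, that $\ell(s_i^P w)=\ell(w)-1$ whenever $w^{-1}(\al _i)$ is a negative root --- is the groupoid analogue of the exchange/deletion condition, and it does \emph{not} fall out of a naive induction on $\ell(w)$: the inductive step leaves open the possibility that appending a reflection to a reduced word decreases $|\mathrm{Inv}|$ while the length still goes up, and ruling that out requires showing that the roots $\beta _1,\dots,\beta _n$ attached to a reduced word are pairwise distinct and that a coincidence $\beta _j=\beta _k$ forces a deletion. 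You flag this at the end as ``the main obstacle,'' which is the right instinct, but the proposal asserts the identity rather than proving it. Since both $(3)\Rightarrow(4)$ and $(4)\Rightarrow(1)$ lean on it, the argument as written has a genuine gap at its core; the remaining bookkeeping is fine.
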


\begin{propo} \cite[Prop.\,2.12]{p-CH08} \label{pr:allposroots}
  Let $\cC $ be a Cartan scheme and $\Rwg $ a root system of type $\cC $.
  If $\Wg (\Rwg )$ is
  %connected and 
  finite, then all roots are real.
\end{propo}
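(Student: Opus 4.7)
The plan is to establish a ``longest element'' statement for the finite Weyl groupoid and then apply a standard reflection-tracking argument. Real roots are stable under negation --- if $\beta =w(\al _i)$ for $w\in \Hom (P,N)$, then $-\al _i=s_i^P(\al _i)$ gives $-\beta =(w\circ s_i^{\rfl _i(P)})(\al _i)$ with $w\circ s_i^{\rfl _i(P)}\in \Hom (\rfl _i(P),N)$ --- so by (R1) it suffices to show every $\beta \in \rsys _+(N)$ is real. I would first record a preliminary fact: if $Q\in \Ob (\Wg (\Rwg ))$, $i\in I$, and $\gamma \in \rsys _+(Q)$ satisfies $s_i^Q(\gamma )\in \rsys _-(\rfl _i(Q))$, then $\gamma =\al _i$. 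Indeed, $s_i^Q$ alters only the $\al _i$-coefficient of $\gamma =\sum _j n_j\al _j$, so $s_i^Q(\gamma )$ having all coefficients $\le 0$ forces $n_j=0$ for $j\neq i$, and (R2) then yields $\gamma =\al _i$.

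The key ingredient is the existence, for each $N\in \Ob (\Wg (\Rwg ))$, of a morphism $w_0^N\in \Hom (N,N^*)$ with $w_0^N(\rsys _+(N))=\rsys _-(N^*)$. One produces $w_0^N$ by maximizing the inversion count $w\mapsto |\{\gamma \in \rsys _+(N)\,:\,w(\gamma )\in \rsys _-\}|$ over morphisms $w$ out of $N$: the maximum is attained because $\Hom (\Wg (\Rwg ))$ is finite, and one checks it equals $|\rsys _+(N)|$ (finite by Lemma~\ref{le:finHom}). Granted $w_0^N$, fix $\beta \in \rsys _+(N)$ and decompose $w_0^N=s_{i_\ell }^{N_{\ell -1}}\cdots s_{i_1}^{N_0}$ with $N_0=N$, $N_k=\rfl _{i_k}(N_{k-1})$. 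The vectors $\beta _k:=(s_{i_k}^{N_{k-1}}\cdots s_{i_1}^{N_0})(\beta )\in \rsys (N_k)$ are positive at $k=0$ and negative at $k=\ell $, so let $k$ be minimal with $\beta _k\in \rsys _-(N_k)$. Then $\beta _{k-1}\in \rsys _+(N_{k-1})$ while $s_{i_k}^{N_{k-1}}(\beta _{k-1})$ is negative, so the preliminary fact gives $\beta _{k-1}=\al _{i_k}$. Using $(s_{i_j}^{N_{j-1}})^{-1}=s_{i_j}^{N_j}$, inversion yields $\beta =(s_{i_1}^{N_1}\cdots s_{i_{k-1}}^{N_{k-1}})(\al _{i_k})$, a real root.

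The principal obstacle is the longest-element claim: showing that the inversion-counting maximum equals $|\rsys _+(N)|$ requires the length-function theory for finite Weyl groupoids, which is the genuinely technical content --- once it is in hand, the sign-tracking step above is bookkeeping. I would appeal to \cite{p-CH08} for the clean development of this theory.
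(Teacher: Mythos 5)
The paper does not prove this proposition; it simply cites \cite[Prop.\,2.12]{p-CH08}, so there is no proof in the paper to compare against. Your reconstruction is sound and is the standard argument in the theory of generalized root systems. The preliminary fact is correct: since $s_i^Q$ leaves the $\al_j$-coefficients with $j\neq i$ untouched, a positive root sent to a negative one must be a multiple of $\al_i$, and (R2) forces it to be $\al_i$. The sign-tracking step that converts the longest element into a reflection expression for an arbitrary positive root is also correct, including the index bookkeeping $\beta=s_{i_1}^{N_1}\cdots s_{i_{k-1}}^{N_{k-1}}(\al_{i_k})$.

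You have correctly identified the technical crux: the existence of $w_0^N\in\Hom(N,\,\cdot\,)$ with $w_0^N(\rsys_+(N))=\rsys_-(\,\cdot\,)$. Your inversion-maximization sketch does go through: pick $w\colon N\to P$ with $|\mathrm{Inv}(w)|$ maximal (the maximum exists since $\Hom(\Wg(\Rwg))$ and $\rsys_+(N)$ are finite, the latter by Lemma~\ref{le:finrs} applied to the connected component of $N$); if some simple root $\al_j\in\rsys_+(P)$ has $w^{-1}(\al_j)\in\rsys_+(N)$, then $\mathrm{Inv}(s_j^P\circ w)=\mathrm{Inv}(w)\sqcup\{w^{-1}(\al_j)\}$, contradicting maximality; and if no such $j$ exists, then $w^{-1}$ sends all of $\rsys_+(P)$ into $\rsys_-(N)$, whence $\mathrm{Inv}(w)=\rsys_+(N)$. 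Since the finiteness hypothesis is on all of $\Wg(\Rwg)$, this needs no connectedness assumption beyond restricting to the component of $N$. Two small cosmetic points: the symbol $N^*$ collides with the paper's notation for dual Yetter--Drinfeld modules (here it is just the codomain of $w_0^N$, better denoted $N_\ell$), and the phrase ``length-function theory'' is heavier machinery than is actually needed --- the inversion-counting step you outline already closes the gap.
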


\begin{lemma} \cite[Lemma\,8(iii)]{a-HeckYam08}\label{le:finrs2}
  Let $\cC $ be a Cartan scheme and $\Rwg $ a root system of type $\cC $.
  Let $N,P\in \Ob (\Wg (\Rwg ))$ and $w\in \Hom (N,P)\subset
  \Hom (\Wg (\Rwg ))$ such that $w(\rsys _+(N))\subset \rsys _-(P)$. Then
  $\rsys (N)$ is finite.
\end{lemma}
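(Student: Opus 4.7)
The plan is to set up an inversion-set argument analogous to the one for classical Weyl groups. For any morphism $v\in \Hom(M,Q)$ in $\Wg(\Rwg)$ define
\[
N(v)=\{\al \in \rsys _+(M)\,|\,v(\al )\in \rsys _-(Q)\}.
\]
The hypothesis $w(\rsys _+(N))\subset \rsys _-(P)$ says precisely $N(w)=\rsys _+(N)$, so by (R1) it suffices to bound $|N(w)|$ by a finite number. I will do this by writing $w$ as a product of generating morphisms and showing that each generating factor increases the cardinality of $N(\cdot )$ by at most $1$.

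By definition of $\Hom (\Wg (\Rwg))$, write
\[
w=s_{i_n}^{N_{n-1}}\circ s_{i_{n-1}}^{N_{n-2}}\circ \cdots \circ s_{i_1}^{N_0},
\]
with $N_0=N$, $N_k=\rfl _{i_k}(N_{k-1})$ for $1\le k\le n$, and $N_n=P$. Set $v_k=s_{i_k}^{N_{k-1}}\circ \cdots \circ s_{i_1}^{N_0}\in \Hom (N,N_k)$, so $v_0=\id _N$ and $v_n=w$. The goal is to prove by induction on $k$ that $|N(v_k)|\le k$; applied at $k=n$ this yields $|\rsys _+(N)|\le n$, and hence $\rsys (N)$ is finite.

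The key step uses the standard observation that each simple reflection $s_i^M$ swaps $\al _i$ and $-\al _i$ and restricts to a bijection $\rsys _+(M)\setminus \{\al _i\}\to \rsys _+(\rfl _i(M))\setminus \{\al _i\}$. Indeed, if $\al =\sum _j n_j\al _j\in \rsys _+(M)\setminus \{\al _i\}$, then some $n_j$ with $j\neq i$ is positive, and the $j$-th coefficient of $s_i^M(\al )$ equals $n_j$; by (R3) $s_i^M(\al )\in \rsys (\rfl _i(M))$, and together with (R1), (R2) this forces $s_i^M(\al )\in \rsys _+(\rfl _i(M))\setminus \{\al _i\}$. The inverse of this map is $s_i^{\rfl _i(M)}$, giving the claimed bijection.

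With this in hand the inductive step is a direct case analysis. Let $\beta =v_{k-1}^{-1}(\al _{i_k})\in \rsys (N)$ (this makes sense because $v_{k-1}$ is an isomorphism and permutes $\rsys (N)$ onto $\rsys (N_{k-1})$ by (R3)). If $\beta \in \rsys _+(N)$, then using the key step one checks $N(v_k)=N(v_{k-1})\sqcup \{\beta \}$; if $\beta \in \rsys _-(N)$, then $N(v_k)=N(v_{k-1})\setminus \{-\beta \}$. Either way $|N(v_k)|\le |N(v_{k-1})|+1$, completing the induction. The only real obstacle is keeping the groupoid direction straight when transcribing the classical Coxeter-group proof, but axioms (C1), (C2), (R1)--(R3) make every step go through without modification.
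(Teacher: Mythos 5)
Your proof is correct, and it is the standard inversion-set argument for Weyl groupoids, which is also what the cited reference \cite[Lemma\,8(iii)]{a-HeckYam08} does; the paper itself gives no independent proof, just the citation. The key bijection $\rsys_+(M)\setminus\{\al_i\}\to\rsys_+(\rfl_i(M))\setminus\{\al_i\}$ is established correctly (with (R2) ruling out positive roots supported only on $\al_i$, and (C1), (C2) providing the inverse), and the case analysis in the inductive step, which you leave to the reader, checks out: in the case $\beta\in\rsys_-(N)$ one has $-\beta\in N(v_{k-1})\setminus N(v_k)$ and all other elements of $\rsys_+(N)$ are fixed in or out of the inversion set, and dually for $\beta\in\rsys_+(N)$. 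Since $N(w)=\rsys_+(N)$ under the hypothesis and $|N(v_k)|\le k$ by induction, the conclusion follows via (R1).
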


In general the Cartan matrices $A^N$ are not of finite type if the Weyl
groupoid $\Wg (\Rwg )$ is finite \cite[Prop.\,5.1(2)]{p-CH08}. 
But in the special case, when the Cartan scheme $\cC $ is {\em standard},
i.\,e. $\Cm ^N = \Cm^M$ for all $N,M \in \Ob(\Wg (\cC ))$, the following
holds (see \cite[Thm.\,3.3]{p-CH08} for a slightly different proof).

\begin{corol}\label{co:standard}
Let $\cC $ be a standard Cartan scheme with generalized Cartan matrix $\Cm
=\Cm ^N$ for all $N \in \Ob(\Wg (\cC ))$. Let $\Rwg $ be a root system of type
$\cC $.
Then the following are equivalent.
\begin{enumerate}
\item $\Wg (\Rwg )$ is finite.
\item $\Cm $ is a Cartan matrix of finite type.
\end{enumerate}
\end{corol}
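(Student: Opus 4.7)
The plan is to use Lemma~\ref{le:finrs} together with the classical fact that the Weyl group of a generalized Cartan matrix is finite if and only if the matrix is of finite type (see \cite[Ch.~4]{b-Kac90}).

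The key observation, exploiting the standardness of $\cC $, is that each reflection $s_i^N$ acts on $\ndZ ^I$ as one and the same map $s_i$ with $s_i(\al _j)=\al _j-\cm _{ij}\al _i$, independently of $N\in \cM $. Consequently, for every morphism $(P,f,N)\in \Hom (\Wg (\Rwg ))$ the underlying automorphism $f\in \Aut (\ndZ ^I)$ is a product of such $s_i$'s and hence lies in the classical Weyl group $W(\Cm )$. Conversely, fixing an object $N\in \cM $, any word $w=s_{i_k}\cdots s_{i_1}\in W(\Cm )$ is the underlying automorphism of the composable chain
\[
  s_{i_k}^{\rfl _{i_{k-1}}\cdots \rfl _{i_1}(N)}\cdots s_{i_2}^{\rfl _{i_1}(N)}s_{i_1}^N \in \Hom (N,\rfl _{i_k}\cdots \rfl _{i_1}(N)),
\]
so the image of $\Hom (\Wg (\Rwg ))$ in $\Aut (\ndZ ^I)$ is exactly $W(\Cm )$.

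Once this identification is in place, the two implications follow quickly. For (2)$\Rightarrow $(1): if $\Cm $ is of finite type then $W(\Cm )$ is finite, so $\rsys \re (N)\subseteq W(\Cm )\cdot \{\al _i\mid i\in I\}$ is finite for every $N\in \cM $, and an application of Lemma~\ref{le:finrs} gives that $\Wg (\Rwg )$ is finite. For (1)$\Rightarrow $(2): finiteness of $\Hom (\Wg (\Rwg ))$ forces $W(\Cm )$ to be finite by the surjection just exhibited, and Kac's classical criterion then concludes that $\Cm $ is of finite type.

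The only nontrivial ingredient that must be borrowed is Kac's classification of finite type generalized Cartan matrices via their Weyl groups. Everything else is a direct consequence of standardness, which is precisely what makes all morphisms of $\Wg (\Rwg )$ act on $\ndZ ^I$ through a single fixed classical Weyl group, so that finiteness questions about $\Wg (\Rwg )$ reduce to finiteness questions about $W(\Cm )$.
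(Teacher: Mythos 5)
Your proof is correct, and the direction (1)$\Rightarrow$(2) is essentially identical to the paper's: use standardness to obtain a surjection $\Hom(\Wg(\Rwg))\to W(\Cm)$, then invoke Kac's classification.

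For (2)$\Rightarrow$(1) you take a genuinely different, shorter route. The paper uses the fact that $W(\Cm)$ (being finite) has a longest element $w$ with $w(\al_i)=-\al_{\tau(i)}$ for a permutation $\tau$, rewrites $w$ as a groupoid morphism (possible by standardness), and then applies Lemma~\ref{le:finrs2} to conclude that $\rsys(N)$ itself is finite, after which Lemma~\ref{le:finrs} ((1)$\Leftrightarrow$(4)) finishes. You instead observe directly that every real root of $N$ has the form $f(\al_i)$ with $f\in W(\Cm)$, so $\rsys\re(N)\subseteq W(\Cm)\cdot\{\al_1,\dots,\al_\theta\}$ is finite, and then apply the (3)$\Leftrightarrow$(4) part of Lemma~\ref{le:finrs}. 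Both arguments are valid; yours avoids the longest-element trick and Lemma~\ref{le:finrs2} entirely, at the price of leaning on the harder implication (3)$\Rightarrow$(4) of Lemma~\ref{le:finrs}, whereas the paper only needs (1)$\Rightarrow$(4), which is the more obvious part of that equivalence. Finally, note that both proofs implicitly use the connectedness hypothesis in Lemma~\ref{le:finrs}; this is not stated in the corollary, but is automatic for the Cartan schemes $\cC(M)$ actually arising in the paper.
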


\begin{proof}
Let $W(\Cm )$ be the Weyl group of the Kac-Moody Lie
algebra of $\Cm $.
In particular, $W(\Cm )$ is isomorphic to the subgroup of $\Aut (\ndZ ^\theta )$
generated by the reflections $s_i\in \Aut (\ndZ ^\theta )$, where
\begin{align*}
  s_i(\al _j)=\al _j-\cm _{ij}\al _i\qquad \text{for all $i,j\in I$}.
\end{align*}

(1)$\Rightarrow$(2):
Since $\cC $ is standard, the map $\Hom(\Wg (\Rwg )) \to
W(\Cm )$, $(P,s,N) \mapsto s$, is well-defined and surjective. Hence $W(\Cm )$
is finite by (1), and $\Cm $ is of finite type.

(2)$\Rightarrow$(1): Since $\Cm $ is of finite type, the Weyl group $W(\Cm )$
acts transitively on bases of the root system. Hence there exists a
permutation $\tau $ of $I$ and an element $w \in W(\Cm )$ such that $w(\al _i) =
-\al _{\tau (i)}$ for all $i\in I$. Let $n\in \ndN $ and $i_1,\ldots ,i_n\in
I$ such that $w=s_{i_n}\cdots s_{i_2}s_{i_1}$.
Then $w=s_{i_n}^{\rfl _{i_{n-1}}\cdots \rfl _{i_1}(N)}
\cdots s_{i_2}^{\rfl _{i_1}(N)}s_{i_1}^N$ for all $N\in \cM $,
since $\cC $ is standard.
Hence $w(\rsys _+(N))\subset
\rsys _-(\rfl _{i_n}\cdots \rfl _{i_2}\rfl _{i_1}(N))$ for all $N\in \cM $.
This proves (1) by Lemmata~\ref{le:finrs} and \ref{le:finrs2}.
\end{proof}

\begin{remar}
  Let $\cC =\cC (I,\cM ,(\rfl _i)_{i\in I},(\Cm ^N)_{N\in \cM })$ be
  a standard Cartan scheme and $\Rwg =\Rwg (\cC ,(\rsys ^N)_{N\in \cM })$
  a root system of type $\cC $. Define $\Cm =\Cm ^N$ for all $N\in \cM $,
  and let $W(A)$ denote the Weyl group of the Kac-Moody Lie algebra of $\Cm $.
  Then the maps $s _i^N\in \Aut (\ndZ ^\theta )$,
  where $i\in I$ and $N\in \cM $, are independent of $N$.
  Assume that $\rsys (N)=\rsys \re (N)$ for all $N\in \cM $. Then
  $\rsys (N)=\{w(\al _i)\,|\,w\in W(\Cm ),i\in I\}$ for all $N\in \cM $,
  and hence it is independent of $N$. In this case $\Rwg $ is equivalent to an
  action of $W(\Cm )$ on $\cM $.
\end{remar}

There is an important necessary condition for
the finiteness of the Weyl groupoid of a
Cartan scheme. For the proof we need a lemma,
which is a special case of \cite[Lemma\,9]{a-Heck08a}.

\begin{lemma}
  Let $T\subset \mathrm{SL}(2,\ndZ )$ be a non-empty
  subsemigroup generated by matrices of the form
  $\begin{pmatrix}
    a & -b \\ c & -d
  \end{pmatrix}$ with $0<d<b<a$. Then all elements of $T$ are of this form.
  In particular, $\id \notin T$.
  \label{le:infsemigrp}
\end{lemma}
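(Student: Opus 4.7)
The plan is to show that the set
\[
S=\Bigl\{\begin{pmatrix} a & -b \\ c & -d\end{pmatrix}\in \mathrm{SL}(2,\ndZ )\,\Bigm|\,0<d<b<a\Bigr\}
\]
is closed under matrix multiplication. Since $T$ is generated as a semigroup by elements of $S$, an induction on the length of a product then gives $T\subset S$; and as $\id $ has $b=0$, violating $0<d<b$, the exclusion $\id \notin S\supset T$ is immediate.

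So everything reduces to verifying that $M_1M_2\in S$ whenever $M_1,M_2\in S$. Writing $M_i=\begin{pmatrix} a_i & -b_i\\ c_i & -d_i\end{pmatrix}$ and $M_1M_2=\begin{pmatrix} a & -b\\ c & -d\end{pmatrix}$, one reads off the entries
\[
a=a_1a_2-b_1c_2,\quad b=a_1b_2-b_1d_2,\quad c=c_1a_2-d_1c_2,\quad d=c_1b_2-d_1d_2.
\]
I would first exploit the integer inequalities $a_1\ge b_1+1$ and $b_2\ge d_2+1$ to record the crude lower bound
\[
b = a_1b_2 - b_1d_2 \ge (b_1+1)b_2 - b_1(b_2-1) = b_1 + b_2,
\]
so in particular $b\ge b_1$ and $b\ge b_2$.

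The key computational observation is that, thanks to the determinant identities $b_ic_i-a_id_i=1$ in each factor, massive cancellation occurs in the combinations
\begin{align*}
b_1d-d_1b &= b_2(b_1c_1-a_1d_1) = b_2,\\
a_2b-b_2a &= b_1(b_2c_2-a_2d_2) = b_1.
\end{align*}
The first rewrites as $b_1(b-d)=(b_1-d_1)b-b_2\ge b-b_2\ge b_1>0$, giving $b>d$, and then $b_1d=d_1b+b_2>0$ yields $d>0$. The second rewrites as $b_2(a-b)=(a_2-b_2)b-b_1\ge b-b_1\ge b_2>0$, giving $a>b$. Together these establish $0<d<b<a$, so $M_1M_2\in S$.

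The only non-routine step is spotting the two cancellation identities above; once they are in hand, the remaining bookkeeping with integer inequalities poses no obstacle.
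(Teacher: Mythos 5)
Your proof is correct and complete. The paper itself supplies no argument for this lemma: it is stated with the remark that it ``is a special case of \cite[Lemma\,9]{a-Heck08a}'' and the proof is delegated to that reference, so there is no in-text derivation to compare against.

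Your strategy of showing that the set $S$ of matrices $\begin{pmatrix} a & -b\\ c& -d\end{pmatrix}$ in $\mathrm{SL}(2,\ndZ)$ with $0<d<b<a$ is closed under multiplication is the natural self-contained route; closure plus the trivial base case give $T\subset S$ by induction on word length, and $\id\notin S$ since $\id$ has $b=0$. Your two cancellation identities check out: with $b=a_1b_2-b_1d_2$, $d=c_1b_2-d_1d_2$, and $b_ic_i-a_id_i=1$, one gets $b_1d-d_1b=b_2(b_1c_1-a_1d_1)=b_2$ and $a_2b-b_2a=b_1(b_2c_2-a_2d_2)=b_1$, and the subsequent integer estimates (using $a_i\ge b_i+1$, $b_i\ge d_i+1$ and $b\ge b_1+b_2$) cleanly yield $0<d<b<a$. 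One small observation worth recording: there is no need to verify separately that the $(2,1)$-entry of the product is positive, since $bc-ad=1$ together with $a,b,d>0$ forces $c>0$; this is consistent with your (correct) reading that the defining inequalities only constrain $a,b,d$.
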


\begin{propo}\label{pr:aij=-1}
  Let $\cC =\cC (I,\cM ,(\rfl _i)_{i\in I},(\Cm ^N)_{N\in \cM })$
  be a Cartan scheme of rank two.
  If $\Hom (\Wg (\cC ))$ is a finite set, then there exist $N\in \cM $
  and $i,j\in I$ such that $\cm ^N_{ij}\in \{0,-1\}$.
\end{propo}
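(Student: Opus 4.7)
The plan is to argue by contradiction. Suppose that for every $N\in \cM $ and every pair $i\ne j$ in $I=\{1,2\}$ one has $a^N_{ij}\le -2$; the goal is to build a loop in $\Wg (\cC )$ based at a fixed object $N\in \cM $ whose underlying matrix must be $\id $ by finiteness of $\Hom (\Wg (\cC ))$, yet whose factorization in $\mathrm{SL}(2,\ndZ )$ is of a shape forbidden by Lemma~\ref{le:infsemigrp}.

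The basic building block is, for each $M\in \cM $, the two-step morphism
\[
t_M \;:=\; s_1^{\rfl _2(M)}\circ s_2^M \;\in\; \Hom \bigl(M,\,\rfl _1\rfl _2(M)\bigr).
\]
Writing $s_2^M$ and $s_1^{\rfl _2(M)}$ as integer matrices in the basis $(\al _1,\al _2)$ and setting $e:=-a^M_{21}$, $f:=-a^{\rfl _2(M)}_{12}$, a direct computation gives
\[
t_M=\begin{pmatrix} ef-1 & -f\\ e & -1\end{pmatrix}\in \mathrm{SL}(2,\ndZ ).
\]
With $a:=ef-1$, $b:=f$, $c:=e$, $d:=1$ this has the form $\begin{pmatrix}a & -b\\ c & -d\end{pmatrix}$, and the contradiction hypothesis $e,f\ge 2$ makes the three inequalities $0<d<b<a$ automatic (since $f(e-1)\ge 2>1$). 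Hence every $t_M$ is one of the matrices generating the semigroup treated in Lemma~\ref{le:infsemigrp}.

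To manufacture the loop, first observe that finiteness of $\Hom (\Wg (\cC ))$ forces $\cM $ itself to be finite (identities of distinct objects are distinct morphisms) and that $\rfl _1\rfl _2$ is a permutation of $\cM $ with inverse $\rfl _2\rfl _1$. Hence the orbit $\{(\rfl _1\rfl _2)^k(N)\}_{k\ge 0}$ is periodic; pick the minimal $n\ge 1$ with $(\rfl _1\rfl _2)^n(N)=N$ and set $M_k:=(\rfl _1\rfl _2)^k(N)$. Then the composite $w:=t_{M_{n-1}}\circ \cdots \circ t_{M_0}$ lies in the \emph{finite} group $\Hom (N,N)$, so $w^m=\id _N$ for some $m\ge 1$. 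Reading this as an equality of matrices in $\mathrm{SL}(2,\ndZ )$ expresses $\id $ as a product of $mn$ generators of the form above, contradicting Lemma~\ref{le:infsemigrp}. The only step that needs genuine care is the first one---the explicit matrix identification of $t_M$ together with the verification that $0<d<b<a$ follows automatically from $e,f\ge 2$; once that is in hand, the orbit argument and the invocation of Lemma~\ref{le:infsemigrp} are essentially formal.
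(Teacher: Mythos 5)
Your proof is correct and follows the same strategy as the paper's: encode the alternating compositions $s_1^{\rfl_2(\cdot)} s_2^{\cdot}$ as $\mathrm{SL}(2,\ndZ)$ matrices, verify under the contradiction hypothesis $a^N_{12},a^N_{21}\le -2$ that each two-step block $t_M$ has the form $\left(\begin{smallmatrix}a & -b\\ c & -d\end{smallmatrix}\right)$ with $0<d<b<a$, and invoke Lemma~\ref{le:infsemigrp}. Your derivation of the contradiction is a small but clean variation: rather than arguing directly (as the paper does, somewhat tersely) that the set of alternating products is infinite, you use finiteness of $\cM $ and of the isotropy group $\Hom(N,N)$ to produce a loop whose power has matrix $\id$, forcing $\id\in T$ against the lemma. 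Both arguments rest on exactly the same matrix computation and the same key lemma, so I would count them as essentially the same proof; your version just makes the appeal to groupoid finiteness more explicit.
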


\begin{proof}
  For all $a\in \ndZ $ let
  \[ \eta _1(a)=
  \begin{pmatrix}
    -1 & a \\ 0 & 1
  \end{pmatrix}
  ,\quad \eta _2(a)=
  \begin{pmatrix}
    1 & 0 \\ a & -1
  \end{pmatrix}. \]
  If $N\in \cM _2(M)$, then
  \begin{equation}
  \begin{aligned}
    &s_1^{\rfl _2\cdots \rfl _1\rfl _2(M)}\cdots 
    s_2^{\rfl _1\rfl _2(M)}s_1^{\rfl _2(M)}s_2^M\\
    &\quad =\eta _1(-a_{12}^{\rfl _2\cdots \rfl _1\rfl _2(M)})\cdots 
    \eta _2(-a_{21}^{\rfl _1\rfl _2(M)})\eta _1(-a_{12}^{\rfl _2(M)})
    \eta _2(-a_{21}^M).
  \end{aligned}
    \label{eq:sss}
  \end{equation}
  Assume that $a^N_{12},a^N_{21}<-1$ for all $N\in \cM _2(M)$.
  Then the matrices
  \begin{align*}
    \eta _1(a')\eta _2(a'')=
    \begin{pmatrix}
      a'a''-1 & -a'\\
      a'' & -1
    \end{pmatrix},
  \end{align*}
  where $a'=-a_{12}^{\rfl _2(N)}$, $a''=-a_{21}^N$ for some $N\in \cM _2(M)$,
  satisfy the assumption of Lemma~\ref{le:infsemigrp}.
  Lemma~\ref{le:infsemigrp} implies that the set of products given
  in Eq.~\eqref{eq:sss} is infinite. This is a contradiction to the finiteness
  of $\Hom (\Wg (\cC ))$.
\end{proof}

\section{Weyl groupoids and root systems for Nichols algebras}

Let $\icyH $ denote the set of isomorphism classes of \fd{} irreducible
Yetter-Drinfeld modules over $H$. For any $X\in \ydH $ let $[X]$
denote the isomorphism class of $X$. Let $\theta \in \ndN $, $\Ib
=\{1,2,\ldots ,\theta \}$, and
$\cM _\theta =\big(\icyH \big) ^\theta $. The standard basis of $\ndZ ^\theta
$ is denoted by
$\{\al _1,\ldots ,\al _\theta \}$.

\begin{defin}
  Let $N\in \cM _\theta $. Choose $N_1,\ldots ,N_\theta \in \ydH $ such that
  $N=([N_1],\ldots ,[N_\theta ])$. 
  Define
  $V(N)=N_1\oplus\ldots \oplus N_\theta $.
  %and
  %assume that all finite tensor powers of $V(N)$ are semisimple in $\ydH $.
  %By Thm.~\ref{th:BVirred}(1), we choose an index set $(L,\le)$ and a family 
  %$(W_l)_{l\in L}$ such that $\NA (V(N))\simeq \ot _{l\in L}\NA (W_l)$.
  Assume that there exists a totally ordered index set $(L,\le)$ and a family 
  $(W_l)_{l\in L}$ of \fd{} irreducible $\ndN _0^\theta $-graded objects in
  $\ydH $
  such that $\NA (V(N))\simeq \ot _{l\in L}\NA (W_l)$ as $\ndN _0^\theta
  $-graded objects in $\ydH $, where $\deg N_i=\al _i$ for all $i\in \Ib $.
  Define
  \begin{align}
    \rsyst _+(N)=&\{(C, \al )\,|\, C\in \icyH ,
    \al \in \ndN _0^\theta \setminus 0, \label{eq:proots}\\
    &\phantom{ \{(C, \al )\,|\, }\exists l\in L : C=[W_l],\al =\deg
    W_l\},\notag \\
    \rsyst _-(N)=&\{(C^*,-\al )\,|\,(C,\al )\in \rsyst _+(N)\},
    \label{eq:nroots}\\
    \rsyst (N)=&\rsyst _+(N)\cup \rsyst _-(N),
  \end{align}
  and for all $(C,\al )\in \rsyst _+(N)$ let
  \begin{align}
    \mult _N(C,\al )=\mult _N(C^*,-\al )=
    |\{l\in L\,|\,C=[W_l],\al =\deg W_l\}|.
    \label{eq:mult}
  \end{align}
  The sets
\begin{gather}
  \rsys _+(N)=\{\deg W_l\,|\,l\in L\},\quad 
  \rsys _-(N)=-\rsys _+(N),\\
  \rsys (N)=\rsys _+(N)\cup \rsys _-(N)
  \label{eq:rsys}
\end{gather}
  are called the \textit{set of positive roots of} $N$,
  the \textit{set of negative roots of} $N$,
  and the \textit{set of roots of} $N$, respectively.
  For any $N\in \cM _\theta $ we say that $\rsyst (N)$ \textit{is defined}, if
  the assumptions of Def.~\ref{de:rsysN} are fulfilled for $N$.
  \label{de:rsysN}
\end{defin}

\begin{remar}
  In \cite{a-Heck06a}, the root system of a Nichols algebra of diagonal type
  was defined using Kharchenko's PBW-basis. Viewing the Yetter-Drinfeld
  modules $W_l$ as generalized PBW generators, see Rem.~\ref{re:Khar},
  the definition of $\rsys (N)$
  in Eq.~\eqref{eq:rsys} generalizes the root system given in \cite{a-Heck06a}. 
\end{remar}

Let $N\in \cM _\theta $. By Thm.~\ref{th:BVirred}(1), $\rsyst (N)$ is defined,
if all tensor powers of $V(N)$ are semisimple.

By Lemma~\ref{le:undec}, the definitions of $\rsyst _\pm (N)$,
$\rsyst (N)$, $\rsys _\pm (N)$, and $\rsys (N)$
only depend on $N$, but not on the choice of
$N_1,\ldots ,N_\theta $, $(L,\le )$, and the family $(W_l)_{l\in L}$.

\begin{lemma}
  Let $N,P\in \cM _\theta $ such that $\rsyst (N)$ and $\rsyst (P)$ are
  defined. If $\rsyst (N)=\rsyst (P)$ then $N=P$.
  \label{le:NrsysN}
\end{lemma}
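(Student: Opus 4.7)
The plan is to reduce the statement to the uniqueness of the rank-one constituents in the tensor product decomposition of $\NA(V(N))$, which is precisely what Lemma~\ref{le:BVdec1} provides. Essentially, the tuple $N$ is encoded in $\rsyst(N)$ by the pairs whose second component is a standard basis vector $\al_i$.

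First I would observe that under the hypothesis that $\rsyst(N)$ is defined, every element $(C,\al)\in \rsyst_+(N)$ satisfies $\al\in\ndN_0^\theta\setminus\{0\}$, while every element of $\rsyst_-(N)$ has its second component in $-(\ndN_0^\theta\setminus\{0\})$. These two subsets of $\ndZ^\theta$ are disjoint, so the decomposition $\rsyst(N)=\rsyst_+(N)\sqcup\rsyst_-(N)$ can be read off from $\rsyst(N)$ alone. The same holds for $P$, and therefore the assumption $\rsyst(N)=\rsyst(P)$ yields $\rsyst_+(N)=\rsyst_+(P)$.

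Next, I would apply Lemma~\ref{le:BVdec1} to the fixed representatives $N_1,\dots,N_\theta$ and to a family $(W_l)_{l\in L}$ realising the decomposition $\NA(V(N))\simeq\ot_{l\in L}\NA(W_l)$. For each $i\in\Ib$ there is a unique $l(i)\in L$ with $\deg W_{l(i)}\in\ndN\al_i$, and moreover $\deg W_{l(i)}=\al_i$ and $W_{l(i)}\simeq N_i$. Translating this into the language of Def.~\ref{de:rsysN}, $([N_i],\al_i)$ is the unique element of $\rsyst_+(N)$ whose second component lies in $\ndN\al_i$. Choosing analogous representatives $P_1,\dots,P_\theta$ of $P$, the same lemma forces $([P_i],\al_i)$ to be the unique element of $\rsyst_+(P)$ with second component in $\ndN\al_i$.

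Finally, from $\rsyst_+(N)=\rsyst_+(P)$ and this uniqueness, I conclude $([N_i],\al_i)=([P_i],\al_i)$, hence $[N_i]=[P_i]$ for every $i\in\Ib$. Since $N=([N_1],\dots,[N_\theta])$ and $P=([P_1],\dots,[P_\theta])$, this gives $N=P$. There is no real obstacle here: once Lemma~\ref{le:BVdec1} is available, the argument is just unwinding the definition of $\rsyst(N)$ and recognising the degree-$\al_i$ part as the unique carrier of the information about $N_i$.
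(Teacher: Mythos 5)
Your proof is correct, and it is the natural unwinding of the paper's one-line proof. The key observation you make — that $[N_i]$ is recoverable from $\rsyst_+(N)$ as the unique first component appearing in a pair whose second component lies in $\ndN\al_i$, and that this is exactly what Lemma~\ref{le:BVdec1} guarantees — is the right mechanism. One small note: the paper's own proof simply says "See Lemma~\ref{le:undec}," but Lemma~\ref{le:undec} concerns the uniqueness of a tensor decomposition for a \emph{single} isomorphism type of Nichols algebra, and there is no a priori isomorphism $\NA(V(N))\simeq\NA(V(P))$ to feed into it; your citation of Lemma~\ref{le:BVdec1} is the more accurate pointer, and your explicit separation of $\rsyst_+$ from $\rsyst_-$ via the sign of the second component closes a step the paper leaves implicit. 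Substantively, though, both arguments rest on the same idea.
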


\begin{proof}
  See Lemma~\ref{le:undec}.
\end{proof}

\begin{defin}\label{de:cm}
  Let $N=([N_1],\ldots ,[N_\theta ])\in \cM _\theta $, $V(N)=N_1\oplus
  \cdots \oplus N_\theta $, and $i\in \Ib $. We say that
  $N$ is $i$-\textit{finite}, if
  for all $j\in \Ib \setminus \{i\}$
  $(\ad _c\,N_i)^h(N_j)=0$ for some $h\in \ndN $,
  and $\Ib $-\textit{finite}, if $N$ is $l$-finite for all $l\in \Ib $.
%\end{defin}
%
%\begin{defin}\label{de:cm}
%  Let $N=([N_1],\ldots ,[N_\theta ])\in \cM _\theta $, $V(N)=N_1\oplus
%  \cdots \oplus N_\theta $, and $i\in \Ib $.

  If $N$ is not $i$-finite, then let $r_i(N)=N$.

  Assume now that $N$ is $i$-finite. Let
  $\Cm ^N=(\cm ^N_{ij})_{i,j\in \Ib }$, where
  \begin{align}
    -\cm ^N_{i j}=&\sup \{h\in \ndN _0\,|\, (\ad _c\,N_i)^h(N_j)\not=0
    \text{ in $\NA (V(N))$}\}
    \label{eq:cm}
  \end{align}
  for $j\in \Ib \setminus \{i\}$, and $\cm ^N_{ii}=2$. Let
  $\rfl _i(N)=([N'_1],\dots ,[N'_\theta ])\in \cM _\theta $,
  where $N'_i=N_i^*$ and $N'_j=(\ad _c N_i)^{-\cm ^N_{ij}}(N_j)$ for all $j\in
  \Ib \setminus \{i\}$.
  Define $s_i^N\in \Aut (\ndZ ^\theta )$ by
  \begin{align}
    s_i^N(\al _j)=&\al _j-\cm ^N_{ij}\al _i& &
    \text{for all $j\in \Ib $,}\\
    s_i^N(C,\gamma )=&(C,s_i^N(\gamma ))& &
    \text{for all $C\in \icyH $, $\gamma \in \ndZ ^\theta $.}
    \label{eq:siNC}
  \end{align}
%  \end{align}
%If $N=([N_1],\ldots ,[N_\theta ])\in \cM _\theta $, $V(N)=N_1\oplus
%\cdots \oplus N_\theta $, and $i\in \Ib $, then
%For any $N\in \cM _\theta $ and $i\in \Ib $ we
%say that \textit{$\rfl _i(N)$ is defined} if
%$(\ad \, \NA (N_i))(N_j)$ is \fd{} for all $j\in \Ib \setminus \{i\}$.
%the assumptions of Def.~\ref{de:cm} are satisfied for $i$ and $N$.
%For $N\in \cM _\theta $ and $i\in \Ib $ such that $\rfl _i(N)$ is defined,
%we let
%\begin{align}
%
\end{defin}

By definition, the map $s_i^N$ is a reflection in the sense of
\cite[Ch.\,V,\S 2.2]{b-BourLie4-6}, since $a^N_{ii}=2$.
Note that the definition of $\rfl _i$ is slightly different from the
definition of $\mathcal{R}_i$ in
\cite[Eq.\,(3.16)]{p-AHS08}.

The next proposition shows how to compute the Yetter-Drinfeld module
$(\ad _c V)^n(W)$, where
$n\in \ndN $ and $V,W\in \ydH $. Recall the definition of $S_n$ from
Eq.~\eqref{eq:Sn}.

\begin{propo}\label{pr:Tn}
  Let $n\in \ndN $ and $V,W\in \ydH $. Then the image of the linear map
  $(S_n\ot \id )T_n\in \End \,(V^{\ot n}\ot W)$, where
  \begin{align*}
    T_n=&(\id -c_{n,n+1}^2c_{n-1,n}\cdots c_{12})\cdots 
    (\id -c_{n,n+1}^2c_{n-1,n})(\id -c_{n,n+1}^2),
  \end{align*}
  is isomorphic to $(\ad _c V)^n(W)\subset \NA (V\oplus W)$ in $\ydH $.
\end{propo}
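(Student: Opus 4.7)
The plan is to express $(\ad _c v_1)(\ad _c v_2)\cdots (\ad _c v_n)(w)$ as the image of an explicit operator applied to $v_1\ot v_2\ot \cdots \ot v_n\ot w$, translate to the Nichols algebra via the realisation $\NA (V\oplus W)_{n+1}\simeq \img S_{n+1}$, and then reduce to a combinatorial identity relating $S_{n,1}$, $T_n$ and an auxiliary operator $T'_n$.

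First, starting from $(\ad _c v)(y)=\mu \circ (\id -c)(v\ot y)$ for primitive $v$ and using the hexagon identity $c_{V,X\ot Y}=(\id _X\ot c_{V,Y})(c_{V,X}\ot \id _Y)$ to describe the braiding of $V$ with a tensor of length $n$, induction on $n$ yields in $T(V\oplus W)_{n+1}$
\begin{equation*}
(\ad _c v_1)\cdots (\ad _c v_n)(w)=T'_n(v_1\ot \cdots \ot v_n\ot w),
\end{equation*}
where $T'_n=(\id -c_{n,n+1}c_{n-1,n}\cdots c_{1,2})\cdots (\id -c_{n,n+1}c_{n-1,n})(\id -c_{n,n+1})$ is the variant of $T_n$ with $c_{n,n+1}^2$ replaced by $c_{n,n+1}$ in every factor. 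Since $\NI _{T(V\oplus W)}=\oplus _{m\ge 2}\ker S_m$, applying $S_{n+1}$ induces a Yetter-Drinfeld isomorphism $\NA (V\oplus W)_{n+1}\simeq \img S_{n+1}$, so $(\ad _c V)^n(W)\simeq \img (S_{n+1}\circ T'_n\vert _{V^{\ot n}\ot W})$ in $\ydH $. Using \eqref{eq:Sn} in the form $S_{n+1}=(S_n\ot \id )\,S_{n,1}$, the proposition reduces to the operator identity $S_{n,1}\,T'_n=T_n$ on $V^{\ot n}\ot W$.

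The final identity would be proved by induction on $n$. The case $n=1$ is $(\id +c_{1,2})(\id -c_{1,2})=\id -c_{1,2}^2$. For the inductive step, write $\xi _k:=c_{n,n+1}c_{n-1,n}\cdots c_{k,k+1}$ (with $\xi _{n+1}=\id $), so that $S_{n,1}=\sum _{k=1}^{n+1}\xi _k$, $T'_n=\prod _{k=1}^n(\id -\xi _k)$ and $T_n=\prod _{k=1}^n(\id -c_{n,n+1}\xi _k)$. The factors and summands indexed by $k\ge 2$ act only on positions $2,\dots ,n+1$ and, up to an index shift, coincide with $S_{n-1,1}$, $T'_{n-1}$ and $T_{n-1}$; thus the induction hypothesis handles them. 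The residual $\xi _1$-terms are reconciled by repeated application of the braid relation $c_{i,i+1}c_{i+1,i+2}c_{i,i+1}=c_{i+1,i+2}c_{i,i+1}c_{i+1,i+2}$, which holds in $\ydH $. The main obstacle is precisely this combinatorial identity: a direct expansion of both sides is routine but bookkeeping-heavy, and a cleaner conceptual route is to recognise $S_{n,1}$ as the sum over minimal coset representatives of $S_n\backslash S_{n+1}$ in the braid-group algebra and exploit the corresponding telescoping, although the braid relations are still needed in the end.
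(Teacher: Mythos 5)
Your proposal is correct and follows essentially the same route as the paper: express $(\ad_c v_1)\cdots(\ad_c v_n)(w)$ inside $T(V\oplus W)$ via the operator $T'_n$, use that $\img S_{n+1}\simeq \NA(V\oplus W)(n+1)$ in $\ydH$, and then factor $S_{n+1}=(S_n\ot\id)S_{n,1}$ to reduce to the identity $S_{n,1}T'_n=T_n$, which both you and the paper attribute to an induction on $n$ using the braid relation. The paper leaves the combinatorial step at the same level of detail as you do, so the two proofs match in both structure and depth.
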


\begin{proof}
  The kernel of $S_{n+1}\in \End ((V\oplus W)^{\ot n+1})$
  is $\NI _{T(V\oplus W)}(n+1)$, see
  Sect.~\ref{sec:prelims}.
  Hence the image of $S_{n+1}$
  is isomorphic to $\NA (V\oplus W)(n+1)$ in $\ydH $.
  The lemma follows from
  \begin{align}
    (S_n\ot \id )T_n(v_1\ot \cdots \ot v_n\ot w)
    =S_{n+1}( (\ad _c v_1)\cdots (\ad _c v_n)(w))
  \end{align}
  for all $v_1,\dots,v_n\in V$, $w\in W$, where
  $$\ad _c v: (V\oplus W)^{\ot k} \to (V\oplus W)^{\ot k+1}, \quad
  x\mapsto v\ot x-(v\_{-1}\cdot x)\ot v\_0$$
  for all $k\in \ndN _0$, $v\in V$, and $x\in (V\oplus W)^{\ot k}$.
  By Eq.~\eqref{eq:Sn} it suffices to show that
  \begin{align*}
    T_n(v_1\ot \cdots \ot v_n\ot w)=
    S_{n,1}((\ad _c v_1)\cdots (\ad _c v_n)(w))
  \end{align*}
  for all $v_1,\dots,v_n\in V$, $w\in W$. This follows by induction on $n$
  from the braid equation for $c$.
\end{proof}

  Let $N\in \cM _\theta $ and $i\in \Ib $.
  Let $\cK _i^N=\NA (V(N))^{\co \,\NA (N_i)}$
  be the algebra of right coinvariant
  elements with respect to the coaction $(\id \ot \pi )\copr _{\NA (V(N))}$,
  where
  $\pi :\NA (V(N))\to \NA (N_i)$ is the canonical projection, see
  \cite[Sect.\,3.2]{p-AHS08}.
  Further, $\NA (V(N))$, $\cK _i^N$, and $\NA (N_i)$ are $\ndZ^\theta $-graded
  objects in $\ydH $ with $\deg N_j=\al _j$ for all $j\in \Ib $.
  Then
  \begin{align}
    \NA (V(N))\simeq \cK _i^N\ot \NA (N_i)
    \label{eq:KBV}
  \end{align}
  as $\ndN _0^\theta $-graded objects in $\ydH $, see
  \cite[Lemma\,3.2(ii)]{p-AHS08} and the discussion in
  \cite[Sect.\,3.4]{p-AHS08}.

\begin{lemma}
  Let $N\in \cM _\theta $ and $i\in \Ib $. Assume that $\rsyst (N)$
  is defined.
  
  (1)
  Let $L$ and $(W_l)_{l\in L}$ as in Def.~\ref{de:rsysN}.
  Then $\cK _i^N\simeq \ot _{l\in L,l\not=l(i)} \NA (W_l)$
  as $\ndN _0^\theta $-graded objects in $\ydH $.

  (2)
  Let $i,j \in \Ib $, $i \neq j$, and $n\in \ndN _0$ such that
  $(\ad _c N_i)^n(N_j)\not=0$. Then
  $\al _j+m\al _i\in \rsys (N)$ for all $0 \leq m \leq n$.
  \label{le:Kroots}
\end{lemma}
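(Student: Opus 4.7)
The plan is to deduce both parts from two descriptions of $\NA(V(N))$: the smash-product decomposition $\NA(V(N))\simeq \cK_i^N\ot \NA(N_i)$ from \eqref{eq:KBV} and the $\NA(W_l)$-tensor decomposition guaranteed by Def.~\ref{de:rsysN}. Part (1) will follow from the cancellation Lemma~\ref{le:KBiso}, and then a degree-counting analysis of $\cK_i^N$ based on (1) will yield (2).

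For (1), I would first invoke Lemma~\ref{le:BVdec1} to pick out the unique index $l(i)\in L$ with $W_{l(i)}\simeq N_i$ and $\deg W_{l(i)}=\al_i$. Since each $\ndN_0^\theta$-homogeneous component of $\ot_{l\in L}\NA(W_l)$ involves only finitely many nontrivial tensor factors, the braiding of $\ydH$ lets me reorder factors on each graded component to obtain an isomorphism
\[
\ot_{l\in L}\NA(W_l)\simeq \Big(\ot_{l\in L,\,l\not=l(i)}\NA(W_l)\Big)\ot \NA(N_i)
\]
of $\ndN_0^\theta$-graded objects in $\ydH$. Chaining this with the two decompositions of $\NA(V(N))$ gives $\cK_i^N\ot \NA(N_i)\simeq \big(\ot_{l\not=l(i)}\NA(W_l)\big)\ot \NA(N_i)$, and Lemma~\ref{le:KBiso} applied with $B=\NA(N_i)$ (which has trivial degree-zero piece and finite-dimensional homogeneous components) cancels the $\NA(N_i)$-factor and yields (1).

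For (2), first note that $(\ad_c N_i)^m(N_j)\not=0$ for all $0\le m\le n$, since otherwise a higher power would vanish. The key input is that each of these subspaces lies in $\cK_i^N$: indeed $N_j\subset \cK_i^N$ because $N_j$ is primitive and killed by the canonical projection $\pi:\NA(V(N))\to \NA(N_i)$, and $\cK_i^N$ is stable under the braided left adjoint action of $\NA(V(N))$, invoked from \cite{p-AHS08}. Hence $(\cK_i^N)_{m\al_i+\al_j}\not=0$ for each such $m$. Combining with (1), a nonzero contribution to $(\cK_i^N)_{m\al_i+\al_j}$ arises from a decomposition $\sum_{l\not=l(i)}k_l\deg W_l=m\al_i+\al_j$ with $k_l\in\ndN_0$. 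Vanishing of each $\al_k$-coefficient for $k\not=i,j$ restricts contributing $l$ to $\deg W_l\in \ndN_0\al_i+\ndN_0\al_j$; the constraint $l\not=l(i)$ together with Lemma~\ref{le:BVdec1} excludes $\deg W_l\in\ndN\al_i$, so every contributing $l$ satisfies $(\deg W_l)_j\ge 1$. The $\al_j$-coefficient then forces a unique $l$ with $k_l=1$ and $(\deg W_l)_j=1$, and the $\al_i$-coefficient pins $\deg W_l=m\al_i+\al_j$. Hence $m\al_i+\al_j\in\rsys_+(N)\subset\rsys(N)$.

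The main obstacle is the stability assertion $(\ad_c N_i)^m(N_j)\subset \cK_i^N$: this is not proved in the excerpt, but is a standard property of coinvariants under a Hopf algebra projection in the braided setting and can be cited from \cite{p-AHS08}. The remaining combinatorial bookkeeping and the cancellation step are straightforward given the earlier lemmata.
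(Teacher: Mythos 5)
Your proof is correct and follows essentially the same route as the paper: part (1) via the factorization $\NA(V(N))\simeq\cK_i^N\ot\NA(N_i)$ combined with braided reordering of the tensor factors and the cancellation Lemma~\ref{le:KBiso}, and part (2) via the degree analysis of $\ot_{l\not=l(i)}\NA(W_l)$ using Lemma~\ref{le:BVdec1}. The only variation is in part (2): the paper invokes \cite[Prop.\,3.6]{p-AHS08} for the fact that $\cK_i^N$ is \emph{generated} by the subspaces $(\ad_c N_i)^m(N_p)$, giving the exact identification $(\cK_i^N)_{\al_j+m\al_i}=(\ad_c N_i)^m(N_j)$, whereas you use only the containment $(\ad_c N_i)^m(N_j)\subset\cK_i^N$ from stability of coinvariants under $\ad_c N_i$ (also part of \cite[Prop.\,3.6]{p-AHS08}); both suffice for the conclusion.
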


\begin{proof}
  (1)
  Since $\rsyst (N)$ is defined,
  \[\NA (V(N))\simeq \ot _{l\in L,l<l(i)}\NA (W_l)\ot \NA (N_i)\ot \ot _{l\in
  L,l>l(i)}\NA (W_l)
  \]
  by Lemma~\ref{le:BVdec1}.
  Since the category $\ydH $ is braided,
  \begin{align}
    \NA (V(N))\simeq \ot _{l\in L,l\not=l(i)}\NA (W_l)\ot \NA (N_i).
    \label{eq:BVdec}
  \end{align}
  Thus Eqs.~\eqref{eq:BVdec}, \eqref{eq:KBV} and Lemma~\ref{le:KBiso}
  imply Claim (1).

  Now we prove (2).
  By \cite[Prop.\,3.6]{p-AHS08} the algebra $\cK_i^N$ is generated by the
  $\ndN _0^\theta $-homogeneous subspaces $(\ad _c N_i)^m(N_p)$ of degree $\al
  _p+m\al _i$, where $p\in \Ib \setminus \{i\}$ and $m\ge 0$, and hence
  \begin{equation}
    (\cK _i^N)_{\al _j+m\al _i}=(\ad _c N_i)^m(N_j).
    \label{eq:KiMdeg}
  \end{equation}
  On the other hand, since $\deg W_l\in (\sum _{j=1}^\theta \ndN _0\al _j)
  \setminus \ndN _0\al _i$ for all $l\in L\setminus \{l(i)\}$ by
  Lemma~\ref{le:BVdec1}, and all tensor factors are $\ndN _0^\theta
  $-graded, we obtain that
  \begin{equation}
    \Big(\ot _{l\in L,l\not=l(i)}\NA (W_l)\Big)_{\al _j+m\al _i}
    \simeq \oplus _{l\in L,\deg W_l=\al _j+m\al _i}W_l.
    \label{eq:otNAdeg}
  \end{equation}
  Then Part~(1) of the lemma and Eqs.~\eqref{eq:KiMdeg},
  \eqref{eq:otNAdeg} give Claim~(2).
\end{proof}

The following theorem is one of the main results of \cite{p-AHS08}.

\begin{theor}\cite[Thm.\,3.12, Lemma\,3.21, Cor.\,3.17]{p-AHS08}
  Let $i\in \Ib $ and $N=([N_1],\ldots ,[N_\theta ])\in \cM _\theta $.
  Assume that $N$ is $i$-finite, and let
  $\rfl _i(N)=([N'_1],\ldots ,[N'_\theta ])$.
  
  (1) The family $\rfl _i(N)$ is $i$-finite, $\rfl _i^2(N)=N$, and
  $\cm ^N_{i j}=\cm ^{\rfl _i(N)}_{i j}$ for all $j\in \Ib $. Moreover, $\Cm
  ^N=(\cm ^N_{ij})_{i,j\in \Ib }$ is a generalized Cartan matrix.
  
  (2)
  Let $\deg N_j=\al _j$ for all $j\in \Ib $, $\deg N'_j=s_i^N(\al _j)$
  for all $j\in \Ib \setminus \{i\}$, and
  $\deg N_i^*=-\al _i$. Then
  \begin{align*}
    \NA (V(\rfl _i(N)))\simeq \cK _i^N\ot \NA (N_i^*)
  \end{align*} 
  as $\ndZ ^\theta $-graded objects in $\ydH $.

  (3) The Nichols algebras $\NA (V(N))$ and $\NA (V(\rfl _i(N)))$ have the
  same dimension.
  \label{th:R2}
\end{theor}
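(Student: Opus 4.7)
The plan is to build $\rfl _i(N)$ from the coinvariant algebra $\cK _i^N$ and then verify assertions (1)--(3). First I would establish the structure of $\cK _i^N$: it is generated as an algebra by the subspaces $(\ad _c N_i)^m(N_j)$ for $j\in \Ib \setminus \{i\}$ and $m\ge 0$, each of $\ndZ ^\theta $-degree $\al _j+m\al _i$. This generation, together with the identity $(\cK _i^N)_{\al _j+m\al _i}=(\ad _c N_i)^m(N_j)$ used in the proof of Lemma~\ref{le:Kroots}, combined with the tensor decomposition $\NA (V(N))\simeq \cK _i^N\ot \NA (N_i)$, describes $\cK _i^N$ explicitly. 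The $i$-finiteness hypothesis guarantees that the supremum in Def.~\ref{de:cm} is attained, so $-\cm ^N_{ij}=:m_j$ is finite and $N'_j=(\ad _c N_i)^{m_j}(N_j)$ is the top module in the $\ad _c N_i$-orbit of $N_j$. Maximality of $m_j$ forces $\ad _c N_i$ to annihilate $N'_j$, and a direct computation of the coproduct of $(\ad _c N_i)^m(N_j)$ inside $\NA (V(N))$ shows that each $N'_j$ is primitive in $\cK _i^N$ with its induced braided Hopf algebra structure.

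The second step is to apply Thm.~\ref{th:RisNA} to $\cK _i^N$. The semisimplicity hypothesis underlying the existence of $\rsyst (N)$ is inherited by finite tensor powers of $U:=\bigoplus _{j\ne i}N'_j$, so the theorem yields a subobject $V\supset U$ with $\cK _i^N\simeq \NA (V)$ in $\ydH $. Using the explicit degree formula from the proof of Lemma~\ref{le:Kroots} together with the $\ndN _0^\theta $-grading, one checks that $V$ cannot be strictly larger than $U$ in any degree, pinning down $V=U$. This is the structural heart of (2): the coinvariants form the Nichols algebra of $\bigoplus _{j\ne i}N'_j$.

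For (2) itself, I would then form a biproduct-type construction that replaces $\NA (N_i)$ by $\NA (N_i^*)$ and regrades by $\deg N'_j=s_i^N(\al _j)$ and $\deg N_i^*=-\al _i$. The modules $N'_j$ for $j\ne i$ remain primitive, $N_i^*$ is primitive in $\NA (N_i^*)$, and the combined tensor product $\cK _i^N\ot \NA (N_i^*)$ becomes a connected braided Hopf algebra in $\ydH $ whose primitive part equals $V(\rfl _i(N))$. A further appeal to Thm.~\ref{th:RisNA} identifies it with $\NA (V(\rfl _i(N)))$, giving the $\ndZ ^\theta $-graded isomorphism in (2). Claim (3) then falls out immediately: $\dim \NA (N_i)=\dim \NA (N_i^*)$ (categorical duality preserves graded dimensions of Nichols algebras), and combining the two factorizations gives $\dim \NA (V(N))=\dim \cK _i^N\cdot \dim \NA (N_i)=\dim \NA (V(\rfl _i(N)))$.

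Finally, for (1): the identity $\cm ^N_{ii}=2$ is by convention, and $\cm ^N_{ij}\le 0$ for $j\ne i$ follows from $(\ad _c N_i)^0(N_j)=N_j\ne 0$, so $\Cm ^N$ is a generalized Cartan matrix. The symmetry relations $\rfl _i^2(N)=N$ and $\cm ^N_{ij}=\cm ^{\rfl _i(N)}_{ij}$ follow by applying the construction of (2) to $\rfl _i(N)$ and exploiting $(N_i^*)^*\simeq N_i$ to match $\cK _i^{\rfl _i(N)}$ with $\cK _i^N$. The main obstacle throughout is the biproduct step: verifying that $\cK _i^N\ot \NA (N_i^*)$, with its reindexed grading, genuinely assembles into a braided Hopf algebra in $\ydH $ whose Yetter-Drinfeld structure matches that of $\NA (V(\rfl _i(N)))$. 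This requires a careful dualization of the action and coaction of $N_i$ on $\cK _i^N$, and is where the technical bulk of the argument lies.
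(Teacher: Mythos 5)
This theorem is not proved in the paper you are working from: it is stated as a citation of \cite[Thm.~3.12, Lemma~3.21, Cor.~3.17]{p-AHS08}, and the entire proof lives in that external reference. So there is no in-paper argument to compare against; I can only assess your reconstruction on its own terms, and there it has two substantive gaps.

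First, your argument invokes Theorem~\ref{th:RisNA}, which requires that finite tensor powers of the relevant module be semisimple. But the theorem you are proving carries no such hypothesis: the only assumption is that $N$ is $i$-finite. The whole point of isolating this theorem as a citation is that in \cite{p-AHS08} the reflection construction, the equalities $\rfl_i^2(N)=N$ and $\cm^N_{ij}=\cm^{\rfl_i(N)}_{ij}$, and the graded isomorphism $\NA(V(\rfl_i(N)))\simeq\cK_i^N\ot\NA(N_i^*)$ are established directly from the Hopf-algebraic structure of the bosonization $\NA(V(N))\simeq \cK_i^N\#\NA(N_i)$, with no appeal to semisimplicity of the ambient category. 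By routing the proof through Theorem~\ref{th:RisNA} you are proving a strictly weaker statement under strictly stronger hypotheses, which is not what is asserted.

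Second, the intermediate claim that $\cK_i^N\simeq \NA\bigl(\bigoplus_{j\ne i}N'_j\bigr)$ in $\ydH$ is not correct in the generality you need. The algebra $\cK_i^N$ is indeed generated by the subspaces $(\ad_c N_i)^m(N_j)$, but it is not a Nichols algebra in $\ydH$; it becomes a Nichols algebra only after passing to the braided category of Yetter--Drinfeld modules over the bosonization $\NA(N_i)\#H$, which is where its coalgebra structure actually lives. Treating $\cK_i^N$ as a connected braided Hopf algebra in $\ydH$ and extracting its primitives there conflates two different coalgebra structures. Relatedly, the step you flag as ``the technical bulk'' --- assembling $\cK_i^N\ot\NA(N_i^*)$ into a braided Hopf algebra in $\ydH$, dualizing the $\NA(N_i)$-action on $\cK_i^N$ to an $\NA(N_i^*)$-action, and checking that the primitives recover $V(\rfl_i(N))$ --- is precisely the content of Thm.~3.12 and Lemma~3.21 of \cite{p-AHS08} and cannot be waved through. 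With that step unproved, parts (1) and (3) of the theorem (which you correctly reduce to (2) plus $\dim\NA(N_i)=\dim\NA(N_i^*)$) remain unproved as well.
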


%\begin{defin}
%  Let $\cG (\cM _\theta )$ denote the groupoid with $\Ob (\cG (\cM _\theta
%  ))=\cM _\theta $,
%  morphism sets
%  \[\Hom (N,P)=\{(P,f,N)\,|\,f\in \Aut (\ndZ ^\theta )\}\]
%  for all $N,P\in \cM _\theta $,
%  and composition induced by the group structure of
%  $\Aut (\ndZ ^\theta )$:
%  \[ (Q,g,P)\circ (P,f,N) = (Q,gf, N)\]
%  for all $N,P,Q\in \cM _\theta $, $f,g\in \Aut (\ndZ ^\theta )$.
%  Let $M\in \cM _\theta $. Assume that $\rfl _{i_1}\cdots \rfl _{i_n}(M)\in \cM
%  _\theta $ is defined for all
%  $n\in \ndN $ and $i_1,\ldots ,i_n\in \Ib $.
%  Let
%  \[\cM _\theta (M)=\{ \rfl _{i_1}\cdots \rfl _{i_n}(M)\in \cM _\theta \,|\,
%  n\in \ndN _0, i_1,\ldots ,i_n\in \Ib \}.
%  \]
%  Let $\Wg (M)$ be the subgroupoid of $\cG (\cM _\theta )$
%  The \textit{Weyl groupoid} $\Wg (M)$ \textit{of $M$} is the subgroupoid of
%  $\cG (\cM _\theta )$
%  with objects the elements of $\cM _\theta (M)$
%  and morphisms generated by all $(\rfl _i(N),s_i^N,N)$, where
%  $i\in \Ib $ and $N\in \cM _\theta (M)$.
%  It is called the \textit{Weyl groupoid of} $M$.
%\end{defin}

%Explicitly, the morphisms of $\Wg (M)$ between $N$ and $P$ are the triples
%$(P,s,N)$, such that
%\[
%s=s_{i_n}^{\rfl _{i_{n-1}}\cdots \rfl _{i_1}(N)}\cdots
%s_{i_2}^{\rfl _{i_1}(N)}s_{i_1}^N
%\]
%and $\rfl _{i_n}\cdots \rfl _{i_2}\rfl _{i_1}(N)=P$
%for some $n\in \ndN _0$ and $i_1,\ldots ,i_n\in \Ib $.
%Indeed, the inverse of $(\rfl _i(N),s_i^N,N)\in \Hom (N,\rfl _i(N))$ is
%$(N,s_i^{\rfl _i(N)},\rfl _i(N))$, since $s_i^N$ is a reflection, and
%$s_i^N=s_i^{\rfl _i(N)}$ and $\rfl _i^2(N)=N$ by Thm.~\ref{th:R2}.

\begin{lemma}
  Let $N\in \cM _\theta $ and $i\in \Ib $. Assume that $\rsyst (N)$ is defined
  and $N$ is $i$-finite. Then $\rsyst (\rfl _i(N))$ is defined, and the map
  \begin{align*}
    s_i^N: \rsyst (N)\to \rsyst (\rfl _i(N)),\qquad
    (C,\gamma ) \mapsto (C,s_i^N(\gamma )),
  \end{align*}
  is bijective and preserves multiplicities, that is,
  \[
    \mult _N(C,\gamma) = \mult _{\rfl _i(N)}(C, s_i^N(\gamma ))
  \]
  for all $(C,\gamma) \in \rsyst (N)$.
  \label{le:invrsyst}
\end{lemma}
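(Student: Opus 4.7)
The plan is to unfold the definition of $\rsyst(\rfl_i(N))$ directly from the decomposition of $\NA(V(N))$, using Lemma~\ref{le:Kroots}(1) and Theorem~\ref{th:R2}(2). Fix an $\ndN_0^\theta$-graded isomorphism $\NA(V(N))\simeq \ot_{l\in L}\NA(W_l)$ supplied by the assumption that $\rsyst(N)$ is defined, and let $l(i)\in L$ be the unique index with $W_{l(i)}\simeq N_i$ and $\deg W_{l(i)}=\al_i$ guaranteed by Lemma~\ref{le:BVdec1}. Combining Lemma~\ref{le:Kroots}(1) with Theorem~\ref{th:R2}(2) then gives
\begin{align*}
\NA(V(\rfl_i(N)))\simeq \Bigl(\ot_{l\in L\setminus\{l(i)\}}\NA(W_l)\Bigr)\ot \NA(N_i^*)
\end{align*}
as $\ndZ^\theta$-graded objects in $\ydH$, where on the right the tensor factor $\NA(W_l)$ keeps its original degree and $\NA(N_i^*)$ sits in degree $-\al_i$; equivalently, each new generator $N'_j=(\ad_c N_i)^{-a^N_{ij}}(N_j)$ has this ``old'' degree equal to $s_i^N(\al_j)$.

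The next step is to pass from this $\ndZ^\theta$-grading to the standard $\ndN_0^\theta$-grading on $\NA(V(\rfl_i(N)))$ in which $\deg N'_j=\al_j$. Since $s_i^N$ is an involution, the two gradings differ by $s_i^N$: any old-homogeneous element $x$ has new degree $s_i^N(\deg_{\mathrm{old}} x)$. I then have to check that each tensor factor is $\ndN_0^\theta$-graded in the new sense. For $N_i^*$ this is immediate because $s_i^N(-\al_i)=\al_i$. For $W_l$ with $l\neq l(i)$ I would use that $W_l\subset \cK_i^N$ and that by \cite[Prop.\,3.6]{p-AHS08} the algebra $\cK_i^N$ is generated by the elements $(\ad_c N_i)^m(N_j)$ of degree $m\al_i+\al_j$, with $j\neq i$ and $0\le m\le -a^N_{ij}$ (see Def.~\ref{de:cm}). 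Expressing $\deg W_l=\sum_j n_j\al_j$ as an $\ndN_0$-combination of these generator degrees forces $n_i\le \sum_{j\neq i}n_j|a^N_{ij}|$, which is exactly the condition that makes the $\al_i$-coefficient of
\[ s_i^N(\deg W_l)=\sum_{j\neq i}n_j\al_j+\Bigl(\sum_{j\neq i}n_j|a^N_{ij}|-n_i\Bigr)\al_i \]
nonnegative, while the other coefficients $n_j$ are nonnegative by assumption. So the family $(W'_{l'})_{l'\in L'}$ with $L'=(L\setminus\{l(i)\})\sqcup\{*\}$, $W'_l=W_l$ for $l\neq *$ and $W'_*=N_i^*$, witnesses that $\rsyst(\rfl_i(N))$ is defined.

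From this description the bijection and the multiplicity statement are automatic. By construction, $\rsyst_+(\rfl_i(N))$ consists of the pairs $(W_l,s_i^N(\deg W_l))$ for $l\in L\setminus\{l(i)\}$ together with $(N_i^*,\al_i)$; these are precisely the $s_i^N$-images of the positive roots $(W_l,\deg W_l)$ of $N$ with $l\neq l(i)$ and of the negative root $(N_i^*,-\al_i)\in\rsyst_-(N)$ corresponding to $l(i)$. Because $s_i^N$ is involutive and the $\rsyst_\pm$ are related by Eq.~\eqref{eq:nroots}, the map $(C,\gamma)\mapsto(C,s_i^N(\gamma))$ therefore bijects $\rsyst(N)$ with $\rsyst(\rfl_i(N))$ and matches the tensor factors one-to-one, so Eq.~\eqref{eq:mult} immediately gives equality of multiplicities. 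I expect the main technical hurdle to be the $\ndN_0^\theta$-positivity check for the reflected grading; once that is settled, Lemma~\ref{le:BVdec1} forbids any second tensor factor of degree in $\ndN\al_i$, so no collision with the new factor $N_i^*$ arises and the rest is bookkeeping.
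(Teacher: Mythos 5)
Your proposal is correct and follows essentially the same route as the paper: start from a decomposition $\NA(V(N))\simeq\ot_{l}\NA(W_l)$, isolate the factor $\NA(N_i)$ via Lemma~\ref{le:Kroots}(1), use Thm.~\ref{th:R2}(2) to transport this to $\NA(V(\rfl_i(N)))$ with the shifted grading, then regrade by $s_i^N$ and read off the bijection and multiplicities. The one place where you do extra work is the positivity check: you verify directly, using \cite[Prop.\,3.6]{p-AHS08} and the generator degrees $\al_j+m\al_i$ with $0\le m\le -\cm^N_{ij}$, that $s_i^N(\deg W_l)\in\ndN_0^\theta$ for $l\neq l(i)$. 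That computation is valid, but it is also automatic once the isomorphism with $\NA(V(\rfl_i(N)))$ in its standard grading is in hand: the standard grading is $\ndN_0^\theta$ by construction, so every $\ndZ^\theta$-homogeneous subobject of it (in particular each $W'_l$) has degree in $\ndN_0^\theta$. Either way the argument is complete.
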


\begin{proof}
  Let $N=([N_1],\ldots ,[N_\theta ])\in \cM _\theta $ and $V(N)=N_1\oplus
  \cdots \oplus N_\theta $. Since $\rsyst (N)$ is defined, there is an index
  set $(L,\le )$ and a family $(W_l)_{l\in L}$ as in Def.~\ref{de:rsysN} such
  that $\NA (V(N))\simeq \ot _{l\in L}\NA (W_l)$.

  Let $\rfl _i(N)=([N'_1],\ldots ,[N'_\theta ])\in \cM _\theta $ as in
  Def.~\ref{de:cm}, and $V(\rfl _i(N))=N'_1\oplus \cdots \oplus N'_\theta $.
  By Lemma~\ref{le:Kroots} and Thm.~\ref{th:R2}(2) we obtain the decomposition
  \begin{align}
    \NA (V(\rfl _i(N)))\simeq \otimes _{l\in L,l\not=l(i)}\NA (W_l)\ot \NA
    (N_i^*)
    \label{eq:NAV'1}
  \end{align}
  as $\ndZ ^\theta $-graded objects in $\ydH $.
  We now use the group automorphism $s_i^N : \ndZ ^\theta \to \ndZ ^\theta $
  to change the gradation.
  For all $l \in L$, $l \neq l(i)$, let $W'_l = W_l$ as object in
  $\ydH $ and let  $\deg(W'_l) = s_i^N(\deg(W_l))$, and $W'_{l(i)} = N_i^*$ with
  $\deg(N_i^*) = \al _i$.
%  Let $\rfl _i(N) =([N'_1],\dots,[N'_{\theta}])$, and
%  $V(\rfl _i(N)) = N'_1 \oplus \cdots \oplus N'_{\theta}$.
  The isomorphism in \eqref{eq:NAV'1} gives an isomorphism
  \begin{equation}
    \NA(V(\rfl _i(N))) \simeq \ot_{l \in L} \NA(W'_l)
  \end{equation}
  of $\ndZ ^\theta $-graded objects in $\ydH $,
  where $\deg{N'_j} = \al _j$ for all $j \in \Ib $. Thus 
  \begin{align*}
    \rsyst _+(\rfl _i(N)) &= \{([W_l], s_i^N(\deg(W_l))\,|\,l \in L,
    l \neq l(i)\}  \cup \{([N_i^*], \al _i)\}\\
    &=s_i^N\Big(\big(\rsyst _+(N) \setminus \{([N_i],\al _i)\}\big)\cup
    \{([N_i^*],-\alpha_i)\}\Big).
  \end{align*}
  Since $\rsyst  =\rsyst _+ \cup \rsyst _-$, it follows that $\rsyst (\rfl
  _i(N))= s_i^N(\rsyst (N))$. Moreover, for all $\gamma \in \ndN _0^\theta $,
  $\gamma \neq \al _i$, and $C \in \icyH $,
  \begin{align*}
    \mult _N(C,\gamma) &= |\{ l \in L \,|\, C=[W_l], \gamma = \deg(W_l)\}|\\
    &= |\{ l \in L \,|\, C=[W'_l], s_i^N(\gamma) = \deg(W'_l)\}|\\
    &=\mult _{\rfl _i(N)}(C,s_i^N(\gamma)), 
  \end{align*}
  and $\mult _N(C,\al _i) = 1 = \mult _{\rfl _i(N)}(C^*, \al _i)
  =\mult _{\rfl _i(N)}(C, s_i^N(\al _i))$  with $C = [N_i]$.
  This proves the lemma.
\end{proof}

\begin{defin}\label{de:CS}
 For all $M\in \cM _\theta $ let
 %Assume that $\rfl _{i_1}\cdots \rfl _{i_n}(M)$ is defined for all
 %$n\in \ndN $ and $i_1,\ldots ,i_n\in \Ib $.
 \begin{align*}
 \cM _\theta (M)=&\{ \rfl _{i_1}\cdots \rfl _{i_n}(M)\in \cM _\theta \,|\,
 n\in \ndN _0, i_1,\ldots ,i_n\in \Ib \}.
 \end{align*}
% We say that $M$ \textit{defines a Cartan scheme}
% if $N$ is $i$-finite for all $N\in \cM _\theta (M)$ and $i$ in $\Ib $.
 If $N$ is $\Ib $-finite for all $N\in \cM _\theta (M)$, then let
 \begin{align*}
   \cC (M)=&(\Ib ,\cM _\theta (M),(\rfl _i)_{i\in \Ib },
   (\Cm ^N)_{N\in \cM _\theta (M)}).
 \end{align*}
\end{defin}

\begin{theor} \label{th:Cscheme}
 Let $M\in \cM _\theta $. Assume that $N$ is $\Ib $-finite for all $N\in \cM
 _\theta (M)$.
 Then $\cC (M)$ is a Cartan scheme.
\end{theor}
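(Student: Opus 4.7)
The plan is to verify the two Cartan scheme axioms (C1) and (C2) pointwise on $\cM_\theta(M)$, together with the requirement that each $\Cm^N$ is a generalized Cartan matrix, i.e.\ satisfies (M1) and (M2). Essentially all the analytic work has already been done in Thm.~\ref{th:R2}; the present theorem is a bookkeeping statement that packages those properties into the formal structure of a Cartan scheme.

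First I would check that the quadruple $\cC(M)$ is well-defined. By construction $\cM_\theta(M)$ is closed under each $\rfl_i$, so the family $(\rfl_i)_{i\in\Ib}$ defines maps $\cM_\theta(M)\to\cM_\theta(M)$. Since every $N\in\cM_\theta(M)$ is $\Ib$-finite by hypothesis, Def.~\ref{de:cm} produces the Cartan matrix $\Cm^N$ honestly, rather than degenerating to the trivial case $\rfl_i(N)=N$; in particular each entry $\cm^N_{ij}$ is a well-defined nonpositive integer for $i\ne j$, and $\cm^N_{ii}=2$.

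Next, for each $N\in\cM_\theta(M)$ and each $i\in\Ib$, I apply Thm.~\ref{th:R2}(1) to $N$. This yields simultaneously: (a) $\rfl_i(N)$ is $i$-finite, so $\rfl_i^2(N)$ is defined via Def.~\ref{de:cm} and equals $N$, giving (C1); (b) $\cm^N_{ij}=\cm^{\rfl_i(N)}_{ij}$ for all $j\in\Ib$, which is exactly (C2); and (c) $\Cm^N=(\cm^N_{ij})_{i,j\in\Ib}$ is a generalized Cartan matrix, which supplies (M1) and (M2). Since $N\in\cM_\theta(M)$ was arbitrary, all three conclusions hold uniformly over $\cM_\theta(M)$.

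I do not anticipate any serious obstacle: the main point is simply that the $\Ib$-finiteness hypothesis is needed at \emph{every} object of $\cM_\theta(M)$ (not merely at $M$) so that Thm.~\ref{th:R2}(1) can be invoked at each stage. This is precisely what the hypothesis of the theorem gives, so the verification reduces to citing Thm.~\ref{th:R2}(1) at every $N\in\cM_\theta(M)$ and every $i\in\Ib$.
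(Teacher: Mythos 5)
Your proof is correct and takes essentially the same approach as the paper, which simply cites Thm.~\ref{th:R2}(1); you have unpacked how parts (1) of that theorem supply (C1), (C2), and the generalized Cartan matrix property, which is exactly the intended argument.
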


\begin{proof}
  This follows from Thm.~\ref{th:R2}(1).
\end{proof}

In the situation of Thm.~\ref{th:Cscheme}, $\cC (M)$ is called
the \textit{Cartan scheme of} $M$, and $\Wg (M)=\Wg (\cC (M))$ is called the
\textit{Weyl groupoid of} $M$. It is clear by construction that $\Wg (M)$ is
connected.

\begin{theor}\label{th:rsysC}
  Let $M\in \cM _\theta $. Assume that $N$ is $\Ib $-finite for all $N\in \cM
  _\theta (M)$,
  and that $\rsyst (M)$ is defined. Then
  \[ \Rwg (M)=(\cC (M), (\rsys (N))_{N\in \cM _\theta (M)}) \]
  is a root system of type $\cC (M)$, called the {\em root system
  of} $M$.
%  that is,
%  for all $i,j\in \Ib $ and $N\in \cM _\theta (M)$,
%  \begin{enumerate}
%    \item $\rsys (N)=(\rsys (N)\cap \ndN _0^{\Ib })\cup
%      -(\rsys (N)\cap \ndN _0^{\Ib })$,
%    \item $\rsys (N)\cap \ndZ \al _i=\{\al _i,-\al _i\}$,
%    % for all $i\in \Ib $ and $N\in \Ob (\Wg (M))$.
%    \item $s_i^N(\rsys (N))=\rsys (\rfl _i(N))$, and
%    \item %if $a\in \Ob (\rc )$ and $i,j\in I$ then
%          $(\rfl _i\rfl _j)^{m_{i,j;N}}(N)=N$
%          if $m_{i,j;N}:=|\rsys (N)\cap (\ndN _0\al _i+\ndN _0\al _j)|$ is
%          finite
%          and $i\not=j$.
%  \end{enumerate}
\end{theor}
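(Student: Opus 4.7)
The plan is to verify the four root system axioms (R1)--(R4) for $\Rwg (M)$ in turn, using Defs.~\ref{de:rsysN} and \ref{de:cm} together with the results already established.

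Axioms (R1)--(R3) are essentially built into the construction. Axiom (R1) is immediate from Def.~\ref{de:rsysN}, since $\rsys _\pm (N)=\pm \{\deg W_l\mid l\in L\}\subset \pm \ndN _0^\theta $. Axiom (R2) follows from Lemma~\ref{le:BVdec1}: for each $i\in \Ib $ there is a unique $l(i)\in L$ with $\deg W_{l(i)}\in \ndN \al _i$, and this degree equals $\al _i$, so $\rsys (N)\cap \ndZ \al _i=\{\al _i,-\al _i\}$. Axiom (R3) is the content of Lemma~\ref{le:invrsyst}, which exhibits $s_i^N$ as a multiplicity-preserving bijection $\rsyst (N)\to \rsyst (\rfl _i(N))$ and hence, after forgetting the isomorphism-class labels, a bijection $\rsys (N)\to \rsys (\rfl _i(N))$.

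The substantive content is axiom (R4). Fix $i\ne j$ in $\Ib $, $N\in \cM _\theta (M)$, and assume $m:=m_{i,j;N}<\infty $. Because each generator $s_l^P$ acts trivially on $\al _p$ for $p\ne l$, every composite of alternating $\rfl _i,\rfl _j$ reflections fixes $\al _p$ for $p\notin \{i,j\}$, so the question reduces to a rank-two problem on the sublattice $\ndZ \al _i\oplus \ndZ \al _j$. Form the sequence $P_0=N$, $P_1=\rfl _j(P_0)$, $P_2=\rfl _i(P_1),\dots $ and let $w_k=s_{l_k}^{P_{k-1}}\cdots s_{l_1}^{P_0}$, where $l_1=j,l_2=i,l_3=j,\dots $. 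By Lemma~\ref{le:invrsyst}, $w_k$ sends $\rsys (P_0)$ bijectively onto $\rsys (P_k)$, and since $w_k$ preserves the sublattice $\ndZ \al _i\oplus \ndZ \al _j$, the number of positive roots of $P_k$ in the cone $\ndN _0\al _i+\ndN _0\al _j$ equals $m$ for every $k$.

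The main obstacle is now to deduce $(\rfl _i\rfl _j)^m(N)=N$. The plan is the classical Bourbaki-style rank-two argument: enumerate the positive roots $\beta _1=\al _i,\beta _2,\dots ,\beta _m=\al _j$ of $\rsys (P_0)$ in order of slope and track how many of $w_k(\beta _1),\dots ,w_k(\beta _m)$ are negative. At each step exactly one extremal root is flipped; this uses that $\Cm ^{P_k}$ is a generalized Cartan matrix by Thm.~\ref{th:R2}(1), together with Lemmata~\ref{le:BVdec1} and~\ref{le:invrsyst}, to conclude that the reflection $s_{l_{k+1}}^{P_k}$ permutes $\rsys _+(P_k)\setminus \{\al _{l_{k+1}}\}$ and sends only $\al _{l_{k+1}}$ to a negative root. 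After $2m$ steps every $\beta _r$ has been negated and restored, so $w_{2m}$ is the identity on $\ndZ \al _i\oplus \ndZ \al _j$ and, by the remark above, on all of $\ndZ ^\theta $. Hence $\rsys (P_{2m})=\rsys (P_0)$, and Lemma~\ref{le:NrsysN} forces $P_{2m}=P_0$, which is axiom (R4). The combinatorial verification that ``one extremal root flips at a time'' is the principal technical obstacle; it is where the rank-two nature of $\Cm ^{P_k}$ is used essentially.
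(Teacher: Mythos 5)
Your treatment of (R1)--(R3) is correct and matches the paper: (R1) is immediate from the definition, (R2) is Lemma~\ref{le:BVdec1}, and (R3) is Lemma~\ref{le:invrsyst}.

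For (R4), however, there is a genuine gap. You write that ``each generator $s_l^P$ acts trivially on $\al _p$ for $p\ne l$, so every composite of alternating $\rfl _i,\rfl _j$ reflections fixes $\al _p$ for $p\notin \{i,j\}$.'' This is false: by Def.~\ref{de:cm}, $s_l^P(\al _p)=\al _p-\cm ^P_{lp}\al _l$ for $p\ne l$, and $\cm ^P_{lp}$ need not vanish. In particular $s_i^N(\al _k)$ and $s_j^N(\al _k)$ do move $\al _k$ for $k\notin \{i,j\}$, so the composite $w_{2m}$ built from alternating $s_i,s_j$ acts on $\al _k$ by adding an a priori unknown element of $\ndZ \al _i+\ndZ \al _j$. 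Showing this element is zero is not automatic; the paper proves it by a separate argument: by (R3) one knows $w_{2m}(\al _k)=\al _k+n_i\al _i+n_j\al _j$ is a root of the target object, so $n_i,n_j\ge 0$ by (R1); applying $w_{2m}^{-1}$ to $\al _k$ shows $\al _k-n_i\al _i-n_j\al _j$ is a root of $N$, forcing $n_i=n_j=0$. Without this step your ``reduction to the rank-two sublattice'' does not reduce the action of $w_{2m}$ on all of $\ndZ ^\theta $, only its restriction to $\ndZ \al _i\oplus \ndZ \al _j$, and so the conclusion $w_{2m}=\id $ does not yet follow from $w_{2m}(\al _i)=\al _i$, $w_{2m}(\al _j)=\al _j$.

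Two smaller remarks. First, the Bourbaki-style ``one extremal root flips at a time'' argument you sketch for the rank-two part is essentially what \cite[Lemma\,5]{a-HeckYam08} does, and the paper simply cites it rather than reproving it; your sketch is plausible but, as stated, speaks of $s_{l_{k+1}}^{P_k}$ ``permuting $\rsys _+(P_k)\setminus \{\al _{l_{k+1}}\}$'', which is not quite meaningful in a groupoid (the map goes from $\rsys (P_k)$ to $\rsys (P_{k+1})$, not back to $\rsys (P_k)$). Second, at the end you invoke Lemma~\ref{le:NrsysN} from $\rsys (P_{2m})=\rsys (P_0)$, but that lemma requires equality of the labelled root systems $\rsyst $, not just the underlying $\rsys $; this is why the paper first passes to $\rsyst (N)=w_{m_{i,j;N}}(\rsyst (N))=\rsyst ((\rfl _i\rfl _j)^{m_{i,j;N}}(N))$ via Lemma~\ref{le:invrsyst} before applying Lemma~\ref{le:NrsysN}.
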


If for $M\in \cM _\theta $
the assumptions of Thm.~\ref{th:rsysC} are satisfied,
%Let $M\in \cM _\theta $. Assume that $\rfl _{i_1}\cdots \rfl _{i_n}(M)$
%is defined for all $n\in \ndN $ and $i_1,\ldots ,i_n\in \Ib $,
%and that $\rsyst (M)$ is defined.
then we say that $\Rwg (M)$ \textit{is defined}. In this case $\rsyst (N)$ is
defined for all $N\in \cM _\theta (M)$ by Lemma~\ref{le:invrsyst}.

\begin{proof}
  We have to prove (R1)--(R4) from Sect.~\ref{sec:Wgrs}.
  
  (R1) follows from Eq.~\eqref{eq:rsys}, and (R2) holds by
  Lemma~\ref{le:BVdec1}. (R3) is a direct consequence of
  Lemma~\ref{le:invrsyst}.

  (R4) Let $w_0=\id \in \Aut (\ndZ ^\theta )$, and for all
  $n\in \ndN _0$ let
  \[ w_{n+1}=s_i^{\rfl _j(\rfl _i \rfl _j)^n(N)} s_j^{(\rfl _i \rfl _j)^n(N)}w_n
  \in \Aut (\ndZ ^\theta ).\]
  By \cite[Lemma\,5]{a-HeckYam08} we obtain that
  \[w_{m_{i,j;N}}(\al _i)=\al _i,\quad w_{m_{i,j;N}}(\al _j)=\al _j.
  \]
  Note that the proof of \cite[Lemma\,5]{a-HeckYam08} applies, since in our
  case $\pi _a$ in \cite{a-HeckYam08} is always the standard basis.

  Let $k\in \Ib \setminus \{i,j\}$.
  By (R3) we have
  \[w_{m_{i,j;N}}(\al _k)\in \al _k+\ndZ \al
  _i+\ndZ \al _j\cap \rsys ( (\rfl _i \rfl _j)^{m_{i,j;N}}(N)),\]
  hence
  $w_{m_{i,j;N}}(\al _k)=\al _k+n_i\al _i+n_j\al _j$ for some $n_i,n_j\in \ndN
  _0$. Since $\al _k\in \rsys ( (\rfl _i \rfl _j)^{m_{i,j;N}}(N))$,
  it follows from (R3) that
  \begin{align*}
    w_{m_{i,j;N}}^{-1}(\al _k)=&w_{m_{i,j;N}}^{-1}(w_{m_{i,j;N}}(\al _k)
    -n_i\al _i-n_j\al _j)\\
    =&\al _k-n_i\al _i-n_j\al _j\in \rsys (N).
  \end{align*}
  Hence $n_i=n_j=0$ by (R1), that is, $w_{m_{i,j;N}}=\id $.
  Then
  \[ \rsyst (N)=w_{m_{i,j;N}}(\rsyst (N))=
  \rsyst ( (\rfl _i \rfl _j)^{m_{i,j;N}}(N)) \]
  by Lemma~\ref{le:invrsyst}. Thus
  $(\rfl _i \rfl _j)^{m_{i,j;N}}(N)=N$ by Lemma~\ref{le:NrsysN}.
\end{proof}

\begin{corol}
  Let $M\in \cM _\theta $ such that $\rsyst (M)$ is defined and is finite.
  Then $\Rwg (M)$ is defined, and $\Rwg (M)$ is a root system of type $\cC
  (M)$.
\end{corol}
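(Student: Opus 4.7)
The plan is to verify the two hypotheses of Thm.~\ref{th:rsysC}: namely, that $N$ is $\Ib$-finite for all $N\in \cM _\theta (M)$, and that $\rsyst (M)$ is defined. The second hypothesis is given. For the first one I would proceed by induction on the length of a word $\rfl _{i_1}\cdots \rfl _{i_n}$ generating $N$ from $M$, and along the way strengthen the induction hypothesis to: for every such $N$, $\rsyst (N)$ is defined and $\rsys (N)$ is finite.

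The base case is $N=M$. Here $\rsyst (M)$ is defined by assumption. To see that $M$ is $\Ib $-finite, let $i\in \Ib $ and $j\in \Ib \setminus \{i\}$. By Lemma~\ref{le:Kroots}(2), whenever $(\ad _cN_i)^n(N_j)\ne 0$ one has $\al _j+m\al _i\in \rsys (M)$ for all $0\le m\le n$. Since $\rsys (M)$ is finite, the set of such $n$ is bounded, so there exists $h\in \ndN $ with $(\ad _cN_i)^h(N_j)=0$. Hence $M$ is $i$-finite for every $i\in \Ib $, i.e.\ $\Ib $-finite.

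For the induction step, suppose $N\in \cM _\theta (M)$ satisfies the strengthened hypothesis, and consider $\rfl _i(N)$ for some $i\in \Ib $. Since $N$ is $\Ib $-finite, it is in particular $i$-finite, so Lemma~\ref{le:invrsyst} applies and gives a bijection $s_i^N:\rsyst (N)\to \rsyst (\rfl _i(N))$; in particular $\rsyst (\rfl _i(N))$ is defined, and $\rsys (\rfl _i(N))=s_i^N(\rsys (N))$ is finite because $\rsys (N)$ is. The same argument as in the base case, applied to $\rfl _i(N)$ in place of $M$, then shows that $\rfl _i(N)$ is $\Ib $-finite. This completes the induction.

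With both hypotheses of Thm.~\ref{th:rsysC} now verified for $M$, the theorem applies and yields that $\Rwg (M)$ is defined and is a root system of type $\cC (M)$. I do not anticipate a real obstacle: the only nontrivial observation is the implication \emph{finiteness of $\rsys (N)$ implies $\Ib $-finiteness of $N$}, which is an immediate consequence of Lemma~\ref{le:Kroots}(2), and the induction itself is formal given Lemma~\ref{le:invrsyst}.
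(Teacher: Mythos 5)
Your proof is correct and follows essentially the same route as the paper: induct on the length of the word $\rfl_{i_n}\cdots\rfl_{i_1}$, use Lemma~\ref{le:invrsyst} to propagate the fact that $\rsyst(N)$ is defined and finite along the orbit, and use Lemma~\ref{le:Kroots}(2) to convert finiteness of $\rsys(N)$ into $\Ib$-finiteness of $N$. Your write-up is in fact a bit more explicit than the paper's, which dismisses the base case $n=0$ as trivial even though it requires the same Lemma~\ref{le:Kroots}(2) argument that you spell out.
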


\begin{proof}
  By Thm.~\ref{th:rsysC}
  it suffices to prove that $\rfl _{i_n}\cdots \rfl _{i_1}(M)$
  is $\Ib $-finite for all $n\in \ndN _0$ and $i_1,\ldots ,i_n\in \Ib $.
  We proceed by induction on $n$. The case $n=0$ is trivial.
  Let $n\in \ndN _0$, $i_1,\ldots ,i_n\in \Ib $, and
  $N=\rfl _{i_n}\cdots \rfl _{i_1}(M)$. Since $\rsyst (M)$ is
  defined and is finite, Lemma~\ref{le:invrsyst} implies that $\rsyst (N)$ is
  defined and is finite. Then from Lemma~\ref{le:Kroots}(2) we obtain that
  for all $i,j\in \Ib $ with $i\not=j$ there
  exists $n_{ij}\in \ndN $ such that $(\ad _c N_i)^{n_{ij}}(N_j)=0$
  in $\NA (V(N))$,
  where $N=([N_1],\ldots ,[N_\theta ])$. Hence $N$ is $\Ib $-finite.
\end{proof}

\section{Finite root systems for Nichols algebras}
\label{sec:frs}

%Let $\theta \in \ndN $ and $\Ib =\{1,\ldots ,\theta \}$.
%Recall that $\cM _\theta =\big(\icyH \big)^\theta $. If $N=([N_1],\ldots ,[N
%_\theta ])\in \cM _\theta $, then the $[N_i]$, where $i\in \Ib $, are called
%components of $N$.
%\begin{defin}
%  Let $M=([M_1],\dots,[M_{\theta}])\in \cM _\theta $ such that $\Rwg (M)$ is
%  defined. Let
%  \begin{align*}
%    \rsyst {}\re (M) =&\{([N_i],w(\al _i))\,|\,
%  N=([N_1],\ldots ,[N_\theta ])\in \cM _\theta (M),\\
%  &w\in \Hom (N,M), i\in \Ib
%  \}.
%  \end{align*}
%\end{defin}
%
%Note that $\rsyst {}\re (M)$ consists of the pairs
%$$([N_i],s_{i_n}^{\rfl _{i_n}(M)}\cdots s_{i_1}^{\rfl _{i_1}\cdots
%\rfl _{i_n}(M)}(\al _i)),$$
%where $n\ge 0$, $i,i_1,\ldots ,i_n\in \Ib $,
%and $N=\rfl _{i_1}\cdots \rfl _{i_n}(M)$.

\begin{lemma}\label{le:realroots}
 Let $M =([M_1],\dots,[M_{\theta}])\in \cM _\theta $ such that $\Rwg (M)$ is
 defined. Let $(L,\le )$ be a totally ordered index set and
 $(W_l)_{l\in L}$ a family of \fd{} irreducible $\ndZ ^\theta $-graded
 objects in $\ydH $, such that
 \begin{equation}
  \label{Z}
  \NA (V(M))\simeq \ot _{l\in L}\NA (W_l)
 \end{equation}
 as $\ndZ ^\theta $-graded objects in $\ydH $, where $\deg(M_i) = \al _i$
 for all $i \in \Ib $.
 
 (1) For all $\gamma \in \rsys \re _+(M)$ there is
 exactly one $l(\gamma )\in L$ such that $\deg W_{l(\gamma )}=\gamma $.

 (2)
 If $N=([N_1],\ldots ,[N_\theta ])\in \cM _\theta (M)$,
 $w\in \Hom (N,M)$ and $i\in \Ib $
 such that $w(\al _i)\in \rsys \re _+(M)$, then $W_{l(w(\al _i))}\simeq N_i$.
\end{lemma}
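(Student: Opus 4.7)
The plan is to reduce both assertions to a multiplicity computation, exploiting the fact that the multiplicity function $\mult_N(C,\gamma)$ from Def.~\ref{de:rsysN} is well-defined on $\icyH\times\ndZ^\theta$ (by Lemma~\ref{le:undec}) and is preserved by the reflections $s_i^N$ (by Lemma~\ref{le:invrsyst}). Indeed, claim~(1) is equivalent to the statement that $\sum_{C\in\icyH}\mult_M(C,\gamma)=1$ for every $\gamma\in\rsys\re_+(M)$, and then claim~(2) amounts to identifying the unique class $C$ for which this multiplicity is nonzero with $[N_i]$.

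The key step is the following. Given $w\in\Hom(N,M)$ of the form $w=s_{i_n}^{N_{n-1}}\cdots s_{i_1}^{N_0}$ with $N_0=N$, $N_k=\rfl_{i_k}(N_{k-1})$ for $1\le k\le n$, and $N_n=M$, iterating Lemma~\ref{le:invrsyst} along this chain yields the equality
\begin{align*}
  \mult_M(C,w(\al_i))=\mult_N(C,\al_i)\qquad \text{for all $C\in\icyH$.}
\end{align*}
Next, apply Lemma~\ref{le:BVdec1} directly to $N$: any decomposition $\NA(V(N))\simeq \ot_{l\in L_N}\NA(W^N_l)$ of the prescribed form contains a unique factor of degree $\al_i$, and that factor is isomorphic to $N_i$. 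Rewritten in terms of multiplicities, this says $\mult_N([N_i],\al_i)=1$ and $\mult_N(C,\al_i)=0$ for $C\not\simeq[N_i]$. Combining the two displays gives $\mult_M([N_i],w(\al_i))=1$ and $\mult_M(C,w(\al_i))=0$ for $C\not=[N_i]$. Summing over $C$ yields $|\{l\in L\,|\,\deg W_l=w(\al_i)\}|=1$, which proves existence and uniqueness of $l(\gamma)$ for $\gamma=w(\al_i)$, and moreover forces $W_{l(\gamma)}\simeq N_i$, proving (2). Since every $\gamma\in\rsys\re_+(M)$ is of the form $w(\al_i)$ for some admissible triple $(N,w,i)$ by the definition of real roots, this also establishes (1) in full generality.

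The only subtlety I foresee is purely formal: one must verify that the composition of the bijections $s_{i_k}^{N_{k-1}}$ from Lemma~\ref{le:invrsyst} really corresponds, on the level of degrees, to the linear map $w\in\Aut(\ndZ^\theta)$ attached to the morphism $w:N\to M$ in $\Wg(M)$. This is immediate from the definition of composition in the Weyl groupoid (the morphisms are triples $(P,f,N)$ composed via multiplication in $\Aut(\ndZ^\theta)$), so no genuine obstacle arises; the rest of the argument is a direct chaining of the cited lemmas.
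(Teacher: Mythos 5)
Your proposal is correct and follows essentially the same route as the paper's proof: both arguments combine Lemma~\ref{le:invrsyst} (to transport the pair $([N_i],\al_i)\in\rsyst(N)$ along $w$ while preserving multiplicities) with Lemma~\ref{le:BVdec1} (to pin down the unique factor of degree $\al_i$ in a decomposition at $N$ as $N_i$ with multiplicity one). Your rephrasing via the identity $\mult_M(C,w(\al_i))=\mult_N(C,\al_i)$ is just a slightly more systematic packaging of the paper's two-step "existence then uniqueness" presentation, and the composition-of-reflections subtlety you flag is indeed harmless for the reason you give.
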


\begin{proof}
  The decomposition \eqref{Z} exists by assumption since $\Rwg (M)$ is defined.

  Let $\gamma \in \rsys _+\re (M)$. Then there exist
  $N=([N_1],\ldots ,[N_\theta ])\in \cM _\theta (M)$,
  $w\in \Hom (N,M)$ and $i\in \Ib $ such that $w(\al _i)=\gamma $.
  Since $([N_i],\al _i)\in \rsyst (N)$, Lemma~\ref{le:invrsyst} implies that
  $([N_i],\gamma )\in \rsyst (M)$, that is, there exists $l\in L$ such that
  $\deg W_l=\gamma $ and $W_l\simeq N_i$.

  Let $l,l' \in L$ with $\gamma=\deg(W_l) = \deg(W_{l'})\in \rsys _+\re $.
  Let $N=([N_1],\ldots ,[N_\theta ]) \in \cM _\theta (M)$,
  $w\in \Hom (N,M)\subset \Hom (\Wg (M))$, and $i\in \Ib $ such that
  $w(\al _i)=\gamma $.
  Then $w^{-1}([W_l],\gamma )=([W_l],\al _i)\in \rsyst (N)$
  by Lemma~\ref{le:invrsyst}, and similarly,
  $([W_{l'}],\al_i )\in \rsyst (N)$.
  By Lemma~\ref{le:BVdec1} we obtain that
  $[W_l]=[N_i]=[W_{l'}]$.
  The second part of Lemma~\ref{le:invrsyst} implies that
  $\mult _M([W_l],\gamma) = \mult _N([N_i],\al _i) =1$. Hence $l=l'$.
  This proves (1) and (2).
\end{proof}

\begin{theor}\label{th:maingeneral}
 Let $M =([M_1],\dots,[M_{\theta}])\in \cM _\theta $ such that $\Rwg (M)$ is
 defined. Assume that $\Wg(M)$ is finite.
 Let $(L,\le )$ be a totally ordered index set and
 $(W_l)_{l\in L}$ a family of \fd{} irreducible $\ndZ ^\theta $-graded
 objects in $\ydH $, such that
 \begin{equation}
  \label{Z1}
  \NA (V(M))\simeq \ot _{l\in L}\NA (W_l)
 \end{equation}
 as $\ndZ ^\theta $-graded objects in $\ydH $, where $\deg(M_i) = \al _i$
 for all $i \in \Ib $.
 
 (1)
 The map $L \to \rsys _+ (M)$, $l \mapsto \deg(W_l)$, is bijective.
 Moreover,
 \begin{equation}
 \begin{aligned}
   \big\{[W_l],[W_l^*]\,\big|\,l\in L\big\}
   =\big\{[N_i]\,\big|\,&i\in \Ib ,\\
   &N=([N_1],\ldots ,[N_\theta ])\in \cM
   _\theta (M)\big\}.
 \end{aligned}
   \label{eq:W=N}
 \end{equation}
 
 (2) If $\gamma \in \rsys _+(M)$, then there exist
 $N=([N_1],\ldots ,[N_\theta ])\in \cM _\theta (M)$,
 $w\in \Hom (N,M)$ and $i\in \Ib $
 such that $\gamma =w(\al _i)$. In this case
 \[ W_{l(\gamma )}\simeq N_i, \]
 where $l(\gamma )$ is the unique element in $L$ with $\deg W_{l(\gamma
 )}=\gamma $.

 (3) Let $i,j \in \Ib $, $i \neq j$, and $0 \leq m \leq -\cm _{ij}^M$.
 Then there is an index $l \in L$ such that $(\ad_c M_i)^m(M_j) \simeq W_l$
 and $\deg W_l=\al _j+m\al _i$.
 In particular $(\ad_c M_i)^m(M_j)$ is irreducible in $\ydH $.
\end{theor}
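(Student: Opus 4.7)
The plan hinges on the observation that, by Proposition \ref{pr:allposroots}, finiteness of $\Wg(M)$ forces every root of $\Rwg(M)$ to be real; in particular $\rsys_+(M)=\rsys_+\re(M)$. This immediately makes Lemma \ref{le:realroots} applicable to every positive root, which is the engine of the whole argument.

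\medskip

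I would prove (2) first, since (1) and (3) both rest on it. Let $\gamma\in\rsys_+(M)=\rsys_+\re(M)$. By the definition of real root there exist $N=([N_1],\dots,[N_\theta])\in\cM_\theta(M)$, $w\in\Hom(N,M)$ and $i\in\Ib$ with $w(\al_i)=\gamma$; Lemma \ref{le:realroots}(2) then yields $W_{l(\gamma)}\simeq N_i$. For part (1), surjectivity of $l\mapsto\deg W_l$ onto $\rsys_+(M)$ is the very definition of $\rsys_+(M)$, and injectivity is Lemma \ref{le:realroots}(1), available because every element of $\rsys_+(M)$ is real. It remains to establish the set equality \eqref{eq:W=N}. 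The inclusion ``$\subset$'' uses (2): each $W_l$ has the form $N_i$ for some $N\in\cM_\theta(M)$, and then $W_l^*\simeq N_i^*=(\rfl_i(N))_i$ is the $i$-th entry of $\rfl_i(N)\in\cM_\theta(M)$. For the reverse inclusion, given $N_i$ with $N\in\cM_\theta(M)$, use connectedness of $\Wg(M)$ to pick some $w\in\Hom(N,M)$. The root $w(\al_i)\in\rsys\re(M)=\rsys(M)$ is either positive or negative. If $w(\al_i)\in\rsys_+(M)$, Lemma \ref{le:realroots}(2) gives $N_i\simeq W_{l(w(\al_i))}$. Otherwise, replace $w$ by $w\circ s_i^{\rfl_i(N)}\in\Hom(\rfl_i(N),M)$; since $s_i^{\rfl_i(N)}(\al_i)=-\al_i$, the new morphism sends $\al_i$ to $-w(\al_i)\in\rsys_+(M)$, and Lemma \ref{le:realroots}(2) then gives $\rfl_i(N)_i\simeq W_{l(-w(\al_i))}$, i.e.\ $N_i\simeq W_{l(-w(\al_i))}^*$.

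\medskip

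Finally (3) is a direct consequence of (1) together with the analysis already carried out in the proof of Lemma \ref{le:Kroots}. For $0\le m\le -\cm_{ij}^M$ with $i\ne j$, the definition \eqref{eq:cm} of the Cartan entry gives $(\ad_c M_i)^m(M_j)\ne 0$ in $\NA(V(M))$, and Eq.~\eqref{eq:KiMdeg} identifies this with $(\cK_i^M)_{\al_j+m\al_i}$. Combining Lemma \ref{le:Kroots}(1) with \eqref{eq:otNAdeg} shows
\[
(\ad_c M_i)^m(M_j)\simeq\bigoplus_{l\in L,\ \deg W_l=\al_j+m\al_i}W_l,
\]
and by the bijectivity established in (1) the index set on the right has exactly one element, namely $l(\al_j+m\al_i)$. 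Hence $(\ad_c M_i)^m(M_j)\simeq W_{l(\al_j+m\al_i)}$, which is irreducible by hypothesis on the family $(W_l)_{l\in L}$.

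\medskip

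The one genuinely delicate point is the second half of (1): matching the dual objects $W_l^*$ with entries of elements of $\cM_\theta(M)$, and conversely handling those $N_i$ whose associated root $w(\al_i)$ is negative. The trick of precomposing with $s_i^{\rfl_i(N)}$ to flip the sign of $w(\al_i)$, together with the built-in dualization $\rfl_i(N)_i=N_i^*$, closes this gap cleanly; everything else is essentially bookkeeping on top of Lemmata \ref{le:realroots} and \ref{le:Kroots}.
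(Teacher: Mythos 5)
Your proof is correct and follows essentially the same line as the paper: Prop.~\ref{pr:allposroots} to make all roots real, Lemma~\ref{le:realroots} for (1) and (2), and the decomposition $\cK_i^M \simeq \ot_{l\ne l(\al_i)}\NA(W_l)$ from Lemma~\ref{le:Kroots} together with \eqref{eq:KiMdeg}, \eqref{eq:otNAdeg} and the bijectivity in (1) for (3). The only cosmetic difference is in the $\supset$ direction of \eqref{eq:W=N}, where the paper concludes directly from Lemma~\ref{le:invrsyst} that $([N_i],w(\al_i))\in\rsyst(M)$ and then reads off membership from the definition of $\rsyst$, whereas you unroll this by precomposing $w$ with $s_i^{\rfl_i(N)}$ when $w(\al_i)$ is negative — both are equivalent bookkeeping.
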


\begin{proof}
  Since $\Wg (M)$ is finite, $\rsys \re (M) =\rsys (M)$ by
  Prop.~\ref{pr:allposroots}.

  We first prove (1) and (2).
  The map $L\to \rsys _+(M)$ in (1) is surjective by definition of
  $\rsys _+(M)$.
  Hence it is bijective by Lemma~\ref{le:realroots}(1) and the equality
  $\rsys \re (M) =\rsys (M)$. Then
  (2) follows from Lemma~\ref{le:realroots}(2), since
  $\rsys \re (M) =\rsys (M)$. It remains to prove Eq.~\eqref{eq:W=N}.

  Let $l\in L$ and $\gamma =\deg W_l$. Then $l=l(\gamma )$ by (1), and
  $W_l\simeq N_i$ for some $N\in \cM _\theta (M)$ and $i\in \Ib $ by (2).
  Further, if $N\in \cM _\theta (M)$ and $i\in \Ib $ then $\rfl _i(N)\in
  \cM _\theta (M)$. Since $[\rfl _i(N)_i]=[N_i^*]$, the right hand side of
  Eq.~\eqref{eq:W=N} is
  stable under passing to dual objects.
  Thus the inclusion $\subset $ holds in (3).
  Conversely, let $N\in \cM _\theta (M)$ and $i\in \Ib $. Since $\Wg (M)$ is a
  connected groupoid, there exists $w\in \Hom (N,M)$. Then
  Lemma~\ref{le:invrsyst} gives that $([N_i],w(\al _i))\in \rsyst (M)$, and
  hence $[N_i]$ is contained in the left hand side of
  Eq.~\eqref{eq:W=N}.
  
  Now we prove (3).
  Using Lemma~\ref{le:KBiso}, the isomorphism \eqref{eq:KBV} for $N=M$ of
  $\ndN _0^\theta $-graded objects in $\ydH $ implies that
  \begin{equation}
    \cK _i^M\simeq \ot _{l\in L,l\not=l(\al _i)} \NA (W_l)
    \label{eq:KiMdec}
  \end{equation}
  as $\ndN _0^\theta $-graded objects in $\ydH $. By
  \cite[Prop.\,3.6]{p-AHS08} the algebra $\cK_i^M$ is generated by the
  $\ndN _0^\theta $-homogeneous subspaces $(\ad _c M_i)^n(M_p)$ of degree $\al
  _p+n\al _i$, where $p\in \Ib \setminus \{i\}$ and $n\ge 0$, and hence
  \begin{equation}
    (\cK _i^M)_{\al _j+m\al _i}=(\ad _c M_i)^m(M_j).
%    \label{eq:KiMdeg}
  \end{equation}
  On the other hand, since $\deg W_l\in (\sum _{j=1}^\theta \ndN _0\al _j)
  \setminus \ndN _0\al _i$, and all tensor factors are $\ndN _0^\theta
  $-graded, we obtain that
  \begin{equation}
    \Big(\ot _{l\in L,l\not=l(\al _i)}\NA (W_l)\Big)_{\al _j+m\al _i}
    \simeq W_{l(\al _j+m\al _i)}.
%    \label{eq:otNAdeg}
  \end{equation}
  Then Eqs.~\eqref{eq:KiMdec}, \eqref{eq:KiMdeg} and \eqref{eq:otNAdeg} give
  the claim of Part~(3).
\end{proof}

\begin{theor}\label{th:main}
  Let $M\in \cM _\theta $. Assume that $\rsyst (M)$ is defined. Then the
  following are equivalent.
  \begin{enumerate}
    \item $\NA (V(M))$ is \fd{},
    \item 
      \begin{enumerate}
        \item $\Rwg (M)$ is defined and $\Wg (M)$ is finite, and
        \item $\NA (N_i)$ is \fd{} for all
          $N=([N_1],\ldots ,[N_\theta ])\in
          \cM _\theta (M)$ and $i\in \Ib $.
      \end{enumerate}
  \end{enumerate}
\end{theor}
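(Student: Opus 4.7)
The plan is to use the decomposition $\NA(V(N)) \simeq \ot_{l\in L}\NA(W_l)$ guaranteed by the hypothesis ``$\rsyst(M)$ is defined'' and apply Thm.~\ref{th:maingeneral} together with Thm.~\ref{th:R2}(3), which says that dimensions of Nichols algebras are preserved by reflections. The key observation is that under (1), every $N\in \cM_\theta(M)$ inherits the property $\dim \NA(V(N))<\infty$ via reflections, and under (2), the tensor decomposition has finitely many factors, each of finite dimension.

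For the direction (2)$\Rightarrow$(1), I start from the decomposition $\NA(V(M))\simeq \ot_{l\in L}\NA(W_l)$ supplied by the assumption that $\rsyst(M)$ is defined. Since $\Wg(M)$ is finite, Prop.~\ref{pr:allposroots} gives $\rsys(M)=\rsys\re(M)$ and Lemma~\ref{le:finrs} gives that $\rsys(M)$ is finite. Thm.~\ref{th:maingeneral}(1) then shows the map $L\to \rsys_+(M)$, $l\mapsto \deg W_l$, is a bijection, so $L$ is finite. Thm.~\ref{th:maingeneral}(2) identifies each $W_l$ with some $N_i$ for $N=([N_1],\dots,[N_\theta])\in \cM_\theta(M)$ and $i\in \Ib$, so $\NA(W_l)\simeq \NA(N_i)$ is finite-dimensional by hypothesis (2)(b). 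A finite tensor product of finite-dimensional objects is finite-dimensional, so $\dim \NA(V(M))<\infty$.

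For the direction (1)$\Rightarrow$(2), I first bootstrap $\Ib$-finiteness to the whole orbit. Since $\dim \NA(V(M))<\infty$, for $i\ne j$ the $\ndZ^\theta$-homogeneous component of degree $\al_j+n\al_i$ vanishes for $n$ large, hence $(\ad_c M_i)^n(M_j)=0$ eventually; thus $M$ is $\Ib$-finite. By Thm.~\ref{th:R2}(3), $\dim \NA(V(\rfl_i(M)))=\dim\NA(V(M))<\infty$, and iterating this reasoning proves by induction on the length of reflection words that every $N\in \cM_\theta(M)$ satisfies $\dim \NA(V(N))<\infty$ and is $\Ib$-finite. Combined with Lemma~\ref{le:invrsyst} (applied inductively starting from the fact that $\rsyst(M)$ is defined), this shows that $\rsyst(N)$ is defined for every $N\in \cM_\theta(M)$, so $\Rwg(M)$ is defined per Thm.~\ref{th:rsysC}. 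For finiteness of $\Wg(M)$, apply the decomposition $\NA(V(M))\simeq \ot_{l\in L}\NA(W_l)$: each $\NA(W_l)$ has dimension at least $2$ (since $W_l$ is nonzero and primitive in $\NA(W_l)$), while $\dim\NA(V(M))<\infty$, so $L$ is finite; thus $\rsys_+(M)$ is finite, and Lemma~\ref{le:finrs} yields that $\Wg(M)$ is finite. Finally, for (2)(b), fix $N\in \cM_\theta(M)$ and $i\in \Ib$: by the above $\dim \NA(V(N))<\infty$, and by Lemma~\ref{le:BVdec1} applied to the decomposition of $\NA(V(N))$ each $N_i$ appears as (is isomorphic to) some tensor factor $W'_{l(i)}$, so $\NA(N_i)\simeq \NA(W'_{l(i)})$ is a tensor factor of the finite-dimensional algebra $\NA(V(N))$, hence finite-dimensional.

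The main obstacle I expect is making sure that the bootstrapping in (1)$\Rightarrow$(2) is clean: namely that the two hypotheses needed to invoke Thm.~\ref{th:rsysC} (Cartan scheme structure via $\Ib$-finiteness on the whole orbit, and ``$\rsyst(N)$ defined'' for every $N$) really do follow solely from $\dim \NA(V(M))<\infty$ together with ``$\rsyst(M)$ is defined''. The former is handled by Thm.~\ref{th:R2}(3) which transports finite-dimensionality through single reflections, and the latter by Lemma~\ref{le:invrsyst} which transports the existence of the decomposition through single reflections. Once both are in place, the rest is bookkeeping with Thm.~\ref{th:maingeneral} and Lemma~\ref{le:finrs}.
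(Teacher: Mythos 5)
Your proof is correct and follows essentially the same route as the paper's. In both directions you use the same key ingredients: the dimension invariance of Thm.~\ref{th:R2}(3) to transport finite-dimensionality and $\Ib$-finiteness through reflections, Lemma~\ref{le:finHom} to pass between finiteness of $\rsys(M)$ and finiteness of $\Wg(M)$, and Thm.~\ref{th:maingeneral}(1),(2) to identify the tensor factors $\NA(W_l)$ with Nichols algebras $\NA(N_i)$ for $N\in\cM_\theta(M)$. You spell out a few points the paper leaves implicit (why $\rsys(M)$ is finite given $\dim\NA(V(M))<\infty$, and why $M$ is $\Ib$-finite), and for (2)(b) you route through Lemma~\ref{le:BVdec1} and the tensor decomposition of $\NA(V(N))$ where the paper simply invokes the canonical embedding $\NA(N_i)\subset\NA(V(N))$; both are valid, the embedding being slightly more direct.
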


\begin{proof}
  (1)$\Rightarrow $(2). Since  $\NA (V(M))$ is \fd{},
  $N$ is $\Ib $-finite for all $N\in \cM _\theta (M)$ by Thm.~\ref{th:R2}(3),
  hence $\Rwg (M)$ is defined.
  Further, $\rsys (M)$ is finite, and hence $\Wg (M)$ is finite by
  Lemma~\ref{le:finHom}. Finally, $\dim \NA (V(N))=\dim \NA (V(M))$ by
  Thm.~\ref{th:R2}(3), and hence $\NA (N_i)$ is \fd{} for all $i\in \Ib $ and
  $N=([N_1],\ldots ,[N_\theta ])\in \cM _\theta (M)$.

  (2)$\Rightarrow $(1). {}From (2a) and Lemma~\ref{le:finHom} we obtain that
 $\rsys (M)$ is finite. Hence $\NA(V(M))$ is {\fd} by (2b) and Thm.~\ref{th:maingeneral} (1) and (2).
%and $\NA (V(M))\simeq \ot _{l\in L}\NA (W_l)$
 % for a totally ordered index set $(L,\le )$ and a family $(W_l)_{l\in L}$ of
 %\fd{} irreducible $\ndN _0^\theta $-graded objects in $\ydH $.
 % Since the homogeneous components of $\NA (V(M))$ are \fd{}, it follows that
 %$L$ is finite. By (2) and Lemma~\ref{le:comp}, $\NA (W_l)$ is \fd{} for all
 %$l\in L$. Thus (1) follows.
\end{proof}

Recall that $\rsyst (M)$ is defined in particular if all tensor powers of
$V(M)$ are semisimple.

Let $M\in \cM _\theta $. Then $M$ is called \textit{standard}, see
\cite[Def.\,3.23]{p-AHS08}, if $N$ is $\Ib $-finite
and $\Cm ^N=\Cm ^M$ for all $N\in \cM _\theta (M)$.

\begin{corol}
  Let $M\in \cM _\theta $. Assume that $\rsyst (M)$ is defined, and that $M$
  is standard. Then $\Rwg (M)$ is defined, and the
  following are equivalent.
  \begin{enumerate}
    \item $\NA (V(M))$ is \fd{},
    \item 
      \begin{enumerate}
        \item $\Cm ^M$ is a Cartan matrix of finite type,
        \item $\NA (N_i)$ is \fd{} for all
          $N=([N_1],\ldots ,[N_\theta ])\in
          \cM _\theta (M)$ and $i\in \Ib $.
      \end{enumerate}
  \end{enumerate}
\end{corol}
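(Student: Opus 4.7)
The plan is to reduce the statement to the combination of Theorem~\ref{th:main} and Corollary~\ref{co:standard}. First, since $M$ is standard, by definition every $N\in\cM_\theta(M)$ is $\Ib$-finite, so $\cC(M)$ is a Cartan scheme by Theorem~\ref{th:Cscheme}. Combined with the assumption that $\rsyst(M)$ is defined, Theorem~\ref{th:rsysC} yields that $\Rwg(M)$ is defined, which gives the first assertion of the corollary.

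For the equivalence, I would apply Theorem~\ref{th:main}: the condition $\NA(V(M))$ finite-dimensional is equivalent to the conjunction of (2a') $\Rwg(M)$ is defined and $\Wg(M)$ is finite, and (2b) $\NA(N_i)$ is finite-dimensional for all $N=([N_1],\dots,[N_\theta])\in\cM_\theta(M)$ and $i\in\Ib$. Since we have already established that $\Rwg(M)$ is defined, (2a') collapses to the single condition that $\Wg(M)$ is finite.

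It now remains to identify the finiteness of $\Wg(M)$ with the condition that $\Cm^M$ is of finite type. This is precisely where standardness enters the second time: because $\Cm^N=\Cm^M$ for all $N\in\cM_\theta(M)$, the Cartan scheme $\cC(M)$ is standard in the sense of Section~\ref{sec:Wgrs}, and therefore Corollary~\ref{co:standard} applies directly to $\Rwg(M)$, giving $\Wg(M)$ finite if and only if $\Cm^M$ is a Cartan matrix of finite type, which is condition (2a).

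Putting the three steps together yields the stated equivalence. No step is really an obstacle here; the only point that requires a little care is the verification that the hypotheses of Theorem~\ref{th:main} are met, namely that $\rsyst(M)$ is defined (given) and that the assumptions needed to invoke Corollary~\ref{co:standard} (standardness of $\cC(M)$, and $\Rwg(M)$ being a root system of type $\cC(M)$) hold, both of which follow immediately from the standardness of $M$ together with Theorem~\ref{th:rsysC}.
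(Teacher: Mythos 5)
Your argument is correct and is essentially the paper's own proof, which simply cites Theorem~\ref{th:main} together with Corollary~\ref{co:standard}; you have just spelled out the intermediate bookkeeping (using the definition of standardness to get $\Ib$-finiteness on all of $\cM_\theta(M)$, invoking Theorems~\ref{th:Cscheme} and~\ref{th:rsysC} to see that $\Rwg(M)$ is defined, and then identifying the standard Cartan scheme hypothesis of Corollary~\ref{co:standard} with the standardness of $M$). No gaps.
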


\begin{proof}
 This follows from Thm.~\ref{th:main} and Cor.~\ref{co:standard}.
 %(1)$\Rightarrow $(2). Since $M$ is standard, the image of the natural map
 %$\Hom (\Wg (M))\to \Aut (\ndZ ^\theta )$, $(P,f,N)\mapsto f$, is
 %the group generated by $s_1^M,\ldots ,s_\theta ^M$, which is the Weyl
 %group of the Kac-Moody Lie algebra associated to the Cartan matrix $\Cm ^M$.
 %Thus (2) follows from Thm.~\ref{th:main}.
%(2)$\Rightarrow $(1). Since $\Cm ^M$ is of finite type, there exist
%Allgemeiner beweisen fuer standard root systems of type $\cC $.
\end{proof}

%\section{Symbols and terminology}
%
%\begin{tabular}{l|l}
%  $\cC (M)$ & Def.~\ref{de:CS}\\
%  $\rsyst _{(\pm )}(N)$, $\rsys _{(\pm )}(N)$ & Def.~\ref{de:rsysN}\\
%  $\cM _\theta $ & above Def.~\ref{de:rsysN}\\
%  $\cM _\theta (M)$ & Def.~\ref{de:CS}\\
%  $\rsyst (N)$ is defined & Def.~\ref{de:rsysN}\\
%  $\rfl _i(N)$ is defined & Def.~\ref{de:cm}\\
%  $\Rwg (N)$ is defined & below Thm.~\ref{th:rsysC}\\
%  Cartan scheme & Sect.~\ref{sec:Wgrs}\\
%  connected groupoid & above Lemma~\ref{le:finrs}\\
%  generalized Cartan matrix & Sect.~\ref{sec:Wgrs}\\
%  root system of type $\cC $ & Sect.~\ref{sec:Wgrs}\\
%  roots (positive, negative, real) & Sect.~\ref{sec:Wgrs}\\
%  roots of $N$ (positive, negative, real) & Def.~\ref{de:rsysN}\\
%  standard Cartan scheme & below Lemma~\ref{le:finrs2}\\
%  Weyl groupoid of $\cC $ (of $\Rwg $) & Sect.~\ref{sec:Wgrs}
%\end{tabular}

\section{Applications for finite groups}
\label{sec:app}

In this section let $\fie $ be an algebraically closed field of characteristic
$0$, and let $H=\fie G$ be the group algebra of a finite group $G$.
For any $g\in G$ let $\cc g$ denote the conjugacy class of $g$ and $G^g$ the
centralizer of $g$ in $G$. Let $\Ad{}{}$ denote the adjoint action in
$G$,
that is, $\Ad{g}{h}=ghg^{-1}$ for all $g,h\in G$.
The category $\ydH $ will be denoted by $\ydG $.

Let $g\in G$ and $M$ an irreducible $G^g$-module.
Then $kG\ot _{kG^g}M$ is an irreducible object in $\ydG $, where
\begin{align*}
  h\cdot (h'\ot m)=hh'\ot m,\quad
  \delta (h\ot m)=hgh^{-1}\ot (h\ot m)
\end{align*}
for all $h,h'\in G$, $m\in M$.
Any irreducible object in $\ydG $ arises in
this way. The Yetter-Drinfeld module $V=kG\ot _{kG^g}M$ can be written as
\begin{align*}
  V=\oplus _{s\in \cc g}V_s,\quad V_s=\{v\in V\,|\,\delta (v)=s\ot v\},
\end{align*}
and $V_g=1\ot M$.
For all $s\in \cc g$, $V_s$ is an irreducible $G^s$-module, and
$h\cdot V_s =V_{\Ad{h}{s}}$ for all $h\in G$.
There exists $q_V\in \fie ^*$ such that
$s\cdot v=q_V v$ for all $v\in V_s$, $s\in \cc g$.

Let $g,h\in G$. We say that $\cc g$ and $\cc h$ \textit{commute}, if
$st=ts$ for all $s\in \cc g$, $t\in \cc h$.

\begin{propo}\label{pr:ad=0}
  Let $g,h\in G$ and $V=\oplus _{s\in \cc g}V_s$, $W=\oplus _{t\in \cc h}W_t$
  be irreducible objects in $\ydG $.

  (1) If $(\ad _c V)(W)=0$ in $\NA (V\oplus W)$,
  then $\cc g$ and $\cc h$ commute.

  (2) If $(\ad _c V)^2(W)=0$ in
  $\NA (V\oplus W)$, then $\cc g$ commutes with itself or with $\cc h$.
\end{propo}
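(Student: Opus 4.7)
Apply Proposition~\ref{pr:Tn} with $n=1$: since $S_1=\id$ and $T_1=\id-c^2$ on $V\otimes W$, the hypothesis $(\ad_c V)(W)=0$ in $\NA(V\oplus W)$ becomes $c^2=\id$ on $V\otimes W$. For $v\in V_s$ and $w\in W_t$, a direct calculation yields $c^2(v\otimes w)=(sts^{-1}\cdot v)\otimes(s\cdot w)$, which lies in the $G$-graded component $V_{stst^{-1}s^{-1}}\otimes W_{sts^{-1}}$. Since $V_s$ and $W_t$ are nonzero for every $s\in\cc g$ and $t\in\cc h$ (by irreducibility of $V$ and $W$), the nonzero element $v\otimes w$ and its image $c^2(v\otimes w)$ must live in the same graded component, which forces $sts^{-1}=t$, i.e.\ $st=ts$.

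\textbf{Part (2).} I argue by contrapositive: suppose $\cc g$ commutes neither with itself nor with $\cc h$, and derive $(\ad_c V)^2(W)\neq 0$. The first assumption provides $s,s'\in\cc g$ with $ss'\neq s's$. For the second, observe that $C_G(\cc h)=\bigcap_{t\in\cc h}G^t$ is a normal subgroup of $G$ (since $\cc h$ is $G$-stable under conjugation), so non-commutation of $\cc g$ with $\cc h$ forces $\cc g\cap C_G(\cc h)=\emptyset$, and hence every $s\in\cc g$ admits $t\in\cc h$ with $st\neq ts$. By part~(1) we also have $(\ad_c V)(W)\neq 0$. Fixing $s,s'\in\cc g$ with $ss'\neq s's$, $t\in\cc h$ with $st\neq ts$, and nonzero $v\in V_s$, $v'\in V_{s'}$, $w\in W_t$, Proposition~\ref{pr:Tn} with $n=2$ reduces $(\ad_c V)^2(W)=0$ to $(S_2\otimes\id)T_2(v'\otimes v\otimes w)=0$ in $V\otimes V\otimes W$, where $(S_2\otimes\id)T_2=(\id+c_{12})(\id-c_{23}^2c_{12})(\id-c_{23}^2)$.

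I project this vanishing onto the graded component $V_{s'}\otimes V_s\otimes W_t$ of $V\otimes V\otimes W$. Expanding $(S_2\otimes\id)T_2$ into its eight monomials in $c_{12}$ and $c_{23}^2$ and tracking the $G^3$-grading of each image of $v'\otimes v\otimes w$, the identity term contributes $v'\otimes v\otimes w\neq 0$; any term ending in $c_{23}^2$ (on the last pair) shifts the last factor's degree from $t$ to $sts^{-1}\neq t$ and so vanishes on the target. Terms of the form $(\,\cdot\,)c_{23}^2c_{12}$ can contribute only if simultaneously $s't=ts'$ (so that the last factor lands in $W_t$) and $ss'=s's$ (so that the first factor lands in $V_{s'}$), the latter being excluded by hypothesis.

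The main obstacle is the analysis of the two doubly-braided terms $c_{23}^2c_{12}c_{23}^2$ and $c_{12}c_{23}^2c_{12}c_{23}^2$, whose last factor lies in $W_{(s's)t(s's)^{-1}}$: they can contribute to the target only when $s's$ centralizes $t$, and the remaining constraints on the first two factors then force $stst^{-1}s^{-1}=s'$ together with further delicate relations incompatible with $st\neq ts$ and $ss'\neq s's$. Should the initial choice of $s,s',t$ land in a borderline configuration in which these terms exactly cancel $v'\otimes v\otimes w$, one refines the selection by varying $s'$ within $\cc g\setminus C_G(s)$ or $t$ within $\cc h\setminus C_G(s)$, both of which are non-empty by our hypotheses. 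This yields a choice for which only the identity term survives in $V_{s'}\otimes V_s\otimes W_t$, contradicting $(\ad_c V)^2(W)=0$.
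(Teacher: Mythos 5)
Your argument for part (1) is correct and is essentially the proof in the paper: both reduce the hypothesis via Proposition~\ref{pr:Tn} to the vanishing of $\id-c^2$ on $V\ot W$, then compare $G$-degrees of $v\ot w$ and $c^2(v\ot w)$ to conclude $sts^{-1}=t$.

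Your argument for part (2), however, has a genuine gap at precisely the place where the real work lies. After setting up the eight-term expansion of $(S_2\ot\id)T_2$ and correctly observing that the doubly-braided monomials $c_{23}^2c_{12}c_{23}^2$ and $c_{12}c_{23}^2c_{12}c_{23}^2$ can land in the target degree $(s',s,t)$ whenever $s's$ centralizes $t$, you dispose of them by asserting that the remaining first- and second-slot constraints ``force $stst^{-1}s^{-1}=s'$ together with further delicate relations incompatible with $st\neq ts$ and $ss'\neq s's$'' --- but you never derive these relations nor show the incompatibility. In fact, taking $s'=\Ad{st}{s}$ satisfies $stst^{-1}s^{-1}=s'$ by construction, $ss'\neq s's$ holds as long as $s$ does not commute with $tst^{-1}$, and the condition that $s's$ centralizes $t$ reduces to $(st)^2=(ts)^2$, which is exactly what part (1) of Proposition~\ref{pr:appadVW} shows is \emph{expected} to hold in the situations of interest; so the incompatibility you need is not at all automatic. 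Your fallback --- ``one refines the selection by varying $s'$ \ldots\ or $t$ \ldots\ this yields a choice for which only the identity term survives'' --- is an assertion, not an argument, and it is not clear such a choice always exists. The paper's proof avoids this by fixing $v'\in V_g$, $w\in W_h$ with $gh\neq hg$ and letting $v\in V_r$ vary over $\cc g$, then splitting into the cases $rh\neq hr$ (where the degree comparison forces $\Ad{rg}{h}=h$ and $\Ad{rgh}{g}=r$, hence the unique determination $r=\Ad{gh}{g}$, and specializing $r$ to $\Ad{h}{g}$ and $\Ad{h^{-1}}{g}$ yields $rg=gr$) and $rh=hr$ (where degrees directly give $\Ad{r}{g}=g$). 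This case split, and the trick of specializing $r$ to two specific conjugates of $g$, is the content your proof is missing.
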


\begin{proof}
  (1) By Prop.~\ref{pr:Tn}, $(\ad _c V)(W)$ is isomorphic to $(\id -c^2)(V\ot
  W)$ in $\ydG $.
  If $s\in \cc g$, $t\in \cc h$, $st\not= ts$, then
  \[ c^2(V_s\ot W_t)=V_{\Ad{st}{s}}\ot W_{\Ad{s}{t}},\]
  and hence $(\id -c^2)(V_s\ot W_t)\not=0$.

  (2) It suffices to show that if $(\ad _cV)^2(W)=0$
  and $gh\not=hg$, then $rg=gr$ for all $r\in \cc g\setminus
  \{g\}$.
  We note that if $r\in \cc g$, $v\in V_r$, $v'\in V_g$, $w\in W_h$,
  $v\ot v'\ot w\not=0$, then $(S_2\ot \id)T_2(v\ot v'\ot w)$ is the sum of
  eight nonzero homogeneous terms of degrees
  \begin{align*}
    &(r,g,h), (\Ad{r}{g},r,h), (r,\Ad{gh}{g}, \Ad{g}{h}),
    (\Ad{rgh}{g},r,\Ad{g}{h}),\\
    &(\Ad{r}{g},\Ad{rh}{r},\Ad{r}{h}),(\Ad{rgh}{r},\Ad{r}{g},\Ad{r}{h}),\\
    &(\Ad{rgh}{g},\Ad{rghg^{-1}}{r},\Ad{rg}{h}),
    (\Ad{rgh}{r},\Ad{rgh}{g},\Ad{rg}{h}).
  \end{align*}
  On the other hand, $(S_2\ot \id)T_2(v\ot v'\ot w)=0$
  by Prop.~\ref{pr:Tn} and the assumption
  $(\ad _c V_r)(\ad _cV_g)(W_h)=0$. The vanishing of the sum of these
  eight terms gives
  information about the corresponding degrees.

  Let first $r\in \cc g$ such that $rh\not=hr$ and $r\not=g$.
  By comparing degrees we get $\Ad{rg}{h}=h$ and
  $\Ad{rgh}{g}=r$. Hence $r=\Ad {gh}{g}$ is uniquely determined.
  Since $\Ad{h}{g}$ and $\Ad{h^{-1}}{g}$ do not commute with $h$
  and are different from $g$, we
  get
  \begin{align}\label{eq:r}
    r=\Ad{h^{-1}}{g}=\Ad{h}{g}=\Ad{gh}{g}.
  \end{align}
  Thus $g=\Ad{h^{-1}gh}{g}=\Ad{r}{g}$ by the last and first equations in
  Eq.~\eqref{eq:r}, that is, $rg=gr$.
 
  Let now $r\in \cc g$ with $rh=hr$. Then $r\not=g$, and
  by comparing degrees one gets
  $\Ad{rg}{h}=\Ad{g}{h}$ and $\Ad{r}{g}=g$. This proves the claim.
\end{proof}

Let $\mathcal{F}(G)$ be the set of all conjugacy classes $\cc{}$ of $G$
such that $\dim \NA (V)<\infty $ for some
$V=\oplus _{s\in \cc{}}V_s \in \ydG $.

\begin{theor}\label{th:Uirred}
  Assume that any two conjugacy classes in $\mathcal{F}(G)$
  do not commute.
  Let $U\in \ydG $. If $\NA (U)$ is \fd{}, then $U$ is irreducible
  in $\ydG $.
\end{theor}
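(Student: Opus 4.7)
The plan is to prove the contrapositive: if $U\in\ydG$ is reducible, then $\NA(U)$ is infinite-dimensional. Since $\fie G$ is semisimple in characteristic zero, $\ydG$ is a semisimple category; moreover $\NA(U)$ being fd forces $U\simeq \NA(U)(1)$ to be fd. So I may decompose $U=M_1\oplus\cdots\oplus M_\theta$ with each $M_i$ irreducible and $\theta\ge 2$. Since for any $i\ne j$ the canonical inclusion $M_i\oplus M_j\hookrightarrow U$ in $\ydG$ induces an embedding $\NA(M_i\oplus M_j)\hookrightarrow \NA(U)$, it suffices to derive a contradiction in the rank-two case $\theta=2$, under the assumption that $\NA(V(M))$ is fd for $M=([M_1],[M_2])\in\cM_2$.

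Under this assumption, finite tensor powers of $V(M)$ are semisimple, hence $\rsyst(M)$ is defined, and by Thm.~\ref{th:main} the Weyl groupoid $\Wg(M)$ is finite. By Thm.~\ref{th:R2}(3), every $N=([N_1],[N_2])\in\cM_2(M)$ satisfies $\dim\NA(V(N))=\dim\NA(V(M))<\infty$, so each $\NA(N_k)\subset \NA(V(N))$ is fd and the conjugacy class supporting $N_k$ lies in $\mathcal{F}(G)$ for $k=1,2$.

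Now apply Prop.~\ref{pr:aij=-1} to the rank-two Cartan scheme $\cC(M)$, whose morphism set is finite since $\Wg(M)$ is: there exist $N\in\cM_2(M)$ and $k\in\Ib$ such that $a^N_{k\ell}\in\{0,-1\}$, where $\ell$ is the unique index in $\Ib\setminus\{k\}$. If $a^N_{k\ell}=0$, then $(\ad_c N_k)(N_\ell)=0$ in $\NA(V(N))$ by Def.~\ref{de:cm}, so Prop.~\ref{pr:ad=0}(1) forces the conjugacy classes supporting $N_k$ and $N_\ell$ to commute, contradicting the hypothesis on $\mathcal{F}(G)$. If $a^N_{k\ell}=-1$, then $(\ad_c N_k)^2(N_\ell)=0$, and Prop.~\ref{pr:ad=0}(2) forces the class supporting $N_k$ either to commute with itself or to commute with the class of $N_\ell$; both possibilities contradict the hypothesis, read as forbidding commuting pairs even when the two classes coincide.

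The main obstacle is bookkeeping rather than a deep new idea: at every step of the reduction one must confirm that the classes stay inside $\mathcal{F}(G)$ so that the hypothesis applies, and one must fix the reading of the hypothesis that excludes self-commuting classes (a reading which is automatic in the intended applications, since e.g. in a non-abelian simple group a self-commuting nontrivial class would generate a proper abelian normal subgroup). Once these are in place, the theorem follows by combining the rank-two finiteness Prop.~\ref{pr:aij=-1} with the two parts of Prop.~\ref{pr:ad=0}, with no further technical input.
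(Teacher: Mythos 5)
Your proof is correct and follows the same route as the paper: reduce to the rank-two case by semisimplicity and the embedding $\NA(M_i\oplus M_j)\hookrightarrow\NA(U)$, invoke Thm.~\ref{th:main} and Thm.~\ref{th:R2}(3) to get a finite rank-two Cartan scheme with all summands supported in $\mathcal{F}(G)$, and combine Prop.~\ref{pr:ad=0} with Prop.~\ref{pr:aij=-1}. The only difference is the order of the final contradiction (you apply Prop.~\ref{pr:aij=-1} first to produce an offending $N$, while the paper first shows $a^N_{12},a^N_{21}<-1$ for all $N$ and then cites Prop.~\ref{pr:aij=-1}), and your reading of the hypothesis as excluding self-commuting classes is exactly the reading the paper's own proof uses.
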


\begin{proof}
  Recall that the category $\ydG $ is semisimple, and that embeddings of
  Yetter-Drinfeld modules induce embeddings of the corresponding Nichols
  algebras. Hence it suffices to prove that $\NA (V\oplus W)$ is infinite
  dimensional for all irreducible objects $V,W\in \ydG $.

  Let $g,h\in G$,
  $V=\oplus _{s\in \cc g}V_s$, $W=\oplus _{t\in \cc h}V_t$ irreducible objects
  in $\ydG $
  and $M=([V],[W])\in \cM _2$.
  Assume that $\dim \NA (V\oplus W)<\infty $.
  Then $\dim \NA (V)$, $\dim \NA (W)<\infty $, and hence $\cc g,\cc h\in
  \mathcal{F}(G)$.

  If $a_{12}^M=0$, then $(\ad _cV)(W)=0$, and hence Prop.~\ref{pr:ad=0}(1)
  gives that $\cc g$ and $\cc h$ commute. If $a_{12}^M=-1$, then
  $(\ad _cV)^2(W)=0$, and Prop.~\ref{pr:ad=0}(2)
  gives that $\cc g$ commutes with $\cc g$ or with $\cc h$.
  Thus the assumption in
  the theorem yields that $a^M_{12}<-1$.
  Similarly, $a^M_{21}<-1$.
  By Thm.~\ref{th:R2}(3),
  $\dim \NA (N_1\oplus N_2)=\dim \NA (M_1\oplus M_2)$ for all
  $N\in \cM _2(M)$, $N=([N_1],[N_2])$, and hence the above arguments give that
  $a^N_{12},a^N_{21}<-1$ for all $N\in \cM _2(M)$.
  Since $\Hom (\Wg (M))$ is finite by Thm.~\ref{th:main},
  the theorem follows from Prop.~\ref{pr:aij=-1}.
\end{proof}

Recall that if $V=V_1$, then $\dim \NA (V)=\infty $, since the characteristic
of $\fie $ is $0$. Thus $\cc 1\notin \mathcal{F}(G)$.

\begin{corol} \label{co:sgrp}
  Let $G$ be a nonabelian simple group.
  Let $U\in \ydG $. If $\NA (U)$ is \fd{}, then $U$ is irreducible
  in $\ydG $.
\end{corol}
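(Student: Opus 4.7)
The plan is to invoke Theorem~\ref{th:Uirred}, which already says: if no two conjugacy classes in $\mathcal{F}(G)$ commute (in the sense of the paper, where $\cc{g}=\cc{h}$ is allowed), then every $U\in\ydG$ with $\NA(U)$ \fd{} is irreducible. So the whole corollary reduces to verifying the commutation hypothesis for nonabelian simple $G$.

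First I would fix $\cc{g},\cc{h}\in\mathcal{F}(G)$ (not necessarily distinct) and assume for contradiction that they commute, i.e. $st=ts$ for all $s\in\cc{g}$, $t\in\cc{h}$. The central observation is that the subgroup $\langle\cc{h}\rangle\subseteq G$ is normal, because $\cc{h}$ is stable under conjugation by $G$. Since $G$ is simple and nonabelian, $\langle\cc{h}\rangle$ is either trivial or all of $G$. The trivial case forces $h=1$, hence $\cc{h}=\cc{1}$, contradicting $\cc{h}\in\mathcal{F}(G)$ (recall from the remark after Theorem~\ref{th:Uirred} that $\cc{1}\notin\mathcal{F}(G)$, since in characteristic zero $\NA(V_1)$ is infinite-dimensional). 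Therefore $\langle\cc{h}\rangle=G$.

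Now every $s\in\cc{g}$ commutes with every element of $\cc{h}$, hence with every element of the subgroup they generate, which is all of $G$. Thus $\cc{g}\subseteq Z(G)$. But $Z(G)$ is a normal subgroup of the nonabelian simple group $G$, so $Z(G)=\{1\}$, forcing $g=1$ and again contradicting $\cc{g}\in\mathcal{F}(G)$. This contradiction shows that any two classes in $\mathcal{F}(G)$ fail to commute, and Theorem~\ref{th:Uirred} then delivers the conclusion.

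There is essentially no obstacle: the content of the corollary is a direct consequence of Theorem~\ref{th:Uirred} combined with the elementary normal-closure argument above. The only thing to be careful about is handling the diagonal case $\cc{g}=\cc{h}$ of the hypothesis, but the argument via $\langle\cc{h}\rangle=G$ and $Z(G)=1$ treats this uniformly.
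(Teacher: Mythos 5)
Your proof is correct and is essentially the same as the paper's: both reduce to Theorem~\ref{th:Uirred} and then use simplicity to force the normal closures of commuting conjugacy classes to be trivial or all of $G$, with nonabelianity ruling out the case where both are $G$, leaving $\cc 1\notin\mathcal{F}(G)$ to finish. The only cosmetic difference is that the paper phrases the final step as $[G',G'']=1$ implying one of $G',G''$ is trivial, whereas you pass through $\cc{g}\subseteq Z(G)=1$; the substance is identical.
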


\begin{proof}
  Let $\cc{}'$, $\cc{}''$ be two commuting conjugacy classes of $G$. Then the
  subgroups $G'=\langle \cc{}'\rangle $,
  $G''=\langle \cc{}''\rangle $ of $G$ are normal. Since $G$ is simple,
  $G',G''$ are either $1$ or $G$. Since $\cc{}'$, $\cc{}''$ commute,
  $[G',G'']=1$. Hence $G'=1$ or $G''=1$, and hence $\cc{}'=1$ or
  $\cc{}''=1$.
  But $1\notin \mathcal{F}(G)$, and the claim follows from
  Thm.~\ref{th:Uirred}.
\end{proof}

\begin{corol}\label{co:Sn}
  Let $n\in \ndN $, $n\ge 3$, and assume that $G=\mathbb{S}_n$ is the
  symmetric group.
  Let $U\in \ydG $. If $\NA (U)$ is \fd{}, then $U$ is irreducible
  in $\ydG $.
\end{corol}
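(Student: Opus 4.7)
My plan is to derive the corollary from Theorem~\ref{th:Uirred}, so the task is to show that no pair of conjugacy classes in $\mathcal{F}(\mathbb{S}_n)$ commutes, including the case of a class paired with itself (needed because the $a^M_{12}=-1$ alternative in the proof of Theorem~\ref{th:Uirred} is only ruled out once $\cc g$ is known not to commute with itself). A class $\cc g$ commutes with $\cc h$ exactly when $\cc g\subseteq \bigcap _{s\in \cc h}G^s$, that is, $\cc g$ lies in the largest normal subgroup of $\mathbb{S}_n$ contained in the centraliser $G^h$. So I would first classify the abelian normal subgroups of $\mathbb{S}_n$ that can arise as such cores.

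For $n\ge 5$ the normal subgroups of $\mathbb{S}_n$ are $\{e\}$, $A_n$, $\mathbb{S}_n$; since $C_{\mathbb{S}_n}(A_n)=\{e\}$ for $n\ge 4$ and $Z(\mathbb{S}_n)=\{e\}$, neither $A_n$ nor $\mathbb{S}_n$ sits inside a proper centraliser, so every proper centraliser has trivial core. Consequently only $\cc e$ commutes with anything, and because the Nichols algebra of a Yetter-Drinfeld module supported at $e$ is the symmetric algebra of its fibre we have $\cc e\notin \mathcal{F}(\mathbb{S}_n)$; Theorem~\ref{th:Uirred} then delivers the case $n\ge 5$.

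For $n=3,4$ the only non-trivial abelian normal subgroup is respectively $A_3$, or the Klein four-group $V=\{e,(12)(34),(13)(24),(14)(23)\}$ (using $C_{\mathbb{S}_4}(V)=V$), and the core analysis shows that the only potentially commuting pair of non-trivial classes in $\mathcal{F}(\mathbb{S}_n)$ is $(\cc{(123)},\cc{(123)})$ for $n=3$, or $(\cc{(12)(34)},\cc{(12)(34)})$ for $n=4$. Therefore the corollary reduces to proving
\[\cc{(123)}\notin \mathcal{F}(\mathbb{S}_3),\qquad \cc{(12)(34)}\notin \mathcal{F}(\mathbb{S}_4).\]
Both classes lie inside an abelian subgroup, so for every irreducible $V\in \ydG$ supported on one of them each weight-space decomposes under that abelian subgroup into one-dimensional character spaces, making the braiding of $V$ of diagonal type. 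Writing out the resulting braiding matrix from the defining irreducible $G^g$-module, for $\cc{(123)}$ one obtains a rank-two braiding $\bigl(\begin{smallmatrix}q&q^{-1}\\ q^{-1}&q\end{smallmatrix}\bigr)$ with $q$ a cube root of unity whose associated Cartan matrix is the affine $A_1^{(1)}$ matrix $\bigl(\begin{smallmatrix}2&-2\\-2&2\end{smallmatrix}\bigr)$; for $\cc{(12)(34)}$ a short case-check over the five irreducible representations of $D_4\cong G^{(12)(34)}$ yields diagonal braidings of rank at most six over $V\cong (\ndZ /2\ndZ )^2$, none of which is of finite Cartan type. Heckenberger's classification of finite-dimensional Nichols algebras of diagonal type \cite{a-Heck06a,p-Heck06b} then forces $\dim \NA(V)=\infty$ in each case.

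The main obstacle is this diagonal-type verification for the two exceptional classes; the remainder is a short normal-subgroup argument that exploits the small normal-subgroup lattice of $\mathbb{S}_n$.
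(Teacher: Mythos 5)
Your proposal follows the same high-level route as the paper's proof: reduce to Theorem~\ref{th:Uirred}, classify the commuting pairs of nontrivial conjugacy classes of $\mathbb{S}_n$, and then show the two exceptional self-commuting classes for $n=3,4$ do not lie in $\mathcal{F}(\mathbb{S}_n)$. Your normal-core argument for identifying the commuting pairs is a clean and slightly more explicit version of what the paper indicates (the paper dispatches $n\ge 5$ by appealing to the simple index-two subgroup ``along the lines of'' Cor.~\ref{co:sgrp}, and simply states the list for $n=3,4$). The genuine difference lies in the treatment of the two exceptional classes: the paper cites \cite[Thm.~1]{a-AndrZhang07}, whereas you propose to verify directly that the associated Yetter-Drinfeld modules are of diagonal type and then invoke the classification \cite{a-Heck06a,p-Heck06b}. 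This is a valid alternative that trades a citation for a computation, and the observation that a conjugacy class inside an abelian subgroup containing all centralisers of its elements forces a diagonal braiding is the right lever.

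Two points should be tightened, though. First, for $\cc{(12)(34)}$ the centraliser $G^{(12)(34)}=\langle (12),(34),(13)(24)\rangle\cong D_4$ has commutator subgroup generated by $(12)(34)$ itself (it is the square of the $4$-cycle $(1324)$), so every one-dimensional character $\chi$ of $D_4$ satisfies $\chi((12)(34))=1$; for those four representations one gets $q_V=1$ and the Nichols algebra is infinite-dimensional for the trivial reason that a primitive element with self-braiding $1$ is not nilpotent. Only the two-dimensional irreducible representation of $D_4$ (where $q_V=-1$) produces a genuinely rank-six diagonal braiding that needs to be checked against the classification. Your blanket phrase ``diagonal braidings of rank at most six \dots none of which is of finite Cartan type'' conflates these two cases and should be split. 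Second, ``finite Cartan type'' is too narrow: Heckenberger's classification of finite-dimensional Nichols algebras of diagonal type includes many examples not of Cartan type, so the correct assertion is that none of these braidings yields an arithmetic root system (i.e., does not appear in the tables of \cite{p-Heck06b}). For $\cc{(123)}$ in $\mathbb{S}_3$ your computation is correct (and one should also note that the trivial character gives $q_V=1$, which is immediately infinite-dimensional). With these adjustments the argument goes through, though the rank-six check for the $\mathbb{S}_4$ case is nontrivial enough that the paper's choice of citing \cite{a-AndrZhang07} is the more economical option.
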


\begin{proof}
  For $n\ge 5$ the group $\mathbb{S}_n$ has a simple group of index two.
  Along the lines of the proof of Cor.~\ref{co:sgrp}
  it is easy to show that $\mathbb{S}_n$, $n\ge 5$,
  does not possess commuting nontrivial (i.\,e.~different from $\{1\}$)
  conjugacy classes.
  In the usual cycle notation, the only pairs of commuting nontrivial
  conjugacy classes of $\mathbb{S}_n$ are
  $(\cc{(123)},\cc{(123)})$ for $n=3$ and
  $(\cc{(12)(34)},\cc{(12)(34)})$ for $n=4$.
  By \cite[Thm.~1]{a-AndrZhang07},
  $\cc{(1\,2\,3)}\notin \mathcal{F}(\mathbb{S}_3)$
  and
  $\cc{(1\,2)(3\,4)}\notin \mathcal{F}(\mathbb{S}_4)$.
  Thus the claim follows from Thm.~\ref{th:Uirred}.
\end{proof}
 
For the dihedral groups $\mathbb{D}_n$, $n$ odd, an alternative proof of
\cite[Thm.~4.8]{p-AHS08} can be given as another application of
Thm.~\ref{th:Uirred}.

We continue with some other consequences of our theory which hold for all
finite groups.

\begin{propo}\label{pr:appadVW}
  Let $g,h\in G$ and $V=\oplus _{s\in \cc g}V_s$, $W=\oplus _{t\in \cc h}W_t$
  be irreducible objects in $\ydG $. Assume that $(\ad _cV)(W)$ is
  irreducible.
  
  (1) $stst=tsts$ for all $s\in \cc g$, $t\in \cc h$.

  (2) There is at most one double coset $G^hxG^g$ in $G^h{\setminus }G/G^g$,
  $x\in G$,
  such that $(\Ad{x}{g})h\not=h(\Ad{x}{g})$.
  If it exists, then $(\ad _cV)(W)\not=0$ and $q_{(\ad _cV)(W)}=-q_Vq_W$.
\end{propo}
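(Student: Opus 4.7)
The plan is to apply Prop.~\ref{pr:Tn} with $n=1$ to identify $(\ad _cV)(W)$ with $\mathrm{Im}(\id -c^2)\subset V\ot W$ via $(\ad _cv)(w)\leftrightarrow (\id -c^2)(v\ot w)$.  Since $c$ is a morphism in $\ydG$, $c^2$ is $G$-equivariant and preserves the $G$-coaction grading; the explicit formula $c^2(v\ot w)=t_1\lact v\ot s\lact w$ for $v\in V_s$, $w\in W_t$ (with $t_1:=sts^{-1}$) shows that $c^2$ permutes the components $V_s\ot W_t$ via the $G$-equivariant self-map $\tau (s,t):=(stst^{-1}s^{-1},sts^{-1})$ on $\cc g\times \cc h$, whose fixed points are precisely the commuting pairs.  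The $G$-orbits in $\cc g\times \cc h$ correspond bijectively with double cosets $G^h\backslash G/G^g$.

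Each $G$-orbit $O\subset \cc g\times \cc h$ yields a sub-object $\bigoplus _{(s,t)\in O}V_s\ot W_t$ of $V\ot W$ in $\ydG$, preserved by $\id -c^2$ (which is $G$-equivariant and sends each orbit summand to itself).  Consequently $(\ad _cV)(W)$ decomposes as a direct sum indexed by orbits, and irreducibility forces exactly one summand to be non-zero.  Any non-commuting orbit contributes a non-zero summand: $\tau $ has no fixed point on it, so the $V_s\ot W_t$-component $v\ot w$ of $(\id -c^2)(v\ot w)$ is non-zero for every $(s,t)\in O$.  This proves the ``at most one double coset'' assertion of (2), and gives $(\ad _cV)(W)\neq 0$ whenever a non-commuting double coset exists.

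To prove (1), the case $st=ts$ is trivial; assume $(s,t)\in \cc g\times \cc h$ with $st\neq ts$, so $(s,t)$ lies in the unique non-commuting orbit.  Choose $v\in V_s$, $w\in W_t$ nonzero and set $z=(\ad _cv)(w)$, which is nonzero and of coaction degree $u=st$.  By irreducibility and Schur's lemma, $u\lact z=q'\cdot z$ where $q':=q_{(\ad _cV)(W)}$; translated through the identification this becomes
\[
(\id -c^2)(st\lact v\ot st\lact w)=q'\,(\id -c^2)(v\ot w)
\]
in $V\ot W$.  Writing $V_{(a,b)}:=V_a\ot W_b$, the left-hand side has graded components in $V_{\tau (s,t)}$ and $V_{\tau ^2(s,t)}$, the right-hand side in $V_{(s,t)}$ and $V_{\tau (s,t)}$.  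The right-hand side has a nonzero $V_{(s,t)}$-component $q'(v\ot w)$, so the left-hand side must contribute there, forcing $\tau ^2(s,t)=(s,t)$; a short computation with the defining formula of $\tau $ shows this is equivalent to $(st)^2=(ts)^2$, i.e., $stst=tsts$.  For the $q$-value in (2), compare $V_{\tau (s,t)}$-components: from $st=t_1 s$ together with $s\lact v=q_Vv$, $t\lact w=q_Ww$, one computes $st\lact v=q_V(t_1\lact v)$ and $st\lact w=q_W(s\lact w)$, giving the left-side component $q_Vq_W\cdot c^2(v\ot w)$, which equated with $-q'\cdot c^2(v\ot w)$ yields $q'=-q_Vq_W$.

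The main obstacle will be the clean component matching in the last paragraph: carefully tracking which graded piece of $V\ot W$ each term lies in, and verifying the equivalence $\tau ^2(s,t)=(s,t)\Leftrightarrow (st)^2=(ts)^2$ by a direct group-theoretic computation using $s_1t_1=st$ (where $(s_1,t_1)=\tau (s,t)$) and the formula $s_1t_1s_1^{-1}=(st)^2(sts)^{-1}$.  Once these identities are in place, both (1) and (2) follow mechanically from matching the two summands on each side of the displayed equation.
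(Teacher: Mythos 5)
Your proof is correct and follows essentially the same route as the paper: identify $(\ad_c V)(W)$ with $(\id-c^2)(V\ot W)$ via Prop.~\ref{pr:Tn}, observe that the coaction-homogeneous decomposition of $V\ot W$ is permuted by $c^2$ (and by the action of $st$) according to the orbit/double-coset structure, use irreducibility to get a scalar action of $st$ on each $(\id-c^2)(v\ot w)$, and read off $(st)^2=(ts)^2$ and $q'=-q_Vq_W$ by comparing graded components. Your explicit map $\tau$ and the orbit-vs-double-coset dictionary are just a cleaner bookkeeping for the same computation the paper carries out via Eqs.~\eqref{eq:id-c2} and \eqref{eq:stact} and the decomposition $V\ot W=\oplus_{x\in X}\fie G\cdot((x\cdot V_g)\ot W_h)$.
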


\begin{proof}
  By Prop.~\ref{pr:Tn}, $(\ad _cV)(W)$ is isomorphic to $(\id -c^2)(V\ot W)$
  in $\ydG $. Let $s\in \cc g$, $t\in \cc h$, $v\in V_s$, and
  $w\in W_t$. Then
  \[ c^2(v\ot w)=c((s\cdot w)\ot v)=sts^{-1}\cdot v\ot s\cdot w. \]
  Since $s\cdot v=q_V v$ and $t\cdot w=q_W w$, we obtain that
  \begin{align}
    (\id -c^2)(v\ot w)=v\ot w-q_V^{-1}q_W^{-1} st\cdot (v\ot w).
    \label{eq:id-c2}
  \end{align}
  Since $(\ad _cV)(W)$ is irreducible,
  there exists $q\in \fie ^*$ such that
  \begin{align}
    st\cdot (\id-c^2)(v\ot w)=q(\id -c^2)(v\ot w).
    \label{eq:stact}
  \end{align}
  Comparing degrees we obtain that $\Ad{st}{t}=t$ or $\Ad{(st)^2}{t}=t$.
  Thus $(st)^2=(ts)^2$. This proves (1).

  If $X\subset G$ is a set of double coset representatives for $G^h{\setminus}
  G/G^g$, then
  \[ V\ot W=\oplus _{x\in X}\fie G\cdot ((x\cdot V_g)\ot W_h) \]
  is a decomposition into Yetter-Drinfeld modules over $G$. By
  Eq.~\eqref{eq:id-c2},
  \[ (\id -c^2)(V\ot W)=\oplus _{x\in X}\fie G\cdot (\id -c^2)
  ((x\cdot V_g)\ot W_h) \]
  is a decomposition into Yetter-Drinfeld modules over $G$.
  Let $x\in X$. Assume that $(\Ad{x}{g})h\not=h(\Ad{x}{g})$.
  Then $(\id -c^2)((x\cdot V_g)\ot W_h)\not=0$ by Eq.~\eqref{eq:id-c2}.
  Since $(\ad _cV)(W)$ is irreducible by assumption,
  Prop.~\ref{pr:Tn} implies that $(\id -c^2)(V\ot W)$ is irreducible.
  Thus $(\id -c^2)((y\cdot V_g)\ot W_h)=0$ and
  hence $(\Ad{y}{g})h=h(\Ad{y}{g})$ for all $y\in X\setminus \{x\}$.
  The claim $q_{(\ad _cV)(W)}=-q_Vq_W$ follows from Eq.~\eqref{eq:stact}
  for $s=\Ad{x}{g}$, $t=h$.
\end{proof}

%By Thms.~\ref{th:maingeneral}(3) and \ref{th:main}
%the above result applies if $\NA (V\oplus W)$ is \fd{}.

\begin{theor}\label{th:stst}
  Let $g,h\in G$ and $V=\oplus _{s\in \cc g}V_s$, $W=\oplus _{t\in \cc h}W_t$
  be irreducible objects in $\ydG $. If $\NA (V\oplus W)$ is \fd{}, then
  the following hold.

  (1) For all $s\in \cc g$ and $t\in  \cc h$, $(st)^2=(ts)^2$.

  (2) There is at most one double coset
  $G^hxG^g$
  in $G^h{\setminus }G/G^g$,
  $x\in G$,
  such that $(\Ad{x}{g})h\not=h(\Ad{x}{g})$.

  (3) For all $s\in \cc g$, $t\in \cc h$ with $st\not=ts$
  there is an irreducible object $U=\oplus _{r\in \cc{st}}U_r\in \ydG $
  satisfying $q_U=-q_Vq_W$ and $\dim \,\NA (U)<\infty $.
\end{theor}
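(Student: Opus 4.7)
The plan is to apply the machinery of Thm.~\ref{th:main} and Thm.~\ref{th:maingeneral} to the pair $M := ([V], [W]) \in \cM _2$, and then invoke Prop.~\ref{pr:appadVW} to extract the three claims from the irreducible Yetter--Drinfeld module $(\ad _c V)(W)$. First I would observe that since $\NA (V(M)) = \NA (V \oplus W)$ is finite-dimensional by assumption, Thm.~\ref{th:main} guarantees that $\Rwg (M)$ is defined, $\Wg (M)$ is finite, and $\dim \NA (N_i) < \infty $ for every $N = ([N_1],[N_2]) \in \cM _2(M)$ and $i \in \{1,2\}$.

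Next I would split the argument by the value of $\cm ^M_{12}$. If $\cm ^M_{12} = 0$, then $(\ad _c V)(W) = 0$ in $\NA (V \oplus W)$, so Prop.~\ref{pr:ad=0}(1) yields that $\cc g$ and $\cc h$ commute elementwise; in this case (1) becomes a tautology and (2), (3) are vacuous. If $\cm ^M_{12} \le -1$, then Thm.~\ref{th:maingeneral}(3) applied with $m = 1$ produces a nonzero irreducible $U := (\ad _c V)(W) \in \ydG $, and Prop.~\ref{pr:appadVW}(1),(2) immediately give (1), (2), and the scalar identity $q_U = -q_V q_W$ demanded in (3).

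To complete (3) I still need $\dim \NA (U) < \infty $ and the description of the support of $U$. For the former I would use Thm.~\ref{th:maingeneral}(2): since $\al _1 + \al _2 \in \rsys _+(M)$ by Thm.~\ref{th:maingeneral}(3), there exist $N \in \cM _2(M)$ and $i \in \{1,2\}$ with $U \simeq N_i$, whence $\dim \NA (U) = \dim \NA (N_i) < \infty $ by the first paragraph. For the support, I would exploit the isomorphism $U \simeq (\id - c^2)(V \ot W)$ of Yetter--Drinfeld modules given by Prop.~\ref{pr:Tn}: for $0 \neq v \in V_s$ and $0 \neq w \in W_t$ with $st \neq ts$, both summands of
\[
(\id - c^2)(v \ot w) = v \ot w - q_V^{-1} q_W^{-1}\, (st) \cdot (v \ot w)
\]
have $G$-degree $st$ but live in distinct $V_{s'} \ot W_{t'}$-components (because $(st)\, s\, (st)^{-1} = s$ is equivalent to $st = ts$), hence are linearly independent. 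Thus $U_{st} \neq 0$, and irreducibility of $U$ forces its support to be exactly $\cc{st}$.

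The main obstacle I anticipate is the support computation in (3): one must juggle two gradings on $V \ot W$ simultaneously --- the $\cc g \times \cc h$ bigrading and the total $G$-grading coming from the Yetter--Drinfeld structure --- and transport the $G$-grading on $U$ back through the isomorphism of Prop.~\ref{pr:Tn}. Apart from this bookkeeping, the argument is a clean assembly of the structural results already established in the paper.
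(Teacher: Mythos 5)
Your proposal is correct and follows essentially the same route as the paper: apply Thm.~\ref{th:main} to get $\Rwg(M)$ defined, $\Wg(M)$ finite, and finite-dimensionality of each $\NA(N_i)$; use Thm.~\ref{th:maingeneral}(3) to deduce that $(\ad_c V)(W)$ is zero or irreducible; read off (1) and (2) from Prop.~\ref{pr:appadVW}; and for (3) combine the nonvanishing of $(\id-c^2)(v\ot w)$ from Eq.~\eqref{eq:id-c2} (giving $\cc{st}$ as the support) with Thm.~\ref{th:maingeneral}(2) to conclude $U\simeq N_i$ for some $N\in\cM_2(M)$ and hence $\dim\NA(U)<\infty$. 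Your explicit treatment of the degenerate case $\cm^M_{12}=0$ via Prop.~\ref{pr:ad=0}(1) makes a step that the published proof leaves implicit.
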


\begin{proof}
  By Thms.~\ref{th:main} and \ref{th:maingeneral}(3), $(\ad _cV)(W)$ is either
  $0$ or irreducible.
  Then (1) and (2) follow from  Prop.~\ref{pr:appadVW}(2).

  (3) Assume that $s\in \cc g$, $t\in \cc h$ such that $st\not=ts$.
  Then $U=(\ad _cV_s)(W_t)\not=0$ by Eq.~\eqref{eq:id-c2}, hence $U=
  \oplus _{r\in \cc{st}}U_r$ is irreducible. Therefore $q_U=-q_Vq_W$ by
  Prop.~\ref{pr:appadVW}(2), and $\NA (U)$ is \fd{} by
  Thm.~\ref{th:maingeneral}
  and since $\NA (V\oplus W)$ is \fd{}.
\end{proof}

%\bibliographystyle{amsalpha}
%\bibliography{quantum}

\providecommand{\bysame}{\leavevmode\hbox to3em{\hrulefill}\thinspace}
\providecommand{\MR}{\relax\ifhmode\unskip\space\fi MR }
% \MRhref is called by the amsart/book/proc definition of \MR.
\providecommand{\MRhref}[2]{%
  \href{http://www.ams.org/mathscinet-getitem?mr=#1}{#2}
}
\providecommand{\href}[2]{#2}

\end{document}